\title[$T1$ criterions for Hardy and BMO spaces]{$T1$ criterions for generalised Calder\'on--Zygmund type operators on Hardy and BMO spaces associated to Schr\"odinger operators and applications}
\author{The Anh Bui}
\address{Department of Mathematics, 
Macquarie University, 
NSW 2109, Australia}
\email{the.bui@mq.edu.au}
\author{Ji Li}
\address{Department of Mathematics, Macquarie University, 
NSW 2109, Australia}
\email{ji.li@mq.edu.au}
\author{Fu Ken Ly}
\address{Sydney Institute of Business and Technology,   
Level 2, 11 York Street, 
NSW 2000, Australia}
\email{ken.ly@learning.sibt.nsw.edu.au}
\thanks{2010 {\it Mathematics Subject Classification}: 35J10, 42B20, 42B30, 42B35. }
\thanks{{\it Key words and phrases}: $T1$ criterion, Hardy space, BMO space, Schr\"odinger operators, reverse H\"older class, Riesz transforms, Laplace transform type multipliers}
\newcommand{\NormA}[1]{\lVert #1 \rVert}
\newcommand{\AbbsA}[1]{\lvert #1 \rvert}
\newcommand{\ContainC}[1]{\Bigl( #1 \Bigr)}
\newcommand{\RR}[1]{\mathbb{R}^{#1}} 
\newcommand{\MM}{\mathcal{M}}
\newcommand{\RH}{RH} 
\newcommand{\CMS}[1]{BMO^{#1}_L}
\newcommand{\Lip}[1]{\Lambda^{#1}}
\newcommand{\vc}{\infty}
\newcommand{\f}{\frac}
\newcommand{\lesi}{\lesssim}
\newcommand{\RRT}{\mathcal{R}_{(2)}}
\newcommand{\RT}{\mathcal{R}_{(1)}}
\DeclareMathOperator{\supp}{supp\,} 
\theoremstyle{plain}
\newtheorem{Theorem}{Theorem}[section]
\newtheorem{Lemma}[Theorem]{Lemma}
\newtheorem{Proposition}[Theorem]{Proposition}
\newtheorem{Definition}[Theorem]{Definition}
\theoremstyle{definition}
\newtheorem{Remark}[Theorem]{Remark}
\theoremstyle{remark}
\numberwithin{equation}{section}	
\def\barint{\kern4pt
\raise3.4pt\hbox{\vrule height.8pt width5pt}%
\kern-9pt 
\int}
\begin{document}

\begin{abstract}
 Suppose $L=-\Delta+V$ is a Schr\"odinger operator on $\mathbb{R}^n$ with a potential $V$ belonging to certain reverse H\"older class $RH_\sigma$ with $\sigma\geq n/2$. The main aim of this paper is to provide necessary and sufficient conditions in terms of $T1$ criteria for a generalised Calder\'on--Zygmund type operator with respect to $L$  to be bounded on Hardy spaces $H^p_L(\mathbb{R}^n)$ and on BMO type spaces BMO$_L^\alpha(\mathbb{R}^n)$ associated with $L$.  As applications, we prove the boundedness for several singular integral operators associated to $L$. Our approach is flexible enough to prove the boundedness of the Riesz transforms related to $L$  with $n/2 \leq \sigma <n$ which were investigated in \cite{MSTZ} under the stronger condition $\sigma\geq n$. Thus our results not only recover existing results in \cite{MSTZ} but also contains new results in literature.
\end{abstract}

\maketitle
\tableofcontents

\section{Introduction}

It is well-known that the $T1$ theorem plays a crucial role in the analysis of $L^2$ boundedness (and furthermore the $L^p$ boundedness) of Calderon--Zygmund singular integral operators (see \cite{DJ} and \cite[p. 590]{G}).  For the endpoint boundedness (i.e. $p=1$ and $p=\infty$), there are also analogous $T1$ criterions for Calder\'on--Zygmund operators. To be more precise, suppose $T$ is a Calder\'on--Zygmund operator (in the sequel we denote this by $T\in CZO$), then $T$ is bounded on the Hardy space $H^1(\mathbb{R}^n)$ if and only if $T^*1=0$, and bounded on the BMO space BMO$(\mathbb{R}^n)$ if and only if $T1=0$ (see for example \cite{HHL}). 

Recently, Betancor et al. \cite{BCFST} established a $T1$ criterion for Hermite--Calder\'on--Zygmund operators on the BMO space BMO$_H(\mathbb{R}^n)$ associated to the Hermite operator (also known as harmonic oscillator) $H= -\Delta + |x|^2$ in $\mathbb{R}^n$. Based on this criterion they studied systematically the boundedness of certain singular integral operators related to $H$ on BMO$_H(\mathbb{R}^n)$, such as
Riesz transforms, maximal operators related to the heat and Poisson semigroups, Littlewood--Paley $g$-functions, as well as variation operators. This $T1$
 criterion was generalised by Ma et al.  \cite{MSTZ}, where they established a $T1$ criterion for boundedness in the Campanato type spaces BMO$_L^\alpha(\mathbb{R}^n)$ of so-called $\gamma$-Schr\"odinger--Calder\'on--Zygmund operators, which are related to  
 the Schr\"odinger operator $L$ on $\mathbb{R}^n$, $n\geq3$, given by
\begin{equation}\label{L}
 L=-\Delta+V, \qquad V\in\RH_\sigma, \ \ \sigma \ge n/2. 
\end{equation}
 The expression $V \in RH_\sigma$ means that $V$ is a non-negative function that satisfies the reverse H\"older inequality 
 \begin{align}\label{RH} \bigg( {1\over |B|} \int_B V(y)^\sigma dy \bigg)^{1\over \sigma} \leq {C\over |B|} \int_B V(y)dy.   
\end{align} 
for some constant $C=C(q,V)$ and every ball $B$. 

 As applications, they obtained  regularity estimates for certain operators related to $L$ such as the maximal operators and square functions of the heat and Poisson semigroups, for Laplace transform type multipliers,  for negative powers $L^{-\gamma/2}$. Moreover, on restricting  $\sigma\ge n$, they obtained regularity estimates for the Riesz transforms $\nabla L^{-1/2}$.

Shen \cite{Sh} proved that when $\sigma\ge n$, the Riesz transforms $\nabla L^{-1/2}$ are Calder\'on--Zygmund operators. However, this may not be true when $n/2\leq\sigma <n$ because pointwise estimates on the kernel of $\nabla L^{-1/2}$ are not available. But certain weaker estimates related to the standard H\"ormander condition 
\begin{align}\label{Hormander}
\int_{|x-y|>\delta |y-\overline{y}|}|K(x,y)-K(x,\overline{y})|dx\leq C
\end{align}
have been derived in \cite{BHS,GLP}, for some $C>0$ and $\delta>1$ and every $y,\overline{y}\in \mathbb{R}^n$, . 

The aim of this article is to provide necessary and sufficient conditions for a larger class of generalised Calder\'on--Zygmund type operator $T$ to be bounded on $H^p_L(\mathbb{R}^n)$, where $L=-\Delta+V$ is a Schr\"odinger operator with $V \in RH_\sigma$ for some $\sigma\geq n/2$. The conditions are phrased as conditions on the object $T^*1$. As a consequence we also obtain the criterion for such operators $T$ to be bounded on $BMO_L^\alpha(\mathbb{R}^n)$, with conditions phrased on $T1$. We would like to describe briefly our contributions in this paper.

\begin{enumerate}[{\rm (i)}]
	\item Unlike \cite{MSTZ}, we do not assume pointwise and smoothness conditions on the associated kernel of our generalised Calder\'on--Zygmund type operators $T$. This allows us to relax the condition $ \sigma\geq n$ when considering the Riesz transforms $\nabla L^{-1/2}$,  and also allows us to consider such operators as $V^{1/2}L^{-1/2}$ and $VL^{-1}$. 
\item Our results recover those in \cite{BCFST} for the Hermite--Calder\'on--Zygmund operators,  and those in \cite{MSTZ} for their $\gamma$-Schr\"odinger--Calder\'on--Zygmund operators $T$ when $\gamma=0$.

\item The result  for boundedness on Hardy spaces (Theorem \ref{Th: CZO}) is new in the literature.

\item To prove the boundedness on Hardy spaces, we introduce an $L$-molecule satisfying size and weak cancellation condition, which is different from the $L$-molecules in the direction of work in \cite{ADM,DY,HLMMY}. Then we establish the molecular characterization of Hardy spaces.
\end{enumerate}

\subsection{Main results}

In the sequel we set $L$ as in \eqref{L}. 

The critical radius function (introduced by Shen \cite{Sh}) associated to the potential $V\in RH_\sigma$ with $\sigma\geq n/2$ is defined by
\begin{equation}\label{criticalfunction}
\rho(x)=\sup\Big\{r>0: \f{1}{r^{n-2}}\int_{B(x,r)}V(y)dy\leq 1\Big\}.
\end{equation}
As an example for the harmonic oscillator with $V(x)=|x|^2$, we have $\rho(x)\sim (1+|x|)^{-1}$. 

We also set $\sigma_0:=2-n/\sigma$, a constant which will play a key role in this article. Note that $0<\sigma_0\le 1$ precisely when $\f{n}{2}<\sigma \le n$.

We now introduce generalized Calder\'on--Zygmund type operators with respect to $L$ defined in \eqref{L} as follows.
\begin{Definition}\label{def: GCZO}
	Let $\gamma>0$, $1<\theta<\vc$ and $\theta'$ be the conjugate of $\theta$. We say that $T\in GCZK_\rho(\gamma,\theta)$ if $T$ has an associated kernel $K(x,y)$ satisfying the following estimates:
\begin{enumerate}[$(i)$]
\item For each $N>0$ there is a constant $C_N>0$ such that
	\begin{equation}\label{cond1}
	\ContainC{\int_{R<|x-x_B|<2R}\AbbsA{K(x,y)}^\theta dx}^{1/\theta}\le C_N R^{-n/\theta'}\ContainC{\frac{\rho(x_B)}{R}}^{N}
	\end{equation}
for all $y\in B(x_B,\rho(x_B))$ and all $R>2\rho(x_B)$.

\item There is a constant $C>0$ such that
    \begin{equation}\label{cond2}
		 \ContainC{\int_{2^kr_B<|x-x_B|<2^{k+1}r_B}\big|K(x,y)-K(x,x_B)\big|^\theta dx}^{1/\theta} \le C2^{-k\gamma}|2^kB|^{-1/\theta'}
	\end{equation}
	for all balls $B=B(x_B,r_B)$, all $y\in B$ and $k\geq 1$.

\end{enumerate}
We say that $T\in GCZO_\rho(\gamma,\theta)$ if $T\in GCZK_\rho(\gamma,\theta)$ and $T$ is bounded on $L^\theta(\RR{n})$.
\end{Definition} 
Note that the condition \eqref{cond2} implies the standard H\"omander condition \eqref{Hormander}, and
therefore, if $T\in GCZO_\rho(\gamma,\theta)$ for some $\gamma$ and $\theta$, then $T$ is of weak type $(1,1)$ and hence is bounded on $L^p$ for all $1<p\leq \theta$. 

We point out that the Hermite--Calder\'on--Zygmund operators of \cite{BCFST}  and the $\gamma$-Schr\"odinger--Calder\'on--Zygmund operators $T$ when $\gamma=0$ of \cite{MSTZ} belong to $GCZO_\rho(\delta,\theta)$ for certain $\delta$ and any $1<\theta<\infty$.

It is well known that in the classical situation (see \cite{HHL} for example) if  $T\in CZO$ then $T$ is bounded on the Lipschitz $\Lip{\alpha}$  for $0<\alpha <\gamma \le 1$ if and only if $T1$ is constant (we note that the Lipschitz spaces $\Lip{\alpha}$ coincide with the Campanato spaces $BMO^\alpha$ \cite{Cam}). However,
for Calder\'on--Zygmund type operators $T$ with respect to  Schrodinger operators $L$, there exist certain operators $T$ for which $T1$ or $T^*1$ is non-constant. Notable examples are the Riesz transforms $T=\nabla L^{-1/2}$.

Our main result is the following $T1$ type theorem for  $T\in GCZO_\rho(\gamma,\theta)$ to be bounded on Hardy spaces $H^p_L(\mathbb{R}^n)$ associated with $L$ defined in \eqref{L}. For the precise definition and the properties of  $H^p_L(\mathbb{R}^n)$ we refer to Section \ref{sec: Hardy}.
\begin{Theorem}\label{Th: CZO}
Let $T\in GCZO_\rho(\gamma,\theta)$ for some $0<\gamma<\sigma_0$, where $\sigma_0:=2-n/\sigma$. Then: 

\smallskip

\begin{enumerate}[\upshape(a)]
	\item $T$ is bounded on $H^1_L(\mathbb{R}^n)$ if and only if $T^*1$ satisfies
		\begin{align*}
		\log\ContainC{\frac{\rho(x_B)}{r_B}}\Big(\f{1}{|B|}\int_B\AbbsA{T^*1(y)-(T^*1)_B}^{\theta'}\,dy\Big)^{1/\theta'} \le C
	\end{align*}
	for every ball $B$ with $r_B\le\tfrac{1}{2}\rho(x_B)$.

\smallskip

	\item If $\frac{n}{n+\gamma}<p<1$, then
$T$ is bounded on $H^p_L(\mathbb{R}^n)$ if and only if $T^*1$ satisfies
\begin{align*}
		\ContainC{\frac{\rho(x_B)}{r_B}}^{n(1/p-1)}\Big(\f{1}{|B|}\int_B\AbbsA{T^*1(y)-(T^*1)_B}^{\theta'}\,dy\Big)^{1/\theta'} \le C
	\end{align*}
	for every ball $B$ with $r_B\le\tfrac{1}{2}\rho(x_B)$.

\smallskip

	\item 	If $\frac{n}{n+\gamma}<p\le 1$, then $T$ is bounded from $H^p_L(\mathbb{R}^n)$ to the classical Hardy space $H^p(\mathbb{R}^n)$ if and only if $T^*1=0$.
	\end{enumerate}
\end{Theorem}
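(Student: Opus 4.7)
The plan is to prove all three parts simultaneously via an atom-to-molecule scheme combined with a new molecular characterization of $H^p_L(\mathbb{R}^n)$. The hypothesis on $T^*1$ will translate, via a duality pairing, into a precise bound on the total integral of $Ta$ when $a$ is an $H^p_L$-atom, and this bound is exactly the weak cancellation that defines the molecule class.

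First I would introduce a $(p,\theta,\varepsilon)$-$L$-molecule adapted to a ball $B=B(x_B,r_B)$: a function $m$ obeying the annular size bound
\[
\|m\|_{L^\theta(U_k(B))}\lesssim 2^{-k\varepsilon}|2^kB|^{1/\theta-1/p}
\]
for $k\ge 0$, with $U_k(B)=2^{k+1}B\setminus 2^kB$ for $k\ge 1$ and $U_0(B)=B$, together with the weak cancellation
\[
\Bigl|\int m\,dx\Bigr|\le \Phi(B)\,|B|^{1-1/p},
\]
where $\Phi(B)$ is the reciprocal of the weight in the hypothesis, namely $\Phi(B)=1/\log(\rho(x_B)/r_B)$ for (a), $(r_B/\rho(x_B))^{n(1/p-1)}$ for (b), and $\Phi(B)=0$ for (c); no cancellation is imposed when $r_B>\tfrac12\rho(x_B)$. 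The first technical step is the molecular characterization: every $f=\sum_j\lambda_j m_j$ with $\bigl(\sum|\lambda_j|^p\bigr)^{1/p}<\infty$ lies in $H^p_L$ with controlled norm. The proof decomposes each molecule as $m=(m-\bar m\,\mathbf{1}_B)+\bar m\,\mathbf{1}_B$ with $\bar m=|B|^{-1}\int m$; the first summand is an ordinary zero-mean molecule handled as in the classical reduction, while $\bar m\,\mathbf{1}_B$ is split via a telescoping $\mathbf{1}_B=\phi_K+\sum_{k<K}(\phi_k-\phi_{k+1})$ along the chain $B\subset 2B\subset\cdots\subset B(x_B,\rho(x_B))$, with $\phi_k=(|B|/|2^kB|)\mathbf{1}_{2^kB}$ and $K\sim\log_2(\rho(x_B)/r_B)$, producing one $H^p_L$-atom at the critical scale plus $K$ atoms with cancellation whose total $\ell^p$ cost is precisely $1/\Phi(B)$.

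For sufficiency I would verify, uniformly over $H^p_L$-atoms $a$ supported in $B$, that $Ta$ is a scalar multiple of such an $L$-molecule. The annular size bounds split into two regimes: when $2^kr_B\le \rho(x_B)$, the vanishing moment $\int a=0$ combined with \eqref{cond2} yields $\|Ta\|_{L^\theta(U_k(B))}\lesssim 2^{-k\gamma}|2^kB|^{1/\theta-1/p}$; when $2^kr_B>\rho(x_B)$, we cover $U_k(B)$ by critical balls and apply \eqref{cond1} together with the $L^\theta$-boundedness of $T$ to obtain decay $(\rho(x_B)/2^kr_B)^N$ for arbitrarily large $N$. For the weak cancellation I would write
\[
\int Ta\,dx=\langle T^*1,a\rangle=\int_B \bigl(T^*1-(T^*1)_B\bigr)\,a\,dx,
\]
the first identity by a standard truncation at the $L^\theta$-level and the second by $\int a=0$; H\"older's inequality against the hypothesis on $T^*1$ then delivers exactly $\Phi(B)|B|^{1-1/p}$.

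For necessity I would invoke duality: the dual of $H^p_L$ is a Campanato-type space whose seminorm coincides with the left-hand side of the stated inequality (a weighted BMO with logarithmic factor at $p=1$ and a weighted Campanato of order $n(1/p-1)$ for $p<1$). Boundedness of $T$ on $H^p_L$ forces $T^*$ bounded on this dual, and applied to the constant function it produces the stated estimate on $T^*1$. Part (c) is the limit case: mapping $H^p_L$ to the classical $H^p$ is equivalent to $\int Ta=0$ for every $(H^p_L,\infty)$-atom, which forces the weak cancellation to vanish identically, i.e.\ $T^*1=0$ in the distributional sense. The principal obstacle is the molecular characterization in the first step: the relaxed cancellation requires the telescoping from $B$ up to $B(x_B,\rho(x_B))$ to be arranged so that the growth factor $1/\Phi(B)$ is absorbed into a convergent $\ell^p$ sum of atomic coefficients, and it is here that the three precise forms of $\Phi(B)$ in parts (a), (b), (c) arise, matching the thresholds $p=1$, $n/(n+\gamma)<p<1$, and the passage to classical $H^p$.
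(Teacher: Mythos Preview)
Your sufficiency (``if'') direction is essentially the paper's strategy: map $(p,\theta)_L$-atoms to molecules, with the molecule's weak cancellation coming from the pairing $\int Ta=\int a\,T^*1=\int_B(T^*1-(T^*1)_B)\,a$ and H\"older against the hypothesis on $T^*1$. Your telescoping proof of the molecular characterization (decompose $m$ into a zero-mean molecule plus $\bar m\mathbf{1}_B$, then telescope $\mathbf{1}_B$ up to the critical ball) is a legitimate and somewhat more direct alternative to the paper's route, which for $p<1$ argues via pointwise maximal-function estimates and for $p=1$ invokes the predual $CMO_L$ of $H^1_L$.

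The necessity (``only if'') direction, however, has a genuine gap. You assert that the dual of $H^p_L$ is a Campanato-type space ``whose seminorm coincides with the left-hand side of the stated inequality.'' This is false: the dual of $H^p_L$ is the \emph{unweighted} space $BMO_L^{\alpha}$ with $\alpha=n(1/p-1)$, whose seminorm controls the oscillation $\bigl(\fint_B|f-f_B|^{\theta'}\bigr)^{1/\theta'}$ by $Cr_B^\alpha$, not by $C(r_B/\rho_B)^\alpha$. Knowing only that $T^*1\in BMO_L^\alpha$ therefore does not deliver the sharper weighted bound the theorem asserts. The paper does \emph{not} simply apply $T^*$ to the constant function. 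Instead, for each ball $B=B(x_0,s)$ with $s\le\tfrac12\rho(x_0)$ it constructs an explicit test function $g_{x_0,s}$ supported in $B(x_0,\rho(x_0))$ with $\|g_{x_0,s}\|_{BMO_L^\alpha}\lesssim 1$ and average $(g_{x_0,s})_B$ equal to the weight factor (namely $\log(\rho(x_0)/s)$ when $p=1$, and roughly $\rho(x_0)^\alpha$ when $p<1$). One then writes $(g_{x_0,s})_B\cdot T^*1=T^*g_{x_0,s}-T^*g_1-T^*g_2$ via the splitting $g_{x_0,s}=g_1+g_2+(g_{x_0,s})_B$ with $g_1=(g-g_B)\chi_{4B}$ and $g_2=(g-g_B)\chi_{(4B)^c}$; the $BMO_L^\alpha$-boundedness of $T^*$ controls the first piece, $L^{\theta'}$-boundedness the second, and condition \eqref{cond2} the third. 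This maneuver is precisely what converts boundedness of $T^*$ on the unweighted dual into the weighted oscillation bound on $T^*1$, and it is the key idea missing from your proposal.
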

\noindent Note that for $\f{n}{n+\sigma_0\wedge 1}<p\le 1$, the cancellation condition for atoms in Definition \ref{def: atoms} imply that the classical Hardy spaces $H^p(\mathbb{R}^n)$ are strictly contained in $H^p_L(\mathbb{R}^n)$, and thus Theorem \ref{Th: CZO} also gives boundedness from $H^p(\mathbb{R}^n)$ into $H^p_L(\RR{n})$ for (a), (b), and into $H^p(\RR{n})$ for (c).

The strategy of our proof of Theorem \ref{Th: CZO} proceeds in two steps. We firstly characterize $H^p_L(\RR{n})$ in terms of molecules associated with $L$ that have certain size and cancellation conditions (different to the $L$-molecules in the direction of work in \cite{ADM,DY,HLMMY}). See Definitions \ref{defn3.1} and \ref{defn3.2}. Secondly we show that the operators  satisfying the conditions in Theorem \ref{Th: CZO} map atoms into molecules, which yields their boundedness on $H^p_L(\RR{n})$.

As a consequence of Theorem \ref{Th: CZO} and the duality of the Hardy space $H_L^p(\mathbb{R}^n)$ with BMO type spaces (also known as  the Campanato space) $BMO_L^\alpha(\mathbb{R}^n)$, we obtain directly a $T1$ criterion for  $BMO_L^\alpha(\mathbb{R}^n)$  which extends the results of \cite{BCFST,MSTZ} to a more general setting. For the precise definition and the properties of  $BMO_L^\alpha(\mathbb{R}^n)$ we refer to Section \ref{sec: BMO}.

\begin{Definition}
	Let $\gamma>0$, $1<\theta<\vc$ and $\theta'$ be the conjugate of $\theta$. We say that $T\in GCZK^*_\rho(\gamma,\theta')$ if $T$ has an associated kernel $K(x,y)$ satisfying the following estimates:
	\begin{enumerate}[$(i)'$]
		\item For each $N>0$ there is a constant $C_N>0$ such that
		\begin{equation}\label{cond3}
		\ContainC{\int_{R<|y-x_B|<2R}\AbbsA{K(x,y)}^{\theta'} dy}^{1/\theta'}\le C_N R^{-n/\theta}\ContainC{\frac{\rho(x_B)}{R}}^{N}
		\end{equation}
		for all $x\in B(x_B,\rho(x_B))$ and all $R>2\rho(x_B)$.
		
		\item There are constants $0<\gamma\leq 1$ and $C>0$ such that
		\begin{equation}\label{cond4}
		\ContainC{\int_{2^kr_B<|y-x_B|<2^{k+1}r_B}\big|K(x,y)-K(x_B,y)\big|^{\theta'} dy}^{1/\theta'} \le C2^{-k\gamma}|2^kB|^{-1/\theta}
		\end{equation}
		for all balls $B=B(x_B,r_B)$, all $x\in B$ and $k\geq 1$.
	\end{enumerate}
We say that $T\in GCZO^*_\rho(\gamma,\theta')$ if $T\in GCZK^*_\rho(\gamma,\theta')$ and $T$ is bounded on $L^{\theta'}(\RR{n})$. \end{Definition}
We wish to make two observations. Firstly, whereas  Definition \ref{def: GCZO} specifies a certain regularity in the second variable, the requirement here is in the first variable.   Secondly  if $T$ belongs to $GCZO^*_\rho(\gamma,\theta')$ for some $\gamma$ and $\theta$, then $T$ is automatically bounded on $L^p$ for all $\theta'\le p <\infty$.

\begin{Theorem}\label{thm2}

Let $T\in GCZO^*_\rho(\gamma,\theta')$ for some $0<\gamma<\sigma_0$, where $\sigma_0:=2-n/\sigma$. Then:

\begin{enumerate}[\upshape(a)$'$]
	\item $T$ is bounded on $BMO_L(\mathbb{R}^n)$ if and only if $T1$ satisfies
	\begin{align*}
	\log\ContainC{\frac{\rho(x_B)}{r_B}}\Big(\f{1}{|B|}\int_B\AbbsA{T1(y)-(T1)_B}^{\theta}\,dy\Big)^{1/\theta} \le C
	\end{align*}
	for every ball $B$ with $r_B\le\tfrac{1}{2}\rho(x_B)$.
	\item If $0<\alpha<\gamma$, then
	$T$ is bounded on $BMO_L^\alpha(\mathbb{R}^n)$ if and only if $T1$ satisfies
	\begin{align*}
	\ContainC{\frac{\rho(x_B)}{r_B}}^{\alpha}\Big(\f{1}{|B|}\int_B\AbbsA{T1(y)-(T1)_B}^{\theta}\,dy\Big)^{1/\theta} \le C
	\end{align*}
	for every ball $B$ with $r_B\le\tfrac{1}{2}\rho(x_B)$.
	\item 	If $0<\alpha<\gamma$ then $T$ is bounded from $BMO^\alpha(\mathbb{R}^n)$ into $BMO_L^\alpha(\mathbb{R}^n)$ if and only if $T1=0$.
\end{enumerate}

\end{Theorem}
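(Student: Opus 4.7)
The plan is to derive Theorem \ref{thm2} directly from Theorem \ref{Th: CZO} via duality, as hinted in the text just before the statement. The workflow is in three steps.

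\textbf{Step 1: Identify the adjoint class.} I would first verify that if $T\in GCZO^*_\rho(\gamma,\theta')$ with kernel $K(x,y)$, then the formal adjoint $T^*$ lies in $GCZO_\rho(\gamma,\theta)$. Its kernel is $K^*(x,y)=K(y,x)$, so conditions \eqref{cond3} and \eqref{cond4} on $K$ (regularity/decay in the first variable) become exactly \eqref{cond1} and \eqref{cond2} for $K^*$ (regularity/decay in the second variable), after relabelling. The $L^{\theta'}$-boundedness of $T$ is equivalent by duality to $L^\theta$-boundedness of $T^*$.

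\textbf{Step 2: Translate boundedness via Hardy--BMO duality.} I would then invoke the duality $(H^p_L(\mathbb{R}^n))^*=BMO_L^\alpha(\mathbb{R}^n)$, with $\alpha=n(1/p-1)$ (for $\alpha=0$ the case is $p=1$), established in Section \ref{sec: BMO}. By a standard transposition argument, $T:BMO_L^\alpha\to BMO_L^\alpha$ is bounded if and only if $T^*:H^p_L\to H^p_L$ is bounded; similarly $T:BMO^\alpha\to BMO_L^\alpha$ is bounded iff $T^*:H^p_L\to H^p$ is bounded. For part (a)$'$ this pairs $BMO_L$ with $H^1_L$; for part (b)$'$ one chooses $p=\frac{n}{n+\alpha}$, so that $\alpha<\gamma$ is equivalent to $p>\frac{n}{n+\gamma}$, exactly matching the hypothesis of Theorem \ref{Th: CZO}(b); part (c)$'$ pairs identically with Theorem \ref{Th: CZO}(c).

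\textbf{Step 3: Apply Theorem \ref{Th: CZO} to $T^*$.} Since $(T^*)^*=T$, the condition on $(T^*)^*1=T1$ appearing in Theorem \ref{Th: CZO} becomes the condition on $T1$ in Theorem \ref{thm2}. The exponents also match: the Lebesgue exponent $\theta'$ in Theorem \ref{Th: CZO} applied to $T^*\in GCZO_\rho(\gamma,\theta)$ gives precisely $\theta$ in Theorem \ref{thm2}, and the weights $\log(\rho(x_B)/r_B)$, $(\rho(x_B)/r_B)^{n(1/p-1)}=(\rho(x_B)/r_B)^\alpha$ carry over verbatim.

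\textbf{Main obstacle.} The delicate point will be the rigorous justification of Step 2, since $T$ must act on elements such as $1\in BMO_L^\alpha$ which are not in $L^{\theta'}$, so $T1$ only makes sense through the duality pairing $\langle T1,g\rangle:=\langle 1,T^*g\rangle$ with $g$ an $H^p_L$-atom (or molecule). To make the equivalence ``$T$ bounded on $BMO_L^\alpha$ $\Leftrightarrow$ $T^*$ bounded on $H^p_L$'' precise, one needs a dense class of admissible test functions on which $\langle Tf,g\rangle=\langle f,T^*g\rangle$ holds unambiguously; a natural choice is to take $f\in L^{\theta'}$ with compact support (where the identity is justified by the $L^{\theta'}$-boundedness of $T$) and then pass to the limit in $BMO_L^\alpha$, using the molecular characterisation of $H^p_L$ from Step 1 of Theorem \ref{Th: CZO}'s proof to control the dual pairing against atoms/molecules. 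Once this is in place, the three parts of Theorem \ref{thm2} follow immediately from the corresponding parts of Theorem \ref{Th: CZO}.
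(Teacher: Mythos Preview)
Your proposal is correct and follows essentially the same approach as the paper: observe that $T\in GCZO^*_\rho(\gamma,\theta')$ implies $T^*\in GCZO_\rho(\gamma,\theta)$, then invoke Theorem~\ref{Th: CZO} together with Hardy--BMO duality. The paper's proof is even terser than yours, simply noting that the `if' directions follow from Theorem~\ref{Th: CZO} by duality, while the `only if' directions are essentially contained in the `only if' arguments already given in the proof of Theorem~\ref{Th: CZO} (with $T$ in place of $T^*$).
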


\subsection{Applications}
We now present some applications to singular integrals related to $L$. The precise definitions of the listed operators  will be provided in Section \ref{sec: misc ops}. 

\begin{Theorem}\label{thm3 appl1}
For $ {n\over n+\sigma_0\wedge 1}<p\leq 1$,
  the Laplace transform type multipliers $m(L)$ are bounded on $H^p_L(\mathbb{R}^n)$. As a consequence, for $0\leq \alpha< \sigma_0\wedge 1$, these operators are bounded on   $BMO^\alpha_L(\mathbb{R}^n)$.
\end{Theorem}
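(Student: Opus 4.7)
The plan is to invoke the $T1$ criterion in Theorem \ref{Th: CZO} with $T=m(L)$. Two things must be verified: first, that $m(L)\in GCZO_\rho(\gamma,\theta)$ for some $0<\gamma<\sigma_0\wedge 1$ and some $1<\theta<\infty$; second, that $m(L)^*1$ satisfies the oscillation condition of Theorem \ref{Th: CZO}(a) (for $p=1$) and (b) (for $\tfrac{n}{n+\sigma_0\wedge 1}<p<1$).

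A Laplace transform type multiplier has the form $m(L)f = L\int_0^\infty \phi(s)e^{-sL}f\,ds$ for some $\phi\in L^\infty([0,\infty))$, so its integral kernel is
\begin{align*}
K(x,y) = -\int_0^\infty \phi(s)\,\partial_s p_s(x,y)\,ds,
\end{align*}
where $p_s$ is the heat kernel of $L$. Verifying \eqref{cond1} and \eqref{cond2} is standard from three ingredients: the Gaussian bound $|s\partial_s p_s(x,y)|\lesi s^{-n/2}e^{-c|x-y|^2/s}$; the H\"older regularity $|p_s(x,y)-p_s(x,y')|\lesi (|y-y'|/\sqrt{s})^\delta s^{-n/2}e^{-c|x-y|^2/s}$ for any $\delta<\sigma_0\wedge 1$; and the improved decay $|p_s(x,y)|\lesi (1+\sqrt{s}/\rho(x))^{-N} s^{-n/2}e^{-c|x-y|^2/s}$ for every $N>0$, a consequence of $V\in RH_\sigma$. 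One obtains \eqref{cond2} by splitting the $s$-integral at $s\sim|x-y|^2$, and \eqref{cond1} by further splitting at $s\sim \rho(x_B)^2$. Combined with the $L^2$ boundedness of $m(L)$ (via the spectral theorem), this shows $m(L)\in GCZO_\rho(\gamma,\theta)$ for all $0<\gamma<\sigma_0\wedge 1$ and all $1<\theta<\infty$.

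For the second step, the self-adjointness of $L$ gives $m(L)^*=\overline{m}(L)$, and so it suffices to control the oscillation of $m(L)1$ on any ball $B=B(x_B,r_B)$ with $r_B\le\tfrac{1}{2}\rho(x_B)$. I would split the defining $s$-integral into the three regimes $0<s\le r_B^2$, $r_B^2<s\le\rho(x_B)^2$ and $s>\rho(x_B)^2$, using in the first the H\"older estimate $|e^{-sL}1(y)-e^{-sL}1(x_B)|\lesi(r_B/\sqrt{s})^\delta$, and in the third the decay $\|e^{-sL}1\|_\infty\lesi (\sqrt{s}/\rho(x_B))^{-N}$ that is available once $\sqrt{s}\gtrsim\rho(x_B)$. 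The intermediate regime is the main obstacle: neither smoothness nor decay alone yields a summable bound and the two must be traded carefully. This trade-off is what produces the $\log(\rho(x_B)/r_B)$ weight in Theorem \ref{Th: CZO}(a); using H\"older regularity of order $\alpha$ in that regime instead produces the $(\rho(x_B)/r_B)^\alpha$ weight, matching (b) with $\alpha = n(1/p-1)$.

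Once the Hardy-space part is established for the full range $\tfrac{n}{n+\sigma_0\wedge 1}<p\le 1$, the $BMO_L^\alpha$-boundedness follows by duality: the pairing $(H^p_L)^*=BMO_L^{n(1/p-1)}$ recalled in Section \ref{sec: BMO}, combined with the self-adjointness $m(L)^{**}=m(L)$, transfers boundedness on $H^p_L(\mathbb{R}^n)$ into boundedness on $BMO_L^\alpha(\mathbb{R}^n)$ for every $\alpha = n(1/p-1) \in [0,\sigma_0\wedge 1)$.
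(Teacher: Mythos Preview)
Your proposal is correct and follows essentially the same route as the paper: apply Theorem~\ref{Th: CZO} to $T=m(L)$, verify $m(L)\in GCZO_\rho(\gamma,\theta)$ from the heat-kernel estimates, use $m(L)^*=\overline{m}(L)$, and check the oscillation condition on $m(L)1$ over balls $B$ with $r_B\le\tfrac12\rho_B$; the paper simply outsources the kernel bounds and the pointwise estimate $|m(L)1(y)-m(L)1(z)|\lesssim (r_B/\rho_B)^\delta\log(\rho_B/r_B)$ to \cite{MSTZ} (Proposition~4.11 there), which is exactly the three-regime $s$-splitting you describe. The $BMO_L^\alpha$ consequence via duality is identical.
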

\noindent We point out that the above result recovers the $BMO^\alpha_L$ result in Theorem 1.3 in \cite{MSTZ}, while the hardy space result is new. We also mention that using the vector-valued approach in \cite{MSTZ}, we can also apply Theorem \ref{thm2} to recover boundedness on $BMO^\alpha_L$ of the other operators listed in \cite{MSTZ} Theorem 1.3, namely the maximal operators and Littlewood--Paley $g$-functions associated with the heat and Poisson semigroups. 

\bigskip

Next we have the following result for the Riesz transforms $\RT=\nabla L^{-1/2}$ and $\RRT = \nabla^2 L^{-1}$.
 \begin{Theorem}\label{thm4 appl2}
The Riesz transforms $\RT$ and $\RRT$ are bounded from $H^p_L(\mathbb{R}^n)$ into $H^p(\mathbb{R}^n)$ for all $\f{n}{n+\sigma_0\wedge 1}<p\leq 1$. As a consequence $\RT^*$ and $\RRT^*$ are bounded from $BMO^\alpha(\mathbb{R}^n)$ to $BMO^\alpha_L(\mathbb{R}^n)$ for $0\leq \alpha<\sigma_0\wedge 1$.
\end{Theorem}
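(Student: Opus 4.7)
The plan is to reduce everything to Theorem \ref{Th: CZO}(c). Specifically, we verify that each component of $\RT$ and $\RRT$ belongs to $GCZO_\rho(\gamma,\theta)$ for some $\theta\in(1,\infty)$ and every $\gamma\in(0,\sigma_0\wedge 1)$, together with the cancellation $\RT^*1=\RRT^*1=0$; Theorem \ref{Th: CZO}(c) then delivers the Hardy-space claim directly. The $BMO$ statement is then obtained by dualizing, using the identifications $H^p(\mathbb{R}^n)^*=BMO^\alpha(\mathbb{R}^n)$ and $H^p_L(\mathbb{R}^n)^*=BMO^\alpha_L(\mathbb{R}^n)$ with $\alpha=n(1/p-1)$, which converts boundedness $H^p_L\to H^p$ into boundedness $BMO^\alpha\to BMO^\alpha_L$ for the adjoints.

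For the class membership we would use the subordination formulas
\[
\RT \;=\; c\int_0^\infty \nabla e^{-tL}\,\frac{dt}{\sqrt t}, \qquad \RRT \;=\; \int_0^\infty \nabla^2 e^{-tL}\,dt,
\]
combined with the H\"older regularity and off-diagonal (Gaussian-type) decay of the kernels of $\nabla e^{-tL}$ and $\nabla^2 e^{-tL}$ known for $V\in RH_\sigma$; see \cite{BHS,GLP}. The $L^\theta$-boundedness of $\RT$ and $\RRT$ for $\theta$ in a suitable range depending on $\sigma$ is due to Shen \cite{Sh} and its refinements. Condition \eqref{cond1} is to be extracted from the extra decay supplied by the perturbed semigroup outside the critical ball $B(x_B,\rho(x_B))$, which yields arbitrary polynomial decay in $\rho(x_B)/R$. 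Condition \eqref{cond2} is obtained by transferring the H\"older smoothness of order $\gamma<\sigma_0\wedge 1$ of the heat-kernel gradients through the subordination integral in $t$ and repackaging the resulting $L^\theta$-Lipschitz information into $L^\theta$-averages on dyadic annuli.

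The cancellation $\RT^*1=0$ and $\RRT^*1=0$ reflects the divergence structure of these operators: formally $\RT^*=-L^{-1/2}\operatorname{div}$ and $(\partial_i\partial_j L^{-1})^*=\partial_i\partial_j L^{-1}$, both of which annihilate constants. Rigorously, it suffices to check that $\int \RT a=0$ and $\int \RRT a=0$ for every $H^p_L$-atom $a$, which is achieved through the sufficient decay of $L^{-1/2}a$ and $L^{-1}a$ at infinity (a consequence of the atomic cancellation and the semigroup estimates) combined with integration by parts.

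The principal obstacle is the verification of \eqref{cond2} in the regime $n/2\leq\sigma<n$: pointwise gradient bounds on the kernel of $\RT$ and $\RRT$ are not available here, so one must work with the $L^\theta$-integrated kernel estimates on $\nabla e^{-tL}$ and $\nabla^2 e^{-tL}$ from \cite{BHS,GLP}. The technical core of the argument lies in propagating these estimates through the subordination integrals in $t$ and extracting from them the $L^\theta$-average regularity in precisely the form demanded by Definition \ref{def: GCZO}; once this packaging is in place, application of Theorem \ref{Th: CZO}(c) and the duality step are essentially automatic.
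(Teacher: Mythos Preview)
Your proposal is correct and follows essentially the same route as the paper: verify that $\RT,\RRT\in GCZO_\rho(\gamma,\theta)$ for appropriate $\theta$ and all $\gamma<\sigma_0\wedge 1$, note that $\RT^*1=\RRT^*1=0$, and invoke Theorem~\ref{Th: CZO}(c); the $BMO$ claim then follows by duality. The paper's execution differs only in details: for $\RT$ it takes $\theta=2$ and cites \eqref{cond1} and \eqref{cond2} directly from \cite[Lemma~7]{BHS} and \cite{GLP} rather than re-deriving them through subordination, while for $\RRT$ it takes $\theta=\sigma$ (using Shen's $L^\sigma$ bound) and establishes the needed $L^\theta$-kernel estimates for $\nabla^2 p_t$ in a separate proposition, then runs the same $t$-integral computation you outline. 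Your justification of the cancellation via $\int \RT a=0$ for atoms is more elaborate than necessary---the paper simply observes that $\RT^*=-L^{-1/2}\mathrm{div}$ and $\RRT^*=L^{-1}\nabla^2$ annihilate constants directly (note your formula $(\partial_i\partial_j L^{-1})^*=\partial_i\partial_j L^{-1}$ should read $L^{-1}\partial_i\partial_j$, though this does not affect the conclusion).
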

\noindent The results in Theorem \ref{thm4 appl2} are not new. Indeed it is known that both $\RT$ and $\RRT$ are bounded from $H^p_L$ into $L^p$ for all $0<p\le 1$ and from $H^p_L$ into $H^p$ for all $\f{n}{n+1}<p\le 1$ (see \cite{HLMMY,JY,FKL}).

\bigskip

We also apply our results to Riesz transforms induced by the potential $V$ such as $V^{1/2}L^{-1/2}$ and $VL^{-1}$, which  were earlier  shown by Shen \cite{Sh} to be $L^p$-bounded for $1\le p \le 2\sigma$ and $1\le p\le \sigma$ respectively. 
While such operators are not of Calder\'on--Zygmund type, we will see that they nonetheless fall into the scope of Theorems \ref{Th: CZO} and \ref{thm2}.

In fact we shall  consider their generalizations $V^sL^{-s}$, for $0<s\le 1$,  which are   $L^p$ bounded for $1<p<\f{\sigma}{s}$ (see \cite{Su}). 
 \begin{Theorem}\label{thm5 appl3}
For each $0<s\le 1$ the operators $V^s L^{-s}$ are bounded on $H^p_L(\mathbb{R}^n)$ for each $\f{n}{n+s\sigma_0\wedge 1}<p\le 1$. 
	As a consequence the operators $(V^sL^{-s})^*$ are bounded on $BMO_L^\alpha$ for each $0\le \alpha< s\sigma_0\wedge 1$.

\end{Theorem}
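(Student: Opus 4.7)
The plan is to apply Theorem~\ref{Th: CZO}. This reduces the claim to verifying two things: that $V^sL^{-s} \in GCZO_\rho(\gamma,\theta)$ for a suitable pair $(\gamma,\theta)$, and that its formal adjoint applied to $1$, namely $L^{-s}(V^s)$, satisfies the requisite oscillation bound. Since Sugano's $L^p$ result cited above provides $L^\theta$-boundedness of $V^sL^{-s}$ for any $\theta \in (1,\sigma/s)$, it suffices to verify the kernel conditions \eqref{cond1}--\eqref{cond2} together with the appropriate BMO-type control of $L^{-s}(V^s)$.

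For the kernel conditions, I would write via subordination $L^{-s}f=\Gamma(s)^{-1}\int_0^\infty t^{s-1}e^{-tL}f\,dt$, so that the kernel of $V^sL^{-s}$ is $V(x)^s\,\Gamma(s)^{-1}\int_0^\infty t^{s-1}p_t^L(x,y)\,dt$ with $p_t^L$ the heat kernel of $L$. Three ingredients then combine: the Gaussian upper bound for $p_t^L$ enhanced with critical-radius decay $(1+\sqrt{t}/\rho(x))^{-N}$, the H\"older regularity of $p_t^L$ in the second variable of any order $\delta<\sigma_0\wedge 1$, and the reverse H\"older consequence $\int_{B(x_B,r)}V^{s\theta}\,dx\le Cr^{n-2s\theta}$ valid for $r\le\rho(x_B)$ and $s\theta<\sigma$. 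Integration in $t$ together with a standard dyadic decomposition of the annuli $\{2^kr_B<|x-x_B|<2^{k+1}r_B\}$ then verifies \eqref{cond1} (with arbitrarily large $N$) and \eqref{cond2} for any H\"older exponent $\gamma<\sigma_0\wedge 1$.

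The crux is to establish the oscillation estimate
\[
\Bigl(\frac{1}{|B|}\int_B|L^{-s}(V^s)(y)-(L^{-s}(V^s))_B|^{\theta'}\,dy\Bigr)^{1/\theta'}\le C\Bigl(\frac{r_B}{\rho(x_B)}\Bigr)^{\alpha}
\]
for every ball $B$ with $r_B\le\tfrac12\rho(x_B)$ and every $\alpha<s\sigma_0\wedge 1$. I would proceed by splitting the time integral in $L^{-s}(V^s)=\Gamma(s)^{-1}\int_0^\infty t^{s-1}e^{-tL}(V^s)\,dt$ at $t\sim r_B^2$: for small $t$ one relies on cancellation of $e^{-tL}(V^s)$ against a suitable local constant, combined with the pointwise bound $\|e^{-tL}(V^s)\|_\infty\lesssim t^{-s}(1+\sqrt{t}/\rho(x))^{-N}$ coming from reverse H\"older at scale $\sqrt t$; for large $t$ one uses the H\"older regularity of $e^{-tL}$ in the first variable applied to $V^s$. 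The resulting estimate supplies the hypothesis of Theorem~\ref{Th: CZO}(b) precisely when $n(1/p-1)\le\alpha$, i.e.\ $p>n/(n+\alpha)$, so letting $\alpha\uparrow s\sigma_0\wedge 1$ yields the full stated $p$-range; the $p=1$ case follows from the logarithmic variant in part (a). The $BMO_L^\alpha$ conclusion for $(V^sL^{-s})^*$ then follows immediately from Theorem~\ref{thm2}, upon noting that $V^sL^{-s}$ meets the dual kernel conditions defining $GCZO^*_\rho$ by a symmetric argument.

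The main obstacle is the sharp oscillation exponent $s\sigma_0\wedge 1$: a naive triangle inequality or pointwise bound on $e^{-tL}(V^s)$ produces a logarithmically divergent time integral near $t=0$, and only genuine cancellation in the time integral, together with precise use of the reverse H\"older inequality at scale $\sqrt{t}$, recovers the factor of $s$ and the correct H\"older exponent.
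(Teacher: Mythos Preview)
Your overall strategy is exactly the paper's: apply Theorem~\ref{Th: CZO} to $T=V^sL^{-s}$, verify membership in $GCZO_\rho(\gamma,\theta)$ via heat-kernel estimates for $V^s(\cdot)p_t(\cdot,y)$, and then establish a Lipschitz-type oscillation bound for $L^{-s}V^s1$. The $BMO_L^\alpha$ statement is obtained by duality (equivalently, Theorem~\ref{thm2} applied to $(V^sL^{-s})^*$), as you indicate.

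There is one genuine imprecision worth flagging. For the small-$t$ piece of the oscillation estimate you write that one ``relies on cancellation of $e^{-tL}(V^s)$ against a suitable local constant, combined with the pointwise bound $\|e^{-tL}(V^s)\|_\infty\lesssim t^{-s}(1+\sqrt{t}/\rho(x))^{-N}$''. That bound alone is not enough: integrating $t^{s-1}\cdot t^{-s}$ over $(0,r_B^2)$ diverges, exactly the problem you diagnose in your closing paragraph. The paper does \emph{not} subtract a local constant here. Instead it proves the sharper bound
\[
\bigl|e^{-tL}V^s(x)\bigr|\lesssim t^{-s}\Bigl(\tfrac{\sqrt{t}}{\rho(x)}\Bigr)^{\delta}\Bigl(1+\tfrac{\sqrt{t}}{\rho(x)}\Bigr)^{-N},\qquad 0<\delta\le s\sigma_0,
\]
the extra factor $(\sqrt{t}/\rho(x))^{\delta}$ arising because $t\fint_{B(x,\sqrt{t})}V\lesssim (\sqrt{t}/\rho(x))^{\sigma_0}$ for $\sqrt{t}\le\rho(x)$ (Lemma~\ref{Lem1: rho} and Remark~\ref{rem: V estimate}), and then raising to the power $s$. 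With this factor the small-$t$ integral converges and already delivers $(r_B/\rho_B)^\delta$ without any subtraction. The paper also splits the time integral into \emph{three} pieces, $(0,4r_B^2)$, $(4r_B^2,\rho_B^2)$, $(\rho_B^2,\infty)$; the middle region is where the H\"older regularity of $e^{-tL}V^s$ in $x$ is combined with the same $(\sqrt{t}/\rho_B)^{\delta_1}$ gain (choosing $\delta_1>\delta$) to absorb the logarithm $\log(\rho_B/r_B)$. Your two-piece description would need to be refined along these lines. Once you replace ``cancellation against a local constant'' with this improved pointwise bound, your sketch matches the paper's proof.
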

\noindent 
The results in Theorem \ref{thm5 appl3} are new, although the cases $s=\f{1}{2}$ and $s=1$ are known to map $H^p_L$ into $L^p$ for $\f{n}{n+1}<p\le 1$ (see \cite{FKL}).

\bigskip

 One may ask which operators $T$ and their adjoints $T^*$ are both bounded on $H^p_L$ (and consequently $BMO^\alpha_L$)? Applying Theorems \ref{Th: CZO} and \ref{thm2} would require that they be members of both $GCZO$ and $GCZO^*$, and recall from earlier remarks that this  imposes the $L^p$ boundedness of $T$ for $p$ close to both 1 and $\infty$. This can be guarunteed for example when $T$ is a Calder\'on--Zygmund operator, which is true of $\RT$ when $\sigma\ge n$, and of $\RRT$ when $V$ is a non-negative polynomial \cite{Zh}. In our final application, we show that with sufficient regularity on $V$, the operators $V^sL^{-s}$ and their adjoints $L^{-s}V^s$ both fall into the scope of Theorem \ref{Th: CZO}. 

\begin{Theorem}\label{thm6 appl4}
Suppose that $V\in \RH_\infty$ and that for some $C>0$
\begin{align}\label{V cond1}
|\nabla V(x)| \le C \rho(x)^{-3} \quad \text{a.e. x} 
\end{align}
Then for each $0<s\le 1$, the operator $L^{-s}V^s$ is bounded from $H^p_L$ into $H^p_L$ for $\f{n}{n+2s\wedge 1} < p\le 1$. 
As a consequence $V^sL^{-s}$ is bounded from $BMO^\alpha_L$ into $BMO^\alpha_L$ for all $0\le \alpha  < 2s\wedge 1$. 
\end{Theorem}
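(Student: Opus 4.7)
The plan is to apply Theorem \ref{Th: CZO} to $T = L^{-s}V^s$, which yields the boundedness of $T$ on $H^p_L(\mathbb{R}^n)$; the $BMO^\alpha_L$ statement for $V^s L^{-s} = T^*$ then follows from the $H^p_L$--$BMO^\alpha_L$ duality described in Section \ref{sec: BMO}. Concretely, for each $\gamma \in (0,\,2s \wedge 1)$ and each $\theta > 1$, I aim to show that $T \in GCZO_\rho(\gamma,\theta)$ and that $T^*1 = V^s L^{-s}1$ verifies the hypothesis of Theorem \ref{Th: CZO}(a) or (b); letting $\gamma \uparrow 2s \wedge 1$ then covers the full range $p > n/(n+2s\wedge 1)$. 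Note that under $V \in RH_\infty$ we have $\sigma_0 = 2$, so the constraint $\gamma < \sigma_0$ in Theorem \ref{Th: CZO} is automatically satisfied since $2s \wedge 1 \leq 1 < 2$.

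For the $L^\theta$ boundedness of $T$, Sugano's theorem \cite{Su} shows that $V^s L^{-s}$ is bounded on $L^p$ for $1 < p < \sigma/s$; since $V \in RH_\infty$ permits $\sigma$ arbitrarily large, dualizing yields $L^\theta$ boundedness of $T$ for every $\theta > 1$. For the kernel estimates \eqref{cond1}--\eqref{cond2}, I would use the subordination formula
\[
K_T(x,y) = \frac{V^s(y)}{\Gamma(s)}\int_0^\infty t^{s-1} p_t^L(x,y)\,dt,
\]
where $p_t^L$ is the heat kernel of $L$. Under $V \in RH_\infty$ with $|\nabla V| \lesssim \rho^{-3}$, classical estimates give Gaussian upper bounds on $p_t^L$ with polynomial decay factor $(\rho(x_B)/\sqrt{t})^N$ controlling the large-$t$ regime, together with (near-)Lipschitz regularity in the second variable. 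Combined with the pointwise bound $V^s(y) \lesssim \rho(y)^{-2s}$ and a H\"older-$(2s \wedge 1)$ regularity of $V^s$ at scale $\rho$, integrating in $t$ yields \eqref{cond1} and \eqref{cond2}.

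Next I verify the hypothesis on $T^*1 = V^sL^{-s}1$. Using $e^{-tL}1(x)\lesssim \min(1, e^{-c\sqrt{t}/\rho(x)})$, the subordination formula gives $L^{-s}1(x) \lesssim \rho(x)^{2s}$, so $T^*1 \in L^\infty(\mathbb{R}^n)$. Using the same heat kernel regularity, one shows that $T^*1$ is H\"older of order $\gamma$ at scale $\rho$: for every ball $B = B(x_B, r_B)$ with $r_B \leq \rho(x_B)/2$,
\[
\Big(\frac{1}{|B|} \int_B |T^*1(y) - (T^*1)_B|^{\theta'}\,dy\Big)^{1/\theta'} \lesssim \Big(\frac{r_B}{\rho(x_B)}\Big)^{\gamma}.
\]
This is precisely the hypothesis of Theorem \ref{Th: CZO}(a) for $p = 1$ (where the logarithmic factor $\log(\rho(x_B)/r_B)$ is absorbed by $(r_B/\rho(x_B))^\gamma$) and of Theorem \ref{Th: CZO}(b) whenever $\gamma > n(1/p-1)$; varying $\gamma$ up to $2s \wedge 1$ gives the full range $p > n/(n+2s\wedge 1)$, and the $BMO^\alpha_L$ conclusion for $V^s L^{-s}$ follows by duality.

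The main obstacle is establishing the H\"older-$(2s \wedge 1)$ regularity, both of the kernel of $L^{-s}V^s$ in the second variable and of the function $V^sL^{-s}1$. This requires exploiting the full strength of the hypotheses $V \in RH_\infty$ and $|\nabla V| \lesssim \rho^{-3}$, which upgrade the heat kernel of $L$ to (near-)Lipschitz regularity, together with a careful analysis of how the potentially singular behaviour of $V^{s-1}\nabla V$ near the zero set of $V$ is tempered by the smoothing action of $L^{-s}$; this is precisely where the additional regularity hypothesis on $V$ beyond \eqref{L} is essential.
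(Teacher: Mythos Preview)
Your plan matches the paper's: apply Theorem~\ref{Th: CZO} to $T=L^{-s}V^s$, establish $T\in GCZO_\rho(\gamma,\theta)$ from the heat-kernel bounds together with the pointwise control $V\lesssim\rho^{-2}$ (a consequence of $V\in RH_\infty$), and verify the $T^*1$ hypothesis via a pointwise H\"older estimate $|V^sL^{-s}1(x)-V^sL^{-s}1(y)|\lesssim(r_B/\rho_B)^\delta$ for $\delta<2s\wedge1$. The paper in fact obtains the kernel condition~\eqref{cond2} for all $\gamma<1$; the threshold $2s\wedge 1$ enters only through the $T^*1$ estimate, though your weaker claim is already sufficient for the stated range of $p$.

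One correction to your last paragraph: in both the kernel $K_T(x,y)=\Gamma(s)^{-1}V^s(y)\int_0^\infty t^{s-1}p_t(x,y)\,dt$ (where regularity in $y$ is needed) and in $T^*1(x)=V^s(x)\,L^{-s}1(x)$, the factor $V^s$ sits \emph{outside} the integral operator as a pointwise multiplier, so no smoothing from $L^{-s}$ acts on it. The paper does not temper the potential singularity of $V^{s-1}\nabla V$ via smoothing; instead it proves the direct Lipschitz-type bound $|V^s(x)-V^s(y)|\lesssim\rho(x)^{-1-2s}|x-y|$ from the mean value theorem and the hypothesis $|\nabla V|\lesssim\rho^{-3}$ (Lemma~\ref{lem: V lip}), and then combines this with the standard heat-kernel H\"older regularity of Proposition~\ref{prop-kernelestimates} to obtain both~\eqref{cond2} (Proposition~\ref{prop: VP est2}) and the $T^*1$ control (Lemma~\ref{lem: VE}).
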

\noindent
The condition $V\in\RH_\infty$ ensures that both $V^sL^{-s}$ and $L^{-s}V^s$ are $L^p$ bounded for all $1<p<\infty$, while \eqref{V cond1} furnish sufficient smoothness for the conditions of Theorems \ref{Th: CZO} and \ref{thm2} to hold. Examples of $V$ satisfying the conditions of Theorem \ref{thm6 appl4} are non-negative polynomials and in particular include the harmonic oscillator $V(x)=|x|^2$.

\bigskip

This paper is organised as follows. In Section 3 we recall the Hardy and BMO type spaces associated to Schr\"odinger operator $L$, and  introduce a new molecular decomposition for the Hardy spaces. In Section 4 we provide the proof of the $T1$ criterions Theorems \ref{Th: CZO} and \ref{thm2} for Hardy and BMO type spaces respectively. Finally in Section 5 we give applications of the $T1$ criterion by proving Theorems \ref{thm3 appl1}--\ref{thm6 appl4}. 

Throughout the paper, we always use $C$ and $c$ to denote positive constants that are independent of the main parameters involved but whose values may differ from line to line. We will write $A\lesi B$ if there is a universal constant $C$ so that $A\leq CB$ and $A\sim B$ if $A\lesi B$ and $B\lesi A$. Given a ball $B$ we refer to the ball $B(x_B,r_B)$ with centre $x_B$ and radius $r_B$. We also denote by $\rho_B:=\rho(x_B)$. The notation 
$$\barint_B f= \f{1}{|B|}\int_B f$$
refers to the average of $f$ on $B$. The expression $a\wedge b$ denotes the minimum of $a$ and $b$. Given a ball $B$, the set $U_j(B)$ denotes $2^jB\backslash 2^{j-1}B$ for $j\ge 1$ and denotes $B$ if $j=0$.

\section{Preliminaries}
In this section we recall the well-known heat kernel upper bounds for the Schr\"odinger operator as well as properties for $V$ and its critical radius function $\rho$ as defined in \eqref{criticalfunction}.

The following estimates on the heat kernel of $L$ are well known.
\begin{Proposition}{\upshape(\cite{DZ2,DZ3})}\label{prop-kernelestimates}
	Let $L=-\Delta+V$ with $V\in \RH_\sigma$ for some $\sigma\ge n/2$. Then for each $N>0$ there exists $C_N>0$ such that
	\begin{align}\label{hk bound}
		p_t(x,y)\le C_N\frac{e^{-|x-y|^2/ct}}{t^{n/2}} \ContainC{1+\frac{\sqrt{t}}{\rho(x)}+\frac{\sqrt{t}}{\rho(y)}}^{-N}
	\end{align}
	and
	\begin{align}\label{hk holder}
		|p_t(x,y)-p_t(x',y)|\le C_N\ContainC{\frac{|x-x'|}{\sqrt{t}}}^{\sigma_1} \frac{e^{-|x-y|^2/ct}}{t^{n/2}} \ContainC{1+\frac{\sqrt{t}}{\rho(x)}+\frac{\sqrt{t}}{\rho(y)}}^{-N}
	\end{align}
	whenever $|x-x'|\le \sqrt{t}$ and for any $0<\sigma_1<\sigma_0$.
\end{Proposition}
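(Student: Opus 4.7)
The first estimate follows from two ingredients. Since $V \ge 0$, the Trotter product / Feynman--Kac representation $p_t(x,y) = E^{x,y}_t\!\left[e^{-\int_0^t V(\omega_s)\,ds}\right]$ (with $E^{x,y}_t$ the Brownian bridge expectation) gives the pointwise domination by the free heat kernel,
\[
p_t(x,y) \le (4\pi t)^{-n/2}\,e^{-|x-y|^2/4t},
\]
which accounts for the Gaussian factor. To incorporate the $(1+\sqrt{t}/\rho(x)+\sqrt{t}/\rho(y))^{-N}$ decay for each $N>0$, I would argue by induction on $N$. The base case $N=0$ is the bound above. For the inductive step, I would combine the semigroup identity $p_t = p_{t/2}\ast p_{t/2}$ with the fundamental properties of $\rho$ due to Shen: that $\rho$ is slowly varying, $\rho(x) \sim \rho(y)$ for $|x-y| \le C\rho(x)$, and satisfies a polynomial growth bound $\rho(y) \le C\rho(x)(1+|x-y|/\rho(x))^{k_0/(k_0+1)}$ for a $k_0$ depending on $n$ and the $RH$ constant. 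The $RH_\sigma$ hypothesis furnishes the crucial integrated bound $\int_{B(x,r)} V \lesssim r^{n-2}(r/\rho(x))^{\sigma_0}$ for $r \ge \rho(x)$; this $\sigma_0$-excess is what drives the polynomial $\rho$-decay upon iteration.

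The Hölder regularity in $x$ is obtained from the parabolic equation $(\partial_s + L)u = 0$ satisfied by $u(s,x) = p_s(x,y)$, combined with the already-established Gaussian bound. In the regime $\sqrt{t} \le \rho(x)$ the $\rho$-factor reduces to $1$ and the problem is essentially local: since $V \in RH_\sigma$ lies in a parabolic Kato-type class with gain $\sigma_0 = 2 - n/\sigma$, standard parabolic Moser--De Giorgi arguments yield interior Hölder continuity with any exponent $\sigma_1 < \sigma_0$. In the regime $\sqrt{t} > \rho(x)$ I would use the Duhamel (perturbation) identity
\[
p_t(x,y) - p_t(x',y) = \bigl(p^0_t(x,y) - p^0_t(x',y)\bigr) - \int_0^t\!\!\int_{\mathbb{R}^n}\bigl(p^0_{t-s}(x,z) - p^0_{t-s}(x',z)\bigr)\,V(z)\,p_s(z,y)\,dz\,ds,
\]
where $p^0$ is the free heat kernel. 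The free difference enjoys Hölder regularity of any order with full Gaussian localisation. The perturbation integral, after splitting the $z$-integration into dyadic annuli around $x$ and invoking both the Hölder regularity of $p^0$ and the first part of the proposition for $p_s$, is controlled by a weighted integral of $V$ which the $RH_\sigma$ estimate bounds with precisely the gain $\sigma_1 < \sigma_0$.

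The main obstacle is the inductive step in the first part: the decomposition $p_t = p_{t/2}\ast p_{t/2}$ splits the $\rho(x)$ and $\rho(y)$ contributions asymmetrically, and one must balance the Gaussian decay in $|x-y|$ against the extra $\rho$-gain without degrading the exponential localisation. This is handled by distinguishing $|x-y| \lesssim \sqrt{t}$ from $|x-y| \gg \sqrt{t}$, and in the latter regime trading part of the exponential factor for polynomial decay via the standard Gaussian inequality $e^{-c|x-y|^2/t}\le C_M (\sqrt{t}/|x-y|)^M$. A secondary obstacle is that the Hölder exponent cannot reach $\sigma_0$: a logarithmic divergence in the Duhamel integral at the endpoint forces the strict inequality $\sigma_1 < \sigma_0$, which is why the proposition states the result for every $0 < \sigma_1 < \sigma_0$ rather than at the endpoint.
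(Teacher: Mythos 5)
Note first that the paper does not prove this proposition at all: it is quoted verbatim from Dziuba\'nski--Zienkiewicz \cite{DZ2,DZ3}, so the only meaningful comparison is with those works. Your overall architecture --- domination by the free Gaussian via Feynman--Kac (valid since $V\ge 0$), bootstrapping the factor $(1+\sqrt t/\rho)^{-N}$ through the semigroup identity using Shen's properties of $\rho$, and a Duhamel perturbation off $e^{t\Delta}$ for the H\"older estimate --- is indeed the shape of the arguments there.

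However, the mechanism you give for the polynomial $\rho$-decay has a genuine flaw. You call ``crucial'' the bound $\int_{B(x,r)}V\lesssim r^{n-2}(r/\rho(x))^{\sigma_0}$ for $r\ge\rho(x)$. That inequality points the wrong way: by Lemma~\ref{Lem1: rho} (ii)--(iii), for $r\ge\rho(x)$ one has the \emph{lower} bound $\int_{B(x,r)}V\gtrsim r^{n-2}(r/\rho(x))^{\sigma_0}$, while the upper bound at those scales carries the doubling exponent $n_0$, not $\sigma_0$ (Lemma~\ref{Lem1: rho} (iv), Remark~\ref{rem: V estimate}; the $\lesssim$ statement with $\sigma_0$ is only valid for $r\le\rho(x)$). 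More to the point, no upper bound on the mass of $V$ can ever force decay of $e^{-tL}1$ --- it is consistent with $V$ vanishing near $x$, in which case $e^{-tL}1(x)\approx 1$. The one-step gain $\int p_t(x,y)\,dy\lesssim(1+\sqrt t/\rho(x))^{-\delta}$ that your induction is supposed to iterate must come from the lower bound on the averages of $V$ at scale $\rho(x)$ (the normalisation $\rho(x)^{2-n}\int_{B(x,\rho(x))}V=1$), fed either into a Fefferman--Phong type inequality or into the perturbation identity $1-e^{-tL}1(x)=\int_0^t e^{(t-s)\Delta}\bigl(Ve^{-sL}1\bigr)(x)\,ds$, as in \cite{DZ2,DZ3} and Shen \cite{Sh}. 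A second, lesser, gap concerns \eqref{hk holder}: parabolic Moser--De Giorgi arguments give H\"older continuity with \emph{some} small unquantified exponent, not with every $\sigma_1<\sigma_0$; the exponent arbitrarily close to $\sigma_0=2-n/\sigma$ comes precisely from your Duhamel identity combined with $r^2\barint_{B}V\lesssim(r/\rho_B)^{\sigma_0}$ for $r\le\rho_B$, so that identity should be run in the local regime $\sqrt t\lesssim\rho(x)$, and the regime $\sqrt t\gg\rho(x)$ is then handled by writing $p_t=p_{t/2}\circ p_{t/2}$ and combining the local H\"older estimate with the already-established decay in \eqref{hk bound}. With these two repairs your sketch does line up with the cited proofs.
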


For $\sigma>1$, the class of locally integrable functions satisfying \eqref{RH} will be denoted $\RH_\sigma$. For $\sigma=\infty$, the left hand side of \eqref{RH} is replaced by the essential supremum over $B$. It is well known that elements of $\RH_\sigma$ are doubling measures, and that $\RH_\sigma\subset \RH_{\sigma'}$ whenever $\sigma'<\sigma$ . 

We list but do not prove the following properties of the critical function $\rho$ in \cite{Sh}.
\begin{Lemma}\label{Lem1: rho}
Let $\rho$ be the critical radius function associated with $L$ defined in \eqref{criticalfunction}. Then we have:
\begin{enumerate}[{\upshape (i)}]
\item There exist positive constants $k_0 \ge 1$ and $C_0>0$ so that
$$
C_0^{-1}[\rho(x)]^{1+k_0}[\rho(x)+|x-y|]^{-k_0}\leq \rho(y)\leq C_0[\rho(x)]^{1/(1+k_0)}[\rho(x)+|x-y|]^{k_0/(1+k_0)},
$$
for all $x,y\in \mathbb{R}^n$.

In particular for any ball $B$, and any $x,y\in B$ then $\rho(x)\le C_0^2 \bigl(1+\f{r_B}{\rho_B}\bigr)^2\rho(y)$.
\item There exists $C>0$ so that
$$
\f{1}{r^{n-2}}\int_{B(x,r)}V(y)dy\leq C\Big(\f{r}{R}\Big)^{\sigma_0}\f{1}{R^{n-2}}\int_{B(x,R)}V(y)dy
$$
for all $x\in M$ and $R>r>0$. 
\item For any $x\in M$, we have
$$
\f{1}{\rho(x)^{n-2}}\int_{B(x,\rho(x))}V(y)dy=1.
$$
\item There exists $C>0$ so that for any $r>\rho(x)$ 
$$ r^2 \barint_{B(x,\rho(x)} V(y)\,dy \le C \Bigl(\f{r}{\rho(x)}\Bigr)^{n_0-n+2}
$$
where $n_0$ is the doubling order of $V$. That is, $\int_{2B} V\lesssim 2^{n_0} \int_B V$ for any ball $B$. 
\end{enumerate}
\end{Lemma}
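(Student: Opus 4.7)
The plan is to prove the four properties in the order (iii), (ii), (iv), (i), each subsequent property building on the earlier ones. The main tools are the very definition of $\rho$ in \eqref{criticalfunction}, the reverse H\"older inequality \eqref{RH}, and the doubling property of $V$ that it entails.

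For (iii) I would argue by continuity. Set $\phi_x(r):=r^{2-n}\int_{B(x,r)}V(y)\,dy$. Since $V\in L^1_{\mathrm{loc}}(\RR{n})$ and $n\ge 3$, $\phi_x$ is continuous in $r$ and $\phi_x(r)\to 0$ as $r\to 0^+$. On the other hand \eqref{RH} prevents $V$ from having arbitrarily small averages on large balls, so $\phi_x(r)\to\infty$ as $r\to\infty$. The intermediate value theorem, combined with strict monotonicity once $\phi_x$ exceeds $1$ (a consequence of the lower bound on $V$ coming from \eqref{RH}), produces a unique $r=\rho(x)$ realising $\phi_x(r)=1$, and this $r$ must coincide with the supremum in \eqref{criticalfunction}.

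For (ii) I apply H\"older with exponent $\sigma$ followed by \eqref{RH} on the larger ball: for $R>r$,
\begin{align*}
\int_{B(x,r)}V
&\le |B(x,r)|^{1-1/\sigma}\Bigl(\int_{B(x,R)}V^\sigma\Bigr)^{1/\sigma}\\
&\le C\,|B(x,r)|^{1-1/\sigma}|B(x,R)|^{1/\sigma-1}\int_{B(x,R)}V.
\end{align*}
Dividing by $r^{n-2}$ and simplifying the power of $r/R$ produces precisely the factor $(r/R)^{2-n/\sigma}=(r/R)^{\sigma_0}$. For (iv), assuming $r>\rho(x)$, iterate the doubling inequality $\int_{2B}V\lesi 2^{n_0}\int_B V$ along the dyadic chain from $B(x,\rho(x))$ to $B(x,r)$ to get $\int_{B(x,r)}V\lesi (r/\rho(x))^{n_0}\int_{B(x,\rho(x))}V$. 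Substituting (iii), namely $\int_{B(x,\rho(x))}V=\rho(x)^{n-2}$, and dividing by $|B(x,r)|\sim r^n$ then multiplying by $r^2$ yields $r^2\barint_{B(x,r)}V\lesi (r/\rho(x))^{n_0-n+2}$, matching the stated bound (with the right-hand average interpreted on $B(x,r)$, as is consistent with the doubling estimate).

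Property (i) is where the main technical effort lies. The idea is to compare $\phi_y$ to $\phi_x$ through the elementary inclusions $B(y,s)\subset B(x,s+|x-y|)$ and $B(x,s)\subset B(y,s+|x-y|)$, use (ii) to relate the corresponding values of $\phi$, and invoke (iii) to pin down $\rho(x)$ and $\rho(y)$ via the level set $\{\phi=1\}$. When $|x-y|\lesi\rho(x)$ this produces directly $\rho(y)\sim\rho(x)$ with constants depending only on $\sigma_0$ and $n$. For $|x-y|\gg\rho(x)$ one applies a chaining argument along a sequence of intermediate points at mutual distance comparable to the local value of $\rho$, which propagates the comparison across many scales; each step incurs a polynomial loss, and after optimising the number of links the combined loss yields the factor $[\rho(x)+|x-y|]^{\pm k_0/(1+k_0)}$ of the stated inequality, with $k_0$ arising from balancing the exponents created at each link. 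The ``in particular'' statement follows by inserting $|x-y|\le 2r_B$ into the bilateral estimate and collecting the factor of $(1+r_B/\rho_B)^2$.

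The hardest step is the chaining in (i); once the continuity argument for (iii) and the H\"older manipulation for (ii) are in hand, (iv) is routine, but (i) requires careful bookkeeping to track how the exponents $k_0$ emerge and to verify that the constants are independent of $x$ and $y$.
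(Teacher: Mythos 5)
Parts (ii), (iii) and (iv) of your proposal are essentially sound (and note the paper itself gives no proof of this lemma: it simply quotes Shen \cite{Sh}, where (i) is Lemma 1.4 and (ii)--(iii) come from Lemma 1.2). Two small repairs are needed there: your claim that $\phi_x(r)=r^{2-n}\int_{B(x,r)}V\to 0$ as $r\to 0^+$ does not follow from $V\in L^1_{loc}$ alone, since $r^{2-n}$ blows up; you should derive it from the growth estimate (ii) (so (ii) should precede (iii)), and the appeal to ``strict monotonicity once $\phi_x$ exceeds $1$'' is both unjustified (reverse H\"older gives no pointwise lower bound on $V$) and unnecessary -- continuity of $\phi_x$ at the supremum in \eqref{criticalfunction} already forces $\phi_x(\rho(x))=1$.

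The genuine gap is in (i), which is the heart of the lemma. Your chaining scheme, with intermediate points at mutual distance comparable to the local value of $\rho$, cannot produce the stated polynomial bounds: if $\rho$ stays of size $\rho(x)$ along the chain, the number of links from $x$ to $y$ is of order $|x-y|/\rho(x)$ (it is not logarithmic, and if $\rho$ shrinks along the way it is not even controlled a priori without the very inequality you are proving), so multiplying the per-step comparability constants gives a loss of order $C^{|x-y|/\rho(x)}$, exponential rather than polynomial. Moreover nothing in the sketch explains how ``optimising the number of links'' would produce the two different exponents $-k_0$ and $k_0/(1+k_0)$, whose precise relation is the whole point. The correct argument (Shen's) is a direct two-ball comparison, not a chain: writing $R=\rho(x)+|x-y|$ and assuming $\rho(y)\le R$, use $B(y,\rho(y))\subset B(x,2R)$, apply the reverse H\"older growth estimate (ii) centred at $y$ between the radii $\rho(y)$ and $2R$, the doubling estimate (essentially your (iv)) centred at $x$ between $\rho(x)$ and $3R$, and the normalisations $\phi_y(\rho(y))=\phi_x(\rho(x))=1$ from (iii); this yields $1\le C(\rho(y)/R)^{\sigma_0}(R/\rho(x))^{n_0+2-n}$, hence the lower bound $\rho(y)\ge c\,\rho(x)\,(\rho(x)/R)^{k_0}$ with $k_0$ explicit in terms of $\sigma_0$ and $n_0$. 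The sublinear upper bound is then \emph{not} proved independently: it follows from the lower bound by interchanging $x$ and $y$ and a short case analysis ($\rho(y)\le |x-y|$ versus $\rho(y)>|x-y|$), and this symmetry is exactly what produces the exponent $k_0/(1+k_0)$. Finally, your one-line justification of the ``in particular'' statement also does not come out as claimed: inserting $|x-y|\le 2r_B$ into the bilateral estimate gives a factor $(1+r_B/\rho_B)$ to the power $k_0$ (or $1+k_0$ if you pass through the centre $x_B$), not the power $2$, so that step needs to be stated with the exponent depending on $k_0$ (the exponent $2$ in the paper should be read in that spirit).
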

\begin{Remark}\label{rem: V estimate}
It follows from Lemma \ref{Lem1: rho} (ii) and (iii) that for any ball $B$,
\begin{align*}
	r_B^2\barint_{B}V(y)\,dy \lesssim \left\lbrace\begin{array}{ll}
		\Bigl(\dfrac{r_B}{\rho_B}\Bigr)^{\sigma_0} \qquad & r_B\le \rho_B\\
		\Bigl(\dfrac{r_B}{\rho_B}\Bigr)^{n_0+2-n} \qquad & r_B>\rho_B
		\end{array}
		\right. 
	\end{align*}

\end{Remark}

\begin{Lemma}[\cite{DGMTZ}]\label{Lem2: rho}
Let $\rho$ be a critical function associated to Schr\"odinger operators $L=-\Delta+V$. Then there exists a sequence of points $\{x_\alpha\}_{\alpha\in \mathcal{I}}\subset \RR{n}$ and a family of functions $\{\psi_\alpha\}_{\alpha\in \mathcal{I}}$ satisfying for some $C>0$
\begin{enumerate}[{\rm (i)}]
\item $\bigcup_\alpha B(x_\alpha, \rho(x_\alpha)) = \RR{n}$.
\item For every $\lambda \geq 1$ there exist constants $C$ and $N_1$ such that $\sum_\alpha \chi_{B(x_\alpha, \rho(x_\alpha))}\leq C\lambda^{N_1}$.
\item ${\rm supp}\, \psi\subset B(x_\alpha, \rho(x_\alpha)/2)$ and $0\leq \psi_\alpha(x)\leq 1$ for all $x\in \RR{n}$;
\item $|\psi_\alpha(x)-\psi_\alpha(y)|\leq C |x-y|/\rho(x_\alpha)$;
\item $\sum_{\alpha}\psi_\alpha(x)=1$ for all $x\in \RR{n}$.
\end{enumerate}
\end{Lemma}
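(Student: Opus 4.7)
The plan is to build the points $\{x_\alpha\}$ via a Vitali-type selection at the $\rho$-scale, and then manufacture $\{\psi_\alpha\}$ by smoothing and normalizing bump functions subordinate to the resulting cover. The key non-trivial input is the H\"older-type comparison in Lemma \ref{Lem1: rho}(i), which guarantees that $\rho$ is slowly varying whenever two of the selected balls interact.

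First I would consider the family $\{B(x,\rho(x)/5):x\in\mathbb{R}^n\}$ and apply a Vitali covering argument to extract a maximal disjoint subfamily $\{B(x_\alpha,\rho(x_\alpha)/5)\}_{\alpha\in\mathcal{I}}$. By maximality, for any $x\in\mathbb{R}^n$ there is $\alpha$ with $B(x,\rho(x)/5)\cap B(x_\alpha,\rho(x_\alpha)/5)\neq\emptyset$. Lemma \ref{Lem1: rho}(i) forces $\rho(x)\sim\rho(x_\alpha)$ with absolute constants in this situation (the perturbation factor $(1+r/\rho)^{k_0/(1+k_0)}$ stays bounded because $|x-x_\alpha|\lesssim \rho(x_\alpha)+\rho(x)$). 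Consequently $x\in B(x_\alpha,\rho(x_\alpha))$, proving (i).

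For the bounded overlap (ii), fix $x\in \mathbb{R}^n$ and let $\mathcal{I}_x(\lambda)=\{\alpha:x\in B(x_\alpha,\lambda\rho(x_\alpha))\}$. Each $\alpha\in\mathcal{I}_x(\lambda)$ satisfies $|x-x_\alpha|\le \lambda\rho(x_\alpha)$, so Lemma \ref{Lem1: rho}(i) yields $\rho(x_\alpha)\sim_\lambda \rho(x)$ with constants of the form $\lambda^{C k_0}$. Hence all the $x_\alpha$'s lie in $B(x,C\lambda\rho(x))$ and the disjoint balls $B(x_\alpha,\rho(x_\alpha)/5)$ all have radius comparable to $\rho(x)$; a volume-counting argument inside $B(x,C\lambda\rho(x))$ then bounds $\#\mathcal{I}_x(\lambda)$ by $C\lambda^{N_1}$ for some $N_1=N_1(n,k_0)$.

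To produce the partition of unity, pick a single non-negative $C^\infty$ bump $\varphi$ supported in $B(0,1/2)$ with $\varphi\equiv 1$ on $B(0,1/4)$, and set $\tilde\psi_\alpha(x)=\varphi\bigl((x-x_\alpha)/\rho(x_\alpha)\bigr)$, so that $\tilde\psi_\alpha$ is supported in $B(x_\alpha,\rho(x_\alpha)/2)$ with $|\nabla\tilde\psi_\alpha|\lesssim 1/\rho(x_\alpha)$. The balls $B(x_\alpha,\rho(x_\alpha)/4)$ still cover $\mathbb{R}^n$ (by the same maximality argument, up to adjusting constants in the selection), so $\Phi(x):=\sum_\alpha\tilde\psi_\alpha(x)$ satisfies $1\le \Phi(x)\le C$ by (ii). Defining $\psi_\alpha:=\tilde\psi_\alpha/\Phi$ gives (iii) and (v) immediately, and (iv) follows from the product rule together with the uniform lower bound on $\Phi$, the gradient bound on $\tilde\psi_\alpha$, and the comparability $\rho(x)\sim\rho(x_\alpha)$ on the support of $\tilde\psi_\alpha$.

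The main obstacle is the overlap estimate (ii): without Lemma \ref{Lem1: rho}(i) the radii $\rho(x_\alpha)$ could a priori fluctuate wildly across indices hitting the same point, and the volume-packing argument would collapse. Once one absorbs the $\lambda^{Ck_0}$-loss coming from the H\"older exponent $k_0/(1+k_0)$ in Lemma \ref{Lem1: rho}(i), everything else is the standard Whitney machinery applied at the variable scale $\rho$.
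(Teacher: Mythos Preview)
The paper does not give its own proof of this lemma; it is simply quoted from \cite{DGMTZ}. Your proposal is the standard construction (and is essentially what is done in \cite{DGMTZ}): a Vitali-type maximal disjoint selection at the $\rho$-scale, bounded overlap via the slow-variation estimate of Lemma~\ref{Lem1: rho}(i) and volume counting, and a normalized smooth partition of unity. It is correct.

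The only mildly loose spot is the parenthetical claim that the balls $B(x_\alpha,\rho(x_\alpha)/4)$ ``still cover $\mathbb{R}^n$ \ldots\ up to adjusting constants in the selection''. As written, starting the Vitali step at scale $\rho(x)/5$ only gives $|x-x_\alpha|<\rho(x)/5+\rho(x_\alpha)/5$ together with $\rho(x)\sim\rho(x_\alpha)$, which need not force $|x-x_\alpha|<\rho(x_\alpha)/4$. To make the $1/4$-cover rigorous you should begin the selection at scale $\epsilon\rho(x)$ with $\epsilon$ chosen small in terms of the constants $C_0,k_0$ of Lemma~\ref{Lem1: rho}(i); the rest of the argument is unchanged.
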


\section{Hardy and Campanato spaces associated with Schr\"odinger operator}

In this section we recall the definition of Hardy space $H^p_L(\mathbb{R}^n)$ associated to  $L$ in terms of the maximal operator and of  atoms. Then we introduce a new kind of molecule for these $H^p_L(\mathbb{R}^n)$ in terms of size condition and weak cancellation condition,  and then we provide the molecule  characterisation for $H^p_L(\mathbb{R}^n)$. We also recall the BMO type space associated to  $L$, and note that it is the dual of $H^p_L(\mathbb{R}^n)$.

\subsection{Hardy spaces}\label{sec: Hardy}
We now recall some properties related to the atomic decomposition of Hardy spaces associated to Schr\"odinger operators. For further details on the theory of Hardy spaces associated to Schr\"odinger operators, we refer the reader to \cite{DZ1, DZ2, DZ3, JYY, YYd, YZ} and the references therein.

We first define the maximal operator associated to the heat semigroup:
$$ \MM_L f(x):=\sup_{t>0}\AbbsA{e^{-tL}f(x)} $$
For $0<p\le 1$ we denote by $L^p_b(\RR{n})$ the set of all $L^p$-functions with bounded support. We then set
$$ \mathfrak{S}_p(\RR{n}):= \bigl\{f: f\in L^s_b(\RR{n}) \;\text{for every}\; s\in [1,\infty]\bigr\}$$
Following \cite{DZ2} we define
\begin{Definition}[Hardy spaces]
	For $p\in (0,1]$, the Hardy space $H^p_L(\RR{n})$ is defined as the completion of
	$$\mathbb{H}^p_L:=\big\{f\in\mathfrak{S}_p: \MM_L f\in L^p\big\}$$
	in the quasi norm $\NormA{f}_{H^p_L} := \NormA{\MM_Lf}_p$.
\end{Definition}

\begin{Definition}\label{def: atoms}
	Let $0<p\le 1$ and $1<q\le \infty$. A function $a$ is called an $(p,q)_L$-atom for $L$ associated with a ball $B$
	\begin{enumerate}[\upshape(i)]
		\item $r_B\le  \rho_B$
		\item $\supp a\subset B$
		\item $\NormA{a}_q \le\AbbsA{B}^{1/q-1/p}$
		\item $\int a(x)\,dx=0$ whenever  $r_B< \rho_B/4$
	\end{enumerate}
\end{Definition}

Let $\frac{n}{n+\sigma_0}<p\le 1$ and $1<q\le \infty$. We then define the atomic Hardy spaces $H^{p,q}_{L,at}(\RR{n})$ as the completion of
\begin{equation}\label{eq-atomicHardy}
\mathbb{H}^{p,q}_{L,at}(\RR{n})=\{f: f=\sum_{j=1}^\vc \lambda_j a_j \ \text{in $L^2$},\  a_j \ \text{is an $(p,q)_L$-atom and  $\sum_j |\lambda_j|^p<\infty$}\}
\end{equation}
with respect to the norm
$$
\|f\|_{H^{p,q}_{L,at}(\RR{n})}=\inf\Big\{\Big[\sum_j |\lambda_j|^p\Big]^{1/p}: f=\sum_{j=1}^\vc \lambda_j a_j\Big\}.
$$

We also define the Hardy spaces in terms of finite atoms.
\begin{Definition}
We define $H^{p,q}_{L,at, {\rm fin}}(\RR{n})$ as the set of all functions $f=\sum_{j=1}^N \lambda_j a_j$, where $a_j$ is an $(p,q)_L$-atom if $q<\vc$ and continuous $(p,q)_L$-atom if $q=\vc$. For $f\in H^{p,q}_{L,at, {\rm fin}}(\RR{n})$, we define $\|f\|_{H^{p,q}_{L,at, {\rm fin}}(\RR{n})}$ similarly to $\|f\|_{H^{p,q}_{L,at}(\RR{n})}$, but the infimum is taken over finite linear decomposition of $(p,q)_L$-atoms.
\end{Definition}

We have the following result.
\begin{Proposition}
Let $\frac{n}{n+\sigma_0\wedge 1}<p\le 1$ and $1<q\le \infty$. Then we have the spaces $H^p_L(\RR{n})$ and $H^{p,q}_{L,at}(\RR{n})$ are coincide with equivalent norms.
\end{Proposition}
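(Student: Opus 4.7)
The plan is to establish the two inclusions $H^{p,q}_{L,at}(\RR{n}) \hookrightarrow H^p_L(\RR{n})$ and its converse separately, with the second being the substantial one.

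For the easy inclusion $H^{p,q}_{L,at} \hookrightarrow H^p_L$, by the $p$-subadditivity of the quasi-norm it suffices to establish the uniform estimate $\NormA{\MM_L a}_{L^p}\lesi 1$ for every $(p,q)_L$-atom $a$ supported in a ball $B=B(x_B,r_B)$. I would decompose $\RR{n}=4B\cup \bigcup_{j\ge 2} U_j(B)$. On $4B$, the desired bound follows at once from the $L^q$-boundedness of $\MM_L$ (a consequence of \eqref{hk bound} and the Hardy--Littlewood maximal bound), Hölder's inequality, and the size condition on $a$. On the far annuli $U_j(B)$ one splits into the cases $r_B<\rho_B/4$ and $r_B\ge \rho_B/4$. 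In the first case the atom has mean zero, so one writes
\begin{equation*}
e^{-tL}a(x)=\int [p_t(x,y)-p_t(x,x_B)]\,a(y)\,dy
\end{equation*}
and applies the Hölder regularity \eqref{hk holder} together with symmetry of the kernel to extract a factor $(r_B/\sqrt{t})^{\sigma_1}$, for any $0<\sigma_1<\sigma_0$. The Gaussian factor localises $\sqrt{t}\sim 2^jr_B$ on $U_j(B)$, yielding decay $2^{-j(n+\sigma_1)}$ after integration. Choosing $\sigma_1$ with $n(1/p-1)<\sigma_1<\sigma_0\wedge 1$ (possible since $p>n/(n+\sigma_0\wedge 1)$) produces a summable series in $j$. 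In the second case $r_B\ge \rho_B/4$, cancellation is unavailable but the critical-function decay $(1+\sqrt{t}/\rho(x_B))^{-N}$ in \eqref{hk bound}, combined with Lemma \ref{Lem1: rho}(i) to transfer $\rho(x_B)$ to $\rho_B$, provides the required summability.

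For the harder converse, I would perform a Calderón--Zygmund decomposition at the level of the maximal function. Given $f\in \mathbb{H}^p_L$, set $\Omega_k=\{\MM_L f>2^k\}$, take a Whitney cover $\{Q^k_j\}$ of $\Omega_k$, and build a partition of unity $\{\varphi^k_j\}$ subordinate to it. For each $k,j$ set $b^k_j := (f-c^k_j)\varphi^k_j$, where $c^k_j$ is chosen to be zero on Whitney cubes with $r_{Q^k_j}\ge \rho(x_{Q^k_j})/4$ (no cancellation required) and chosen to be the weighted mean of $f$ otherwise (to enforce the moment condition). Telescoping $f = \sum_k (g_{k+1}-g_k)$ with good parts $g_k=f-\sum_j b^k_j$ and regrouping yields an atomic decomposition, with the coefficient control $\sum|\lambda_j|^p\lesi \|\MM_L f\|^p_p$ coming from the level-set structure. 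Density passes the statement from $\mathbb{H}^p_L$ to its completion.

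The principal obstacle is the scale-dependent cancellation dichotomy in Definition \ref{def: atoms}. One must verify that the $b^k_j$ on small Whitney cubes are genuine $(p,q)_L$-atoms satisfying $\int b^k_j=0$, and, simultaneously, that on large Whitney cubes (where $c^k_j$ has been set to zero) the function $b^k_j$ still meets the atomic size bound without any cancellation. The key technical ingredient is the localisation from Lemma \ref{Lem2: rho}, which lets one partition the ambient space at scale $\rho$ and treat small-scale and large-scale Whitney cubes separately; Lemma \ref{Lem1: rho}(i) guarantees that $\rho$ is nearly constant across any Whitney cube, so the dichotomy is well-defined. For $p=1$ the argument is essentially that of \cite{DZ2}, and the extension to $n/(n+\sigma_0\wedge 1)<p<1$ is driven by upgrading the Hölder exponent $\sigma_1$ of \eqref{hk holder} arbitrarily close to $\sigma_0\wedge 1$, which is exactly what Proposition \ref{prop-kernelestimates} supplies.
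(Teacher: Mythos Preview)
Your proposal is correct and follows the same two-inclusion strategy as the paper. The paper, however, is far more terse: for the hard direction $H^p_L \hookrightarrow H^{p,q}_{L,at}$ it simply cites \cite{DZ2} for the identification $H^p_L \equiv H^{p,\infty}_{L,at}$ (which already yields $(p,\infty)$-atoms, hence $(p,q)$-atoms for any $q$), and for the easy direction $H^{p,q}_{L,at}\hookrightarrow H^p_L$ it just calls the argument ``standard'' with a pointer to \cite{DZ2,DZ3}. Your sketch of the Calder\'on--Zygmund/Whitney decomposition is precisely what \cite{DZ2} carries out, so you are unpacking the citation rather than taking a different route. One minor correction: \cite{DZ2} already treats the full range $\frac{n}{n+\sigma_0\wedge 1}<p\le 1$, not only $p=1$, so the extension you describe is in fact already in the literature the paper invokes.
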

\begin{proof}
It was proved in \cite{DZ2} that $H^p_L(\RR{n})\equiv H^{p}_{L,at,\vc}(\RR{n})$. From definition of $H^{p,q}_{L,at}(\RR{n})$, we have $H^{p}_{L,at,\vc}(\RR{n})\hookrightarrow H^{p,q}_{L,at}(\RR{n})$. On the other hand, by a standard argument, see for example \cite{DZ2, DZ3}, we can prove that $H^{p,q}_{L,at}(\RR{n})\hookrightarrow H^p_L(\RR{n})$. This implies that $H^p_L(\RR{n})$ and $H^{p,q}_{L,at}(\RR{n})$ are coincide with equivalent norms.
\end{proof}

We now prove the following result.

\begin{Proposition}\label{prop equiv finite atom}
Let $\frac{n}{n+\sigma_0\wedge 1}<p\le 1$ and  $1<q\le \infty$. Then the norms $\|\cdot\|_{H^{p,q}_{L,at, {\rm fin}}(\RR{n})}$ and $\|\cdot\|_{H^{p,q}_{L,at}(\RR{n})}$ are equivalent in $H^{p,q}_{L,at, {\rm fin}}(\RR{n})$.
\end{Proposition}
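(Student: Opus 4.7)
The inequality $\|f\|_{H^{p,q}_{L,at}} \le \|f\|_{H^{p,q}_{L,at,{\rm fin}}}$ is immediate from the definitions, since every finite atomic representation is also a convergent one. The substantive content is the reverse inequality on $H^{p,q}_{L,at,{\rm fin}}(\RR{n})$. My plan is to adapt the Meda--Sj\"ogren--Vallarino truncation argument, as extended to operator-theoretic settings (see e.g.\ \cite{YYd,JYY}), to the Schr\"odinger setting at hand.

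Fix $f\in H^{p,q}_{L,at,{\rm fin}}$, a \emph{finite} linear combination of $(p,q)_L$-atoms; in particular $f$ has compact support in some ball $B_0=B(x_0,r_0)$, lies in $L^q$ (resp.\ is continuous for $q=\infty$), and by applying the partition of unity $\{\psi_\alpha\}$ of Lemma \ref{Lem2: rho} I may reduce to the case $r_0\lesi\rho_0:=\rho(x_0)$. Invoking the already established equivalence $H^{p,q}_{L,at}=H^p_L$, I produce an atomic decomposition $f=\sum_j\lambda_j a_j$ with $\bigl(\sum_j|\lambda_j|^p\bigr)^{1/p}\le 2\|f\|_{H^{p,q}_{L,at}}$, each $a_j$ a $(p,q)_L$-atom supported in $B_j$, the series converging in $L^q$ when $q<\infty$ (resp.\ uniformly, after passing to continuous atoms, when $q=\infty$). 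Truncation at level $N$ gives $f=S_N+g_N$ with $S_N:=\sum_{j\le N}\lambda_j a_j$ already a finite atomic combination of norm $\lesi \|f\|_{H^{p,q}_{L,at}}$. The goal is then to realise the residual $g_N=f-S_N$ as $\mu_N b_N$ for a single $(p,q)_L$-atom $b_N$ with $|\mu_N|\to 0$, so that $f=S_N+\mu_N b_N$ is the desired finite atomic representation with controlled norm.

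Verifying that $g_N$ is (a multiple of) an atom amounts to three checks: (i) compact support, automatic since $g_N=f-S_N$ and both summands are compactly supported, so $\supp g_N\subset\tilde B_N$ for some ball containing $B_0$ and the finitely many $B_j$ with $j\le N$; (ii) the bound $\|g_N\|_q\to 0$, from $L^q$-convergence of the series; and (iii) the cancellation $\int g_N=0$, which is vacuous when $r_{\tilde B_N}\ge\rho_{\tilde B_N}/4$ and can otherwise be arranged by choosing a reference atomic decomposition whose atoms all carry cancellation (such as the Calder\'on--Zygmund-type decomposition of the grand maximal function of \cite{DZ2} when $r_0\lesi\rho_0$). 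The principal obstacle, which drives the quantitative work, is the estimate $|\mu_N|\lesi \|g_N\|_q\,|\tilde B_N|^{1/p-1/q}\to 0$: since a priori $|\tilde B_N|$ may grow in $N$, one must pair the $L^q$-decay of the tail against a decomposition whose supporting balls $B_j$ enjoy controlled growth, again available from the dyadic Calder\'on--Zygmund construction. The case $q=\infty$ is deduced from some $q<\infty$ via the density of continuous atoms in the finite-atomic class.
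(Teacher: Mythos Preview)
Your outline follows the Meda--Sj\"ogren--Vallarino truncation scheme and is, as a plan, correct; but it is a genuinely different route from the paper's proof. The paper does not run the truncation argument at all. Instead, it uses the partition of unity $\{\psi_\alpha\}$ of Lemma~\ref{Lem2: rho} to write $f=\sum_{\alpha\in\mathcal{I}_f}\psi_\alpha f$ with $\mathcal{I}_f$ finite, observes that each $\psi_\alpha f$ is supported in a ball of critical-radius scale, and then \emph{quotes} the finite-atomic equivalence for local Hardy spaces from \cite{YY,YY1} to bound $\|\psi_\alpha f\|_{H^{p,q}_{L,at,{\rm fin}}}$ by a local heat-maximal function. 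The comparison back to $\|\mathcal{M}_L f\|_{L^p}$ then follows the argument in \cite[p.~53]{DZ2}. In other words, the paper reduces the problem to an already-established local Hardy space result, whereas you propose to redo that result's proof in situ.

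What each approach buys: the paper's argument is short and avoids reworking the delicate points you flag (uniform support control of the tail, $L^q$-convergence of the specific Calder\'on--Zygmund atomic series, the cancellation bookkeeping when $r_B<\rho_B/4$, and the separate $q=\infty$ analysis), at the cost of relying on a black box. Your approach is self-contained but requires executing those details carefully; in particular, your treatment of the support $\tilde B_N$ is a bit loose --- in the standard MSV argument one shows the atoms from the grand-maximal CZ decomposition of a compactly supported $f$ all sit in a \emph{fixed} enlarged ball $\tilde B$ (independent of $N$), not merely in a growing $\tilde B_N$, and this is what makes $|\mu_N|\to 0$ automatic. Your last sentence on $q=\infty$ is also too brief: deducing it from $q<\infty$ requires an additional uniform-approximation step, not just density.
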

\begin{proof}
Let $f\in H^{p,q}_{L,at, {\rm fin}}(\RR{n})$. Obviously, we have
$$
\|f\|_{H^{p,q}_{L,at}(\RR{n})} \leq \|f\|_{H^{p,q}_{L,at, {\rm fin}}(\RR{n})}.
$$
Hence, it suffices to prove the converse inequality. Indeed, we first note that $f=\sum_{\alpha\in \mathcal{I}_f}\psi_\alpha f$ where $\mathcal{I}_f=\{\alpha: B_\alpha\cap {\rm supp}\, f\neq \emptyset\}$. Since ${\rm supp}\, f$ is bounded, from Lemma \ref{Lem1: rho}, the set $\mathcal{I}_f$ is finite. Hence,
$$
\|f\|_{H^{p,q}_{L,at, \epsilon, {\rm fin}}(M)}\leq \sum_{\alpha\in \mathcal{I}_f}\|\psi_\alpha f\|_{H^{p,q}_{L,at, \epsilon, {\rm fin}}(M)}.
$$
From the theory of local Hardy spaces in Theorem 3.12 and Theorem 6.2 in \cite{YY} (see also \cite{YY1}), we also get that
$$
\sum_{\alpha\in \mathcal{I}_f}\|\psi_\alpha f\|_{H^{p,q}_{L,at, {\rm fin}}(\mathbb{R}^n)}\lesi \sum_{\alpha\in \mathcal{I}_f}\Big\|\sup_{0<t<[\rho(x_\alpha)]^2}|e^{-t\Delta}\psi_\alpha f|\,\Big\|_{L^p(\mathbb{R}^n)}.
$$
We now just follows the argument as in \cite[p. 53]{DZ2} to conclude that
$$
\|f\|_{H^{p,q}_{L,at,{\rm fin}}(M)}\lesi \|\mathcal{M}_Lf\|_{L^p(M)}.
$$
This completes our proof.
\end{proof}
\subsection{Molecular characterizations}
In this section we introduce a new kind of molecule, and show that the Hardy spaces $H^p_L$ can be characterized by such molecules. 
\begin{Definition}[Molecules for $p=1$]\label{defn3.1}
	Let $1<q\le\infty$. A function $m$ is called an $(1,q,\beta)_L$-molecule for $H^1_L$ associated to the ball $B$ if for some $\beta>0$
	\begin{enumerate}[\upshape(a)]
		\item $r_B\le\rho_B$
		\item $\displaystyle\NormA{m}_{L^q(U_j(B))} \le 2^{- j \beta}\AbbsA{2^jB}^{1/q-1}$ for all $j=0,1,2,\dots$
		\item $\displaystyle \Big|\int_{\RR{n}} m(x)\,dx\Big|\le \frac{1}{\log(\rho_B/r_B)}$.
	\end{enumerate}
An $(1,q,\beta)_L$-molecule associated to the ball $B$ supported in $B$ is called an $(1,q)_{\log}$-atom.
\end{Definition}

\begin{Definition}[Molecules for $p<1$]\label{defn3.2}
	Let $p\in (0,1)$ and $1<q\le \infty$. A function $m$ is called a $(p,q,\beta,\delta)_L$-molecule for $L$ associated to the ball $B$ if for some $\beta,\delta>0$
	\begin{enumerate}[\upshape(a)]
		\item $r_B\le\rho_B$
		\item $\displaystyle\NormA{m}_{L^q(U_j(B))} \le 2^{- j \beta}\AbbsA{2^jB}^{1/q-1/p}$ for all $j=0,1,2,\dots$
		\item $\displaystyle \Big|\int_{\RR{n}} m(x)\,dx\Big|\le |B|^{1-1/p}\ContainC{\frac{r_B}{\rho_B}}^\delta$.
	\end{enumerate}
A $(p,q,\beta,\delta)_L$-molecule associated to the ball $B$ supported in $B$ is called a $(p,q,\delta)_L$-atom.
\end{Definition}

It is easy to see that a $(p,q)_L$-atom is a multiple of a $(p,q,\beta,\delta)_L$-molecule for any $\delta>0$, $\beta>0$. The next result is an almost-orthogonality type estimate for atoms.
\begin{Lemma}\label{Lem: almost orthog}
	Let $p\in (\frac{n}{n+\sigma_0\wedge 1},1)$, $1<q\le \infty$ and $\delta>0$. Let $a$ be a $(p,q,\delta)_L$-atom for $L$ associated to a ball $B$ as in Definition \ref{defn3.2}. Then for any $\nu < \min \{\sigma_0, \delta\}$, there exists $C>0$ so that
	\begin{align*}
		|e^{-tL}a(x)|\leq C			\dfrac{r_B^{\nu}}{|x-x_B|^{n+\nu}}|B|^{1-1/p},
			\end{align*}
for all $x\in \mathbb{R}^n\backslash 4B.$
\end{Lemma}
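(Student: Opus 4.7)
The plan is to write $e^{-tL}a(x) = \int p_t(x,y)a(y)\, dy$ and estimate this by splitting into two cases according to whether $\sqrt{t}<r_B$ or $\sqrt{t}\ge r_B$, using the heat kernel bounds of Proposition \ref{prop-kernelestimates}. Throughout, since $a$ is supported in $B$ with $\|a\|_q\le |B|^{1/q-1/p}$, H\"older's inequality gives $\|a\|_1\lesi |B|^{1-1/p}$; since $x\notin 4B$ and $y\in B$, one has $|x-y|\sim |x-x_B|$; and I will repeatedly trade Gaussian for polynomial decay via the elementary estimate $t^{-(n+\mu)/2}e^{-|x-x_B|^2/ct}\lesi |x-x_B|^{-(n+\mu)}$, valid for any $\mu\ge 0$.

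When $\sqrt{t}<r_B$, no cancellation is required: the pointwise bound \eqref{hk bound} directly gives $|e^{-tL}a(x)|\lesi t^{\nu/2}|x-x_B|^{-(n+\nu)}|B|^{1-1/p}$, which absorbs into the desired expression using $\sqrt{t}<r_B$. When $\sqrt{t}\ge r_B$, I use the mean-value trick and write $e^{-tL}a(x)=I_1+I_2$ with
\begin{equation*}
I_1 = \int\bigl(p_t(x,y)-p_t(x,x_B)\bigr)a(y)\, dy, \qquad I_2 = p_t(x,x_B)\int a(y)\, dy.
\end{equation*}
For $I_1$, the condition $|y-x_B|\le r_B\le\sqrt{t}$ makes \eqref{hk holder} applicable; fixing $\sigma_1\in(\nu,\sigma_0)$ produces a factor $(r_B/\sqrt{t})^{\sigma_1}$ which, after conversion of the Gaussian factor to polynomial decay, gives $r_B^{\sigma_1}/|x-x_B|^{n+\sigma_1}$; this absorbs into $r_B^\nu/|x-x_B|^{n+\nu}$ because $|x-x_B|>r_B$ and $\sigma_1>\nu$.

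For $I_2$, combining the molecular cancellation $|\int a|\le |B|^{1-1/p}(r_B/\rho_B)^\delta$ with \eqref{hk bound} for $N\ge\delta$ gives
\begin{equation*}
|I_2|\lesi \frac{e^{-|x-x_B|^2/ct}}{t^{n/2}}\Bigl(1+\frac{\sqrt{t}}{\rho_B}\Bigr)^{-N}\Bigl(\frac{r_B}{\rho_B}\Bigr)^\delta|B|^{1-1/p}.
\end{equation*}
Setting $u=\sqrt{t}/\rho_B$ and using $u^\delta(1+u)^{-N}\le 1$ for $N\ge\delta$, one obtains the absorption $(r_B/\rho_B)^\delta(1+\sqrt{t}/\rho_B)^{-N}\lesi (r_B/\sqrt{t})^\delta$, which reduces $I_2$ to the form of $I_1$ with $\sigma_1$ replaced by $\delta$; the same Gaussian-to-polynomial conversion together with $\nu<\delta$ then closes the estimate.

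The main obstacle lies in $I_2$: the three scales $r_B$, $\sqrt{t}$, $\rho_B$ must be balanced so that the cancellation factor $(r_B/\rho_B)^\delta$ and the critical-radius decay $(1+\sqrt{t}/\rho_B)^{-N}$ together furnish the $(r_B/\sqrt{t})^\delta$ needed for the final polynomial decay in $|x-x_B|$. The hypothesis $\nu<\min\{\sigma_0,\delta\}$ is used once per term -- via the choice $\sigma_1\in(\nu,\sigma_0)$ for $I_1$, and via the trading of $\delta$ for $\nu$ in the final step of $I_2$.
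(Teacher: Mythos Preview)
Your proof is correct and follows essentially the same strategy as the paper: split $e^{-tL}a(x)$ into the centered difference $I_1$ (handled via the H\"older estimate \eqref{hk holder}) and the remainder $I_2$ (handled via the weak cancellation of the atom combined with the critical-radius decay in \eqref{hk bound}). The paper carries out the same two-term decomposition, using $\nu$ directly as the H\"older exponent rather than an auxiliary $\sigma_1\in(\nu,\sigma_0)$, and organizing the $I_2$ estimate slightly differently (it passes through $t^{\nu/2}/(\sqrt{t}+|x-x_B|)^{n+\nu}$ and takes $N=\nu$), but the substance is the same. Your additional case split $\sqrt{t}<r_B$ versus $\sqrt{t}\ge r_B$ is in fact a point of extra care: the paper applies \eqref{hk holder} for all $t$ without checking the hypothesis $|y-x_B|\le\sqrt{t}$, whereas you treat the small-$t$ regime directly via the Gaussian bound, which is cleaner.
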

\begin{proof}
	We write
	\begin{align*}
		e^{-tL}a(x)
		= \int_B [p_t(x,y)-p_t(x,x_B)]a(y)\,dy + p_t(x,x_B)\int_B a(y)\,dy =:I+II
	\end{align*}
	Now from the bounds on the heat kernel, and the cancellation for $a$ we have
	\begin{align*}
		II
		&\le \big|p_t(x,x_B)\big| \;\bigg|\int a(y)\,dy\bigg| \\
		&\lesssim
		\frac{e^{-|x-x_B|^2/ct}}{t^{n/2}} \ContainC{1+\frac{\sqrt{t}}{\rho(x)}+\frac{\sqrt{t}}{\rho_B}}^{-N} |B|^{1-1/p}\ContainC{\frac{r_B}{\rho_B}}^\nu \\
		&\lesssim
		\dfrac{t^{\nu/2}}{(\sqrt{t}+|x-x_B|)^{n+\nu}} \ContainC{1+\frac{\sqrt{t}}{\rho(x)}+\frac{\sqrt{t}}{\rho_B}}^{-N} |B|^{1-1/p}\ContainC{\frac{r_B}{\rho_B}}^\nu\\
		&\lesssim \dfrac{r_B^{\nu}}{|x-x_B|^{n+\nu}}|B|^{1-1/p}
	\end{align*}
	by choosing $N=\nu$.
	
	Next by using Proposition \ref{prop-kernelestimates} we write
	\begin{align*}
		I &\lesssim \int_{B}\Big(\f{|y-x_B|}{|x-y|}\Big)^{\nu}\frac{e^{-|x-x_B|^2/ct}}{t^{n/2}} |a(y)|dy\\
&\lesssim \int_{B}\Big(\f{r_B}{|x-x_B|}\Big)^{\nu}\frac{e^{-|x-x_B|^2/ct}}{t^{n/2}}|a(y)|dy\\
&\lesssim \Big(\f{r_B}{|x-x_B|}\Big)^{\nu}\frac{e^{-|x-x_B|^2/ct}}{t^{n/2}} |B|^{1-1/p}\\
&\lesssim \f{r^\nu_B}{|x-x_B|^{n+\nu}}|B|^{1-1/p}.
	\end{align*}
\end{proof}

\begin{Lemma}[Molecules are in $H^p_L$]\label{Lem: molecules in Hp}
 Let $p\in (\frac{n}{n+\sigma_0\wedge 1},1]$ and $1<q\le \infty$. If $m$ is a $(p,q,\beta,\delta)_L$ molecule associated to a ball $B$ with $q>1$ and $\beta>0$ and $\delta>n(1/p-1)$ then $m$ is in $H^p_L$.
\end{Lemma}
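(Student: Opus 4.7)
The plan is to show that any $(p,q,\beta,\delta)_L$-molecule $m$ admits an atomic decomposition $m = \sum_k \lambda_k a_k$ into $(p,q)_L$-atoms with $\sum_k|\lambda_k|^p<\infty$, whereupon the equivalence $H^{p,q}_{L,at}(\mathbb{R}^n) \equiv H^p_L(\mathbb{R}^n)$ stated above delivers $m\in H^p_L$. First I would partition $m$ dyadically along the annuli of its associated ball $B$: setting $m_j := m\chi_{U_j(B)}$, I split $m = \sum_{j\ge 0} m_j$ and let $j_0$ be the smallest integer with $2^{j_0}r_B > \rho_B$ (so $j_0\sim \log_2(\rho_B/r_B)$). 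For $j\ge j_0$ the support of $m_j$ lies on a ball of radius exceeding $\rho_B$ and so cannot be directly treated as an atom; here I would invoke the partition of unity $\{\psi_\alpha\}$ from Lemma \ref{Lem2: rho} and write $m_j = \sum_\alpha \psi_\alpha m_j$, with each piece supported in a critical ball $B_\alpha = B(x_\alpha,\rho(x_\alpha))$. Since $r_{B_\alpha} = \rho_{B_\alpha}$, these are multiples of valid $(p,q)_L$-atoms with no cancellation requirement, and by bounded overlap of the $\psi_\alpha$ combined with Hölder's inequality the corresponding $\ell^p$-coefficient sum is dominated by $\sum_{j\ge j_0} 2^{-jp\beta}<\infty$.

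For the small-scale pieces ($j<j_0$) I would further split $m_j = (m_j - N_j\eta_j) + N_j\eta_j$, where $N_j := \int m_j$ and $\eta_j := \chi_{2^jB}/|2^jB|$. The first summand has mean zero, is supported in $2^jB$ (radius $\le \rho_B$), and has $L^q$-norm $\lesssim 2^{-j\beta}|2^jB|^{1/q-1/p}$, making it a $2^{-j\beta}$-multiple of a $(p,q)_L$-atom with cancellation; this contributes $\sum_{j<j_0}2^{-jp\beta}<\infty$ to the $\ell^p$-sum. The correction $\sum_{j<j_0}N_j\eta_j$ I would rearrange via Abel summation: with $S_j := \sum_{k=0}^j N_k = \int_{2^jB}m$,
\begin{equation*}
\sum_{j=0}^{j_0-1} N_j\eta_j = S_{j_0-1}\,\eta_{j_0-1} + \sum_{j=0}^{j_0-2} S_j\,(\eta_j - \eta_{j+1}).
\end{equation*}
Because $2^{j_0-1}r_B > \rho_B/2$, the anchor $\eta_{j_0-1}$ is supported on a ball on which condition (iv) of Definition \ref{def: atoms} is vacuous, so it is a multiple of an atom directly; each telescoping difference $\eta_j-\eta_{j+1}$ has mean zero, is supported in $2^{j+1}B$, and has $L^q$-norm $\lesssim |2^jB|^{1/q-1}$, hence is again a multiple of a $(p,q)_L$-atom with cancellation.

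The hard part will be verifying the $\ell^p$-summability of the coefficients arising from the telescoping, and this is precisely where the hypothesis $\delta > n(1/p-1)$ enters. Combining the molecule cancellation $|\int m|\le |B|^{1-1/p}(r_B/\rho_B)^\delta$ with the tail estimate $|\int_{(2^jB)^c} m|\lesssim 2^{-j\beta}|2^jB|^{1-1/p}$ (obtained from the $L^q$ size condition via Hölder), I would get $|S_j|\lesssim |B|^{1-1/p}(r_B/\rho_B)^\delta + 2^{-j\beta}|2^jB|^{1-1/p}$. The atom normalization on $2^{j+1}B$ produces a coefficient of order $|S_j|\,|2^jB|^{1/p-1}$; taking the $p$-th power and summing over $j<j_0$ gives a geometric series controlled by $(r_B/\rho_B)^{\delta p - n(1-p)} + \sum_j 2^{-jp\beta}$, which is $O(1)$ precisely because $\delta p > n(1-p)$, i.e. $\delta > n(1/p-1)$. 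The anchor $S_{j_0-1}\,\eta_{j_0-1}$ is handled by the same bound on $|S_{j_0-1}|$, yielding an $O(1)$ coefficient. In the $p=1$ case the role of $(r_B/\rho_B)^\delta$ is played by the factor $1/\log(\rho_B/r_B)$ from Definition \ref{defn3.1}(c), which exactly cancels the $\sim\log(\rho_B/r_B)$ telescoping terms.
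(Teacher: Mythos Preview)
Your proof is correct but follows a genuinely different route from the paper's.

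The paper does \emph{not} decompose $m$ into $(p,q)_L$-atoms. Instead it writes
\[
m = \sum_{j\ge 0}a_j + \sum_{j\ge 0}b_j + a,
\]
where $a_j = m\chi_{U_j(B)} - \alpha_j\chi_j$ and $b_j = N_{j+1}(\chi_{j+1}-\chi_j)$ are mean-zero pieces and $a = \chi_0\int m$ carries the full integral of $m$. It then bounds $\|\mathcal{M}_L m\|_{L^p}$ term by term: for $a_j,b_j$ this uses the heat-kernel H\"older estimate \eqref{hk holder}, and for the remainder $a$ it invokes the almost-orthogonality Lemma~\ref{Lem: almost orthog} when $p<1$ and the duality $(CMO_L)^* = H^1_L$ from Proposition~\ref{prop-CMO} when $p=1$. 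In particular the paper's argument never produces genuine $(p,q)_L$-atoms: the $a_j,b_j$ for large $j$ are supported on balls of radius $\gg\rho_B$, yet their mean-zero property combined with direct maximal-function estimates suffices.

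Your approach, by contrast, is purely atomic: you split at the critical scale $j_0$, use the partition of unity $\{\psi_\alpha\}$ from Lemma~\ref{Lem2: rho} to break the far-away annuli into critical-ball atoms (which need no cancellation), and handle the near part by Abel summation into mean-zero atoms plus a single non-cancelling anchor at scale $2^{j_0-1}r_B\in(\rho_B/2,\rho_B]$. This avoids any heat-kernel computation, any appeal to Lemma~\ref{Lem: almost orthog}, and the duality argument for $p=1$; all of the analytic input is concentrated in the already-established equivalence $H^{p,q}_{L,at}\equiv H^p_L$. The trade-off is that you need the covering lemma \ref{Lem2: rho} (which the paper's proof does not use here) and a slightly more careful count for $\sum_\alpha|\lambda_{j,\alpha}|^p$ in the large-$j$ regime, though this follows from bounded overlap and H\"older as you indicate. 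Both the role of $\delta>n(1/p-1)$ and the $p=1$ logarithmic balance are correctly identified in your argument.
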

\begin{proof}
	
We divide into two cases:

\noindent {\bf Case 1: $p<1$}

We wish to show
	$$ \NormA{\MM_L(m)}_{L^p}\le C.$$
	To do this we set
for $j\ge 0$, $\alpha_j = \int_{U_j(B)} m(x)dx$ and $\chi_j=\f{1}{|U_j(B)|}\chi_{U_j(B)}$. Then we define
$$
a_j(x)=m(x)\chi_{U_j(B)}(x)-\alpha_j \chi_j(x).
$$
If we set $N_j=\sum_{k=j}^\vc \alpha_k$, then we have
\begin{equation}\label{eq-m}
\begin{aligned}
m(x)&=\sum_{j=0}^\vc a_j(x) +\sum_{j=0}^\vc N_{j+1}(\chi_{j+1}(x)-\chi_j(x))+ \chi_0(x)\int m(y)dy\\
&=\sum_{j=0}^\vc a_j(x)+ \sum_{j=0}^\vc b_j(x) + a(x),
\end{aligned}
\end{equation}
which implies
$$
\begin{aligned}
\NormA{\MM_L(m)}^p_{L^p}&\leq \sum_{j=0}^\vc \NormA{\MM_L(a_j)}^p_{L^p}+ \sum_{j=0}^\vc\NormA{\MM_L(b_j)}^p_{L^p} + \NormA{\MM_L(a)}^p_{L^p}\\
&\leq I_1 + I_2 + I_3.
\end{aligned}
$$
We now take care of the terms in $I_1$ first. We note that
\begin{equation}\label{eq1-a_j}
{\rm supp}\,a_j\subset 2^jB, \int a_j =0 \quad \text{and} \quad \|a_j\|_{L^q}\leq C 2^{-j\beta}|2^jB|^{1/q-1/p}.
\end{equation}
Hence, for $x\in \mathbb{R}^n\backslash 2^{j+2}B$ we have
\begin{equation}\label{eq2-a_j}
\begin{aligned}
|e^{-tL}a_j(x)|&=\Big|\int_{2^jB}[p_t(x,y)-p_t(x,x_B)]a_j(y)dy\Big|\\
&\lesssim \int_{2^jB}\Big(\f{|y-x_B|}{|x-y|}\Big)^{\nu}\frac{e^{-|x-x_B|^2/ct}}{t^{n/2}} |a(y)|dy\\
&\lesssim \int_{2^jB}\Big(\f{r_B}{|x-x_B|}\Big)^{\nu}\frac{e^{-|x-x_B|^2/ct}}{t^{n/2}}|a(y)|dy\\
&\lesssim 2^{-j\beta}\Big(\f{2^jr_B}{|x-x_B|}\Big)^{\nu}\frac{e^{-|x-x_B|^2/ct}}{t^{n/2}} |2^jB|^{1-1/p}\\
&\lesssim 2^{-j\beta}\f{(2^jr_B)^\nu}{|x-x_B|^{n+\nu}}|2^jB|^{1-1/p},
\end{aligned}
\end{equation}
where $n(1/p-1)<\nu<\min\{\sigma_0,\delta\}$.

We now observe that
$$ \NormA{\MM_L(a_j)}_{L^p}= \NormA{\MM_L(a_j)}_{L^p(2^{j+2}B)} + \NormA{\MM_L(a_j)}_{L^p(\RR{n}\backslash 2^{j+2}B)}.$$
Then by the $L^q$-boundedness of $\MM$, H\"older's inequality and \eqref{eq1-a_j}we have
	\begin{align*}
		\NormA{\MM_L(a_j)}_{L^p(2^{j+2}B)}
		&\lesssim |2^jB|^{1/p-1/q} \NormA{\MM_L(a_j)}_{L^q(2^{j+2}B)} \\
		&\lesssim |2^jB|^{1-p/q} \NormA{a_j}_{L^q} \\
		&\lesssim |2^{j}B|^{1/p-1/q} 2^{-j\beta}|2^jB|^{1/q-1/p} \le 2^{-j\beta}.
	\end{align*}
On the other hand, by \eqref{eq2-a_j},
\begin{align*}
		\NormA{\MM_L(a_j)}_{L^p(\mathbb{R}^n\backslash 2^{j+2}B)}
		&\lesssim 2^{-j\beta}|2^jB|^{1-1/p}\Big(\int_{\mathbb{R}^n\backslash 2^{j+2}B}\Big(\f{(2^jr_B)^\nu}{|x-x_B|^{n+\nu}}\Big)^p dx\Big)^{1/p}
        \lesssim 2^{-j\beta},
	\end{align*}
as long as $\nu>n(1/p-1)$.

As a consequence, $I_1\lesssim \sum_{j\geq 0}  2^{-j\beta}\leq C$.

Next we also observe that
	\begin{equation}\label{eq1-b_j}
{\rm supp}\,b_j\subset 2^{j+1}B \quad \text{and} \quad \int b_j =0.
\end{equation}
Moreover,
$$
\|b_j\|_{L^q}\leq |N_{j+1}||2^jB|^{1/q-1}.
$$
From the definition of $N_{j+1}$ and H\"older's inequality, we can get that
$$
\begin{aligned}
|N_{j+1}|&\leq \sum_{k\geq j+1}\int_{S_k(B)}|m(y)|dy\leq  \sum_{k\geq j+1}|2^kB|^{1-1/q}\|m\|_{L^q(S_k(B))}\\
&\leq \sum_{k\geq j+1}|2^kB|^{1-1/q}2^{-k\beta}|2^kB|^{1/q-1/p}:=\sum_{k\geq j+1}2^{-k\beta}|2^kB|^{1-1/p}\\
&\leq 2^{-j\beta}\sum_{k\geq j+1}2^{-(k-j)(\beta+n(1-1/p))}|2^jB|^{1-1/p}\\
&\leq C2^{-j\beta}|2^jB|^{1-1/p}.
\end{aligned}
$$
This implies that
\begin{equation}\label{eq2-b_j}
\|b_j\|_{L^q}\leq C2^{-j\beta}|2^jB|^{1/q-1/p}.
\end{equation}

At this stage, an similar argument used to estimate $I_1$, we also arrive at that $I_2\leq C$.

For the last term $I_3$, we proceed as follows:
$$
\|\MM_L(a)\|_{L^p}\leq \|\MM_L(a)\|_{L^p(4B)} +\|\MM_L(a)\|_{L^p(\mathbb{R}^n\backslash 4B)}.
$$
For the first term, using the $L^q$-boundedness of $\MM_L$ and H\"older' inequality to dominate it by
$$
\begin{aligned}
\|\MM_L(a)\|_{L^p(4B)}&\leq C|B|^{1/p-1/q}\|a\|_{L^q}\leq C|B|^{1/p-1/q}|B|^{1/q-1}\Big|\int m(y)dy\Big|\\
&\leq C|B|^{1/p-1}|B|^{1-1/p}\Big(\f{r_B}{\rho_B}\Big)^\delta \leq C.
\end{aligned}
$$
We now apply Lemma \ref{Lem: almost orthog} to see that
$$
\begin{aligned}
\|\MM_L(a)\|_{L^p(\mathbb{R}^n\backslash 4B)}&\leq C|B|^{1-1/p}\Big(\int_{\mathbb{R}^n\backslash 4B} \Big[\dfrac{r_B^{\nu}}{|x-x_B|^{n+\nu}}\Big]^p\Big)^{1/p} \leq C,
\end{aligned}
$$
provided $\nu>n(1/p-1)$.

\noindent {\bf Case 2: $p=1$}

Similarly to \eqref{eq-m} we write
$$
\begin{aligned}
m(x)&=\sum_{j=0}^\vc a_j(x) +\sum_{j=0}^\vc N_{j+1}(\chi_{j+1}(x)-\chi_j(x))+ \chi_0(x)\int m(y)dy\\
&=\sum_{j=0}^\vc a_j(x)+ \sum_{j=0}^\vc b_j(x) + a(x).
\end{aligned}
$$
The argument as in Case 1 has shown that $\sum_{j=0}^\vc a_j(x)+ \sum_{j=0}^\vc b_j(x)$ is in $H^1_L$. It remains to show that $a(x):=\chi_0(x)\int m(y)dy\in H^1_L$. By Proposition \ref{prop-CMO} we claim that
$$
\Big|\int_B a(x)\phi(x)dx\Big|\leq C\|\phi\|_{BMO_L},
$$
for all $\phi\in C^\vc(\mathbb{R}^n)$.

Indeed, we have
$$
\Big|\int_B a(x)\phi(x)dx\Big|\leq \Big|\int_B a(x)(\phi(x)-\phi_B)dx\Big| + |\phi_B|\Big|\int_B a(x)dx\Big|.
$$
By H\"older's inequality we have
$$
\begin{aligned}
 \Big|\int_B a(x)(\phi(x)-\phi_B)dx\Big|&\leq \|a\|_{L^q(B)}\Big(\int_B|\phi(x)-\phi_B|^{q'}dx\Big)^{1/q'}\\
 &\leq C |B|^{1/q-1}|B|^{1/q'}\|\phi\|_{BMO_L}:=C\|\phi\|_{BMO_L}.
\end{aligned}
$$
To dominate the second term we note that by \cite[Lemma 2]{DGMTZ}, we have
$$
|\phi_B|\lesssim \|\phi\|_{BMO_L}\log\Big(\f{\rho_B}{r_B}\Big).
$$
Inserting this into the second term to obtain that
$$
\begin{aligned}
 |\phi_B|\Big|\int_B a(x)dx\Big|&\lesssim \|\phi\|_{BMO_L}\log\Big(\f{\rho_B}{r_B}\Big) \Big|\int_B m(x)dx\Big| \lesssim \|\phi\|_{BMO_L}.
\end{aligned}
$$
This completes our proof.
\end{proof}

\begin{Proposition}[Molecular characterization]
$H^1_L(\mathbb{R}^n)$ is equivalent to the completion of
\begin{equation}\label{eq-molecularHardy 1}
\mathbb{H}^{1,q}_{L,mol}(\RR{n})=\{f: f=\sum_{j=1}^\vc \lambda_j m_j \ \text{in $L^2$},\  m_j \ \text{is an $(1,q,\beta)_L$-molecule and  $\sum_j |\lambda_j|^p<\infty$}\}
\end{equation}
with respect to the norm
$$
\|f\|_{H^{1,q}_{L,mol}(\RR{n})}=\inf\Big\{\sum_j |\lambda_j|: f=\sum_{j=1}^\vc \lambda_j m_j\Big\}.
$$

For $p\in(\frac{n}{n+\sigma_0},1)$ then $H^p_L$ is equivalent to the completion of
\begin{equation}\label{eq-molecularHardy p}
\mathbb{H}^{p,q}_{L,mol}(\RR{n})=\{f: f=\sum_{j=1}^\vc \lambda_j a_j \ \text{in $L^2$},\  m_j \ \text{is an $(p,q,\beta,\delta)_L$-atom and  $\sum_j |\lambda_j|^p<\infty$}\}
\end{equation}
with respect to the norm
$$
\|f\|_{H^{p,q}_{L,mol}(\RR{n})}=\inf\Big\{\Big[\sum_j |\lambda_j|^p\Big]^{1/p}: f=\sum_{j=1}^\vc \lambda_j a_j\Big\}.
$$

\end{Proposition}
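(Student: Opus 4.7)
The plan is to establish the molecular characterization by proving the two-way embedding $H^p_L \hookrightarrow \mathbb{H}^{p,q}_{L,mol} \hookrightarrow H^p_L$, using the atomic characterization $H^p_L \equiv H^{p,q}_{L,at}$ already established in the previous proposition as the intermediary.

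For the embedding $H^p_L \hookrightarrow \mathbb{H}^{p,q}_{L,mol}$, I would show that every $(p,q)_L$-atom $a$ associated to a ball $B$ is (a uniform constant multiple of) a molecule in the sense of Definitions \ref{defn3.1} and \ref{defn3.2}. Condition (a) $r_B \le \rho_B$ is immediate from the atom definition, and condition (b) holds trivially since $\supp a \subset B$ forces $\|a\|_{L^q(U_j(B))}=0$ for $j\ge 1$, while the case $j=0$ follows from $\|a\|_q \le |B|^{1/q-1/p}$. The integral condition (c) splits into two subcases. If $r_B < \rho_B/4$ then $\int a=0$ and the bound is trivial. If $\rho_B/4 \le r_B \le \rho_B$, H\"older's inequality gives $|\int a|\le |B|^{1/q'}\|a\|_q \le |B|^{1-1/p}$, and since $r_B/\rho_B \ge 1/4$ (respectively $\log(\rho_B/r_B)\le \log 4$), the required molecular bound holds up to an absolute constant. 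Thus the atomic decomposition of $f \in H^p_L$ is a valid molecular decomposition with comparable norm.

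For the reverse embedding $\mathbb{H}^{p,q}_{L,mol} \hookrightarrow H^p_L$, I invoke Lemma \ref{Lem: molecules in Hp} (after fixing $\beta>0$ and $\delta > n(1/p-1)$), which gives $\|\mathcal{M}_L m\|_{L^p} \le C$ uniformly for any $(p,q,\beta,\delta)_L$-molecule. If $f = \sum_j \lambda_j m_j$ with $\sum_j |\lambda_j|^p < \infty$, then the $p$-subadditivity of $\|\cdot\|_{L^p}^p$ (for $p\le 1$) yields
\begin{align*}
\|f\|_{H^p_L}^p = \|\mathcal{M}_L f\|_{L^p}^p \le \sum_j |\lambda_j|^p \|\mathcal{M}_L m_j\|_{L^p}^p \le C \sum_j |\lambda_j|^p,
\end{align*}
and taking the infimum over all such representations gives $\|f\|_{H^p_L} \lesssim \|f\|_{H^{p,q}_{L,mol}}$. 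For the case $p=1$ we use the duality argument with $BMO_L$ already employed in the proof of Lemma \ref{Lem: molecules in Hp}.

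The main obstacle I expect is not any of the above steps individually, but rather ensuring the decomposition of $f$ into molecules is compatible with $L^2$-convergence so that $\mathcal{M}_L(\sum_j \lambda_j m_j) \le \sum_j |\lambda_j| \mathcal{M}_L(m_j)$ pointwise is justified. This is handled by the standard argument (as in \cite{DZ2, DZ3}) of first working with finite molecular sums and then passing to limits via the density of $H^{p,q}_{L,at,\mathrm{fin}}$ in $H^p_L$ established in Proposition \ref{prop equiv finite atom}. Once convergence is secured, the chain $H^p_L \equiv H^{p,q}_{L,at} \hookrightarrow \mathbb{H}^{p,q}_{L,mol} \hookrightarrow H^p_L$ closes, yielding the norm equivalence.
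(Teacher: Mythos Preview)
Your proposal is correct and follows essentially the same route as the paper, which simply states that the result follows by combining Lemma~\ref{Lem: almost orthog}, Lemma~\ref{Lem: molecules in Hp}, and the atomic characterization of $H^p_L$. You have merely spelled out the two embeddings (atoms are molecules; molecules lie in $H^p_L$ with uniform norm bound) in more detail than the paper does.
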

\begin{proof}
	Combining the Lemmas \ref{Lem: almost orthog} and \ref{Lem: molecules in Hp}  together with the atomic characterization of $H^p_L(\mathbb{R}^n)$, we obtain this proposition.
\end{proof}

\subsection{Campanato spaces}\label{sec: BMO}

We now recall the definition of Campanato spaces associated to the Schr\"odinger operators.
\begin{Definition}
	Let $\alpha\in[0,1)$. We set
	$$ \CMS{\alpha} =\big\{ f\in L^1_{loc} : \NormA{f}_{\CMS{\alpha}}<\infty\big\}$$
	where $\NormA{f}_{\CMS{\alpha}}$ is the infimum of all $C>0$ such that
	\begin{align*}
	\f{1}{|B|^{1+\alpha/n}}\int_{B}\AbbsA{f-f_{B}} \le C
	\end{align*}
 for all balls $B$, and
	\begin{align*}
	\f{1}{|B|^{1+\alpha/n}}\int_{B} |f| \le C 
	\end{align*}
	for all balls $B$ with $r_B \geq  \rho_B$.
	
\end{Definition}
Note that in the particular case when $\alpha=0$, the Campanato space $\CMS{\alpha}$ turns out to be the BMO space $BMO_L$ which introduced in \cite{DGMTZ}. For the general case when $\alpha\in(0,1)$, these spaces were first introduced in \cite{BHS1} to consider the boundedness of generalized fractional integrals $L^{-\gamma/2}, \gamma>0$ related to  Schr\"odinger operators whose potentials satisfy certain reverse H\"older inequality. Recently, the theory of generalized Morrey-Campanato spaces associated to admissible functions has been investigated in \cite{YYZ, YYZ1}. These spaces include the Campanato type spaces in various settings of Schr\"odinger operators such as Schr\"odinger operators, degenerate Schr\"odinger operators on $\mathbb{R}^n$ and Schr\"odinger operators on Heisenberg groups and connected and simply connected nilpotent Lie groups.

It is clear from their definitions that $\CMS{\alpha}\subset\CMS{}$ and that for $\alpha=0$ we have $\CMS{\alpha}=BMO_L$.
Furthermore for $\alpha>1$, the spaces $\CMS{\alpha}$ contain only constant functions. They also coincide with the space of Lipschitz continuous functions. Indeed if we define $\Lip{\alpha}_L$ to be the space of continuous functions $f$ for which
$$ \NormA{f}_{\Lip{\alpha}_L} := \sup_{x\ne y} \frac{|f(x)-f(y)|}{|x-y|^\alpha} + \sup_{x\in\RR{n}} |\rho(x)^{-\alpha}f(x)|$$
is finite, then $\CMS{\alpha}$ and $\Lip{\alpha}_L$ coincide for all $0<\alpha\le 1$ with equivalent norms. See for example \cite{BHS, YYZ, YYZ1}.

It is important to note that the Campanato spaces are the duals of the Hardy spaces. In fact, in the case $p=1$, it was proved in \cite{DGMTZ} that $(H^1_L)^* = BMO_L$. For $p\in (\frac{n}{n+1},1)$, we have
\begin{equation}\label{DualHardyBMO}
(H^p_L)^* = \CMS{n(\frac{1}{p}-1)}.
\end{equation}
See for example \cite{YYZ}. For the predual space of the Hardy spaces  $H^1_L$ we have the following result in \cite[Theorem 4.1]{K}.

\begin{Proposition}\label{prop-CMO}
	Let $CMO_L$ be  the closure of $C^\infty(\mathbb{R}^n)$ in $BMO_L$. Then, $H^1_L$ is the dual space of $CMO_L$.
\end{Proposition}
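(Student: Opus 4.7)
The plan is to identify $(CMO_L)^\ast$ with $H^1_L$ via the bilinear pairing $\langle f,g\rangle = \int fg$, following the classical Coifman--Weiss template for the identification $(VMO)^\ast = H^1$, adapted to the Schr\"odinger setting.

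I would first establish the forward inclusion $H^1_L \hookrightarrow (CMO_L)^\ast$. This is immediate from the duality \eqref{DualHardyBMO} with $p=1$: since $CMO_L$ is by construction a closed subspace of $BMO_L$, each $f \in H^1_L$ induces a bounded functional $\ell_f$ on $CMO_L$ satisfying $\|\ell_f\|_{(CMO_L)^\ast} \leq \|f\|_{H^1_L}$, and injectivity of $f \mapsto \ell_f$ follows from $C^\infty_c \subset CMO_L$.

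For surjectivity, given $\Phi \in (CMO_L)^\ast$, I would construct a representative $f$ in two stages. First, on each ball $B$ the embedding $C^\infty_c(B) \hookrightarrow CMO_L$ is continuous with respect to the sup norm, because $\|g\|_{BMO_L}$ is controlled by $\|g\|_{L^\infty}$ (up to a $B$-dependent constant) whenever $g$ is supported in $B$. By density and the Riesz representation theorem, $\Phi|_{C^\infty_c(B)}$ corresponds to some $f_B \in L^1(B)$, and these patch consistently into $f \in L^1_{loc}(\mathbb{R}^n)$ with $\Phi(g) = \int fg$ for every $g \in C^\infty_c(\mathbb{R}^n)$. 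Second, to place $f$ in $H^1_L$, I would test $\Phi$ against continuous $(1,\infty)_L$-atoms: a continuous atom $a$ lies in $CMO_L$ since mollification approximates it in $L^\infty$ and hence in $BMO_L$, so $|\int fa| = |\Phi(a)| \leq \|\Phi\|\,\|a\|_{BMO_L} \lesssim \|\Phi\|$. Extending linearly to finite atomic combinations and invoking Proposition \ref{prop equiv finite atom} (equivalence of the finite atomic norm with the $H^1_L$-norm), one would then produce an atomic decomposition of $f$ with $\ell^1$-summable coefficients by a weak-$\ast$ compactness argument applied to truncations $f\chi_{B_n}$. This yields $f \in H^1_L$ with $\|f\|_{H^1_L} \lesssim \|\Phi\|$, and the density of $C^\infty_c$ in $CMO_L$ gives $\Phi = \ell_f$ on all of $CMO_L$.

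The main obstacle will be the final step of surjectivity: the naive bound $|\Phi(h)| \leq \|\Phi\|\,\|h\|_{BMO_L}$ for finite atomic $h$ would only place $f$ in $BMO_L$, whereas $H^1_L$-membership requires the sharper control $|\Phi(h)| \lesssim \|\Phi\|\sum_j |\lambda_j|$ together with a weak-$\ast$ compactness argument extracting an actual atomic decomposition from the approximations. In the Schr\"odinger setting, one must additionally ensure that the atomic decomposition respects the localization constraint $r_B \leq \rho_B$ of Definition \ref{def: atoms}; this can be handled by the partition of unity from Lemma \ref{Lem2: rho} together with the standard arguments for local Hardy spaces in the style of \cite{YY}.
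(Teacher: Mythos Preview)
The paper does not prove this proposition at all; it simply quotes it as \cite[Theorem 4.1]{K}. So there is no argument in the paper to compare your proposal against, and any self-contained proof you produce would already go beyond what the authors do.

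That said, your surjectivity argument contains a genuine gap. The second stage --- ``test $\Phi$ against continuous $(1,\infty)_L$-atoms'' --- is pointed in the wrong direction. An atom $a$ associated to a ball $B$ satisfies $\|a\|_{H^1_L}\lesssim 1$, but its $BMO_L$ norm is of order $|B|^{-1}$, not uniformly bounded; hence the estimate $|\Phi(a)|\le \|\Phi\|\,\|a\|_{BMO_L}\lesssim \|\Phi\|$ is false. Even if it were true, a uniform bound on $\int fa$ over all atoms $a$ is precisely the condition that $f$ defines a bounded functional on $H^1_L$, i.e.\ $f\in BMO_L$, not that $f\in H^1_L$. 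You acknowledge this in your final paragraph, but the proposed fix --- ``the sharper control $|\Phi(h)|\lesssim \|\Phi\|\sum_j|\lambda_j|$'' --- is exactly the statement that $\Phi$ extends boundedly to $H^1_L$, which is what you are trying to prove; no mechanism is given for obtaining it. The subsequent ``weak-$\ast$ compactness argument applied to truncations $f\chi_{B_n}$'' is not an argument: truncating $f$ does not produce an atomic decomposition, and nothing you have established controls $\mathcal{M}_L f$ or any equivalent $H^1_L$-quantity.

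A workable route (which is essentially what Ky does) is closer to the Coifman--Weiss argument you invoke but uses a different pairing: one tests $\Phi$ not against atoms but against carefully chosen elements of $CMO_L$ that detect the $H^1_L$-content of $f$. Concretely, for each cube $Q$ one embeds $L^q_0(Q)$ (mean-zero $L^q$ functions on $Q$) continuously into $CMO_L$ with norm $\lesssim |Q|^{1/q'}$, represents $\Phi$ on each such subspace, patches to obtain $f\in L^{q'}_{\mathrm{loc}}$, and then uses a stopping-time / Calder\'on--Zygmund type decomposition of $f$ itself (not of test functions) to exhibit an atomic decomposition with $\ell^1$ coefficients controlled by $\|\Phi\|$. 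The essential missing ingredient in your proposal is this last decomposition of $f$; testing against atoms cannot substitute for it.
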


We will summarize some properties involving the ${\rm BMO}_L^\alpha$ spaces.

\begin{Proposition}\label{prop-BMO}
	Let $\alpha\geq 0$ and $p\in [1,\vc)$. Then the following statement holds:
	\begin{enumerate}[(i)]
		\item A function $f$ belongs to the $BMO_L^\alpha$ space if and only if
		\begin{equation}\label{eq1-BMO}
		\sup_{B: {\rm ball}}\Big(\f{1}{|B|^{1+p\alpha/n}}\int_B|f(x)-f_B|^pdx\Big)^{1/p} +\sup_{B: r_B\geq \rho_B}\Big(\f{1}{|B|^{1+p\alpha/n}}\int_B|f(x)|^pdx\Big)^{1/p}<\vc.
		\end{equation}
		Moreover, the left hand side of (\ref{eq1-BMO}) is comparable with $\|f\|_{BMO_L^\alpha}$.
		
		\item For all balls $B:=B(x_0, r)$ with $r<\rho(x_0)$ and $f\in BMO_L^\alpha$, we have
		$$
		\f{1}{|B|^{1+\alpha/n}}\int_{B}|f(x)|dx\lesi \begin{cases}
		\Big(\f{\rho(x_0)}{r}\Big)^{\alpha } \|f\|_{BMO_L^\alpha}, &\alpha>0\\
		\Big[1+\log\Big(\f{\rho(x_0)}{r}\Big)\Big] \|f\|_{BMO_L^\alpha}, &\alpha=0.
		\end{cases}
		$$
		\item For all $x \in \RR{n}$ and $0 < r_1 < r_2$,
		$$
		|f_{B(x,r_1)}-f_{B(x,r_2)}|\lesi \begin{cases}
		\Big(\f{r_2}{r_1}\Big)^{\alpha }|B(x,r_1)|^{\alpha/n} \|f\|_{BMO_L^\alpha}, &\alpha>0\\
		\Big[1+\log\Big(\f{r_2}{r_1}\Big)\Big]\|f\|_{BMO_L}, &\alpha=0.
		\end{cases}
		$$
	\end{enumerate}
\end{Proposition}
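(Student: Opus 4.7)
The plan is to establish the three parts in the order (iii) $\to$ (ii) $\to$ (i), since the telescoping estimate of (iii) is the main tool for (ii), and (ii) in turn will be useful for (i).

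For part (iii), I would use the standard telescoping argument. Pick the smallest integer $N$ with $2^N r_1 \ge r_2$, so $N\sim 1+\log_2(r_2/r_1)$. Writing
\begin{align*}
f_{B(x,r_1)}-f_{B(x,r_2)} = \bigl(f_{B(x,r_1)}-f_{B(x,2^N r_1)}\bigr)+\bigl(f_{B(x,2^N r_1)}-f_{B(x,r_2)}\bigr),
\end{align*}
the first difference telescopes as $\sum_{k=0}^{N-1}(f_{B(x,2^k r_1)}-f_{B(x,2^{k+1} r_1)})$, and each summand is $\lesssim |B(x,2^{k+1}r_1)|^{\alpha/n}\|f\|_{BMO_L^\alpha}$ by applying the defining oscillation inequality over $B(x,2^{k+1}r_1)$. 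The final correction term is handled identically by noting that $B(x,r_2)\subset B(x,2^N r_1)$ with comparable measures. Summing yields the geometric growth $(r_2/r_1)^\alpha|B(x,r_1)|^{\alpha/n}$ for $\alpha>0$ and the linear $N\sim 1+\log(r_2/r_1)$ term when $\alpha=0$.

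For part (ii), let $B=B(x_0,r)$ with $r<\rho(x_0)$ and set the critical ball $B^\ast:=B(x_0,\rho(x_0))$, for which $r_{B^\ast}=\rho_{B^\ast}$ so the large-ball clause of the definition applies. Then
\begin{align*}
\frac{1}{|B|}\int_B|f| \;\le\; \frac{1}{|B|}\int_B|f-f_B| \;+\; |f_B-f_{B^\ast}| \;+\; |f_{B^\ast}|.
\end{align*}
The first term is $\lesssim|B|^{\alpha/n}\|f\|_{BMO_L^\alpha}$ by definition; the second is $\lesssim(\rho(x_0)/r)^\alpha|B|^{\alpha/n}\|f\|_{BMO_L^\alpha}$ (or the logarithmic version when $\alpha=0$) by part (iii); the third is $\lesssim |B^\ast|^{\alpha/n}\|f\|_{BMO_L^\alpha}=\rho(x_0)^\alpha\|f\|_{BMO_L^\alpha}$ by the large-ball part of the definition. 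Dividing through by $|B|^{\alpha/n}$ and taking the maximum exponent gives the stated bounds in the two regimes.

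For part (i), the direction $\|f\|_{BMO_L^{\alpha,1}}\lesssim \|f\|_{BMO_L^{\alpha,p}}$ is immediate from H\"older's inequality in both the oscillation and the large-ball clauses. For the reverse direction, I would first establish a John--Nirenberg type inequality for the oscillation on small balls via a local Calder\'on--Zygmund stopping-time argument performed inside balls $B$ with $r_B\le\rho_B$, producing exponentially decaying distribution estimates $|\{x\in B:|f(x)-f_B|>\lambda\}|\lesssim e^{-c\lambda/(|B|^{\alpha/n}\|f\|)}|B|$, from which the $L^p$ version follows. For balls with $r_B\ge\rho_B$, split
\begin{align*}
\Bigl(\frac{1}{|B|}\int_B|f|^p\Bigr)^{1/p} \le \Bigl(\frac{1}{|B|}\int_B|f-f_B|^p\Bigr)^{1/p} + |f_B|,
\end{align*}
bound the first summand by John--Nirenberg and the second by the already-assumed $L^1$ large-ball clause. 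The main obstacle is the John--Nirenberg step: the usual proof must be localized to subballs of scale at most $\rho_B$, so that the stopping-time procedure never leaves the scale where the oscillation hypothesis is known to apply. This is where most of the real technical work sits; once it is in place, the rest of (i) is a short interpolation between the two clauses of the definition.
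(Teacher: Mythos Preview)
Your argument is essentially correct and follows the standard route one finds in the literature. Note, however, that the paper does not give its own proof of this proposition: it simply refers the reader to Lemma~2.2 and Lemma~2.4 of \cite{YYZ}. So there is nothing to compare against except the cited source, and your outline is in line with how such results are typically proved there.

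One small correction to your discussion of part~(i): the oscillation clause in the definition of $BMO_L^\alpha$ is imposed over \emph{all} balls, not only those with $r_B\le\rho_B$. Hence the John--Nirenberg (or Campanato, for $\alpha>0$) argument for the first supremum in \eqref{eq1-BMO} is the classical one and requires no localization to scales below $\rho_B$; the critical function $\rho$ plays no role in that step. The localization concern you raise would only arise if the oscillation hypothesis were restricted to small balls, which it is not. With that clarification, the rest of your plan for (i)---classical John--Nirenberg for the oscillation term, then the split $|f|\le|f-f_B|+|f_B|$ for the large-ball term---goes through without difficulty.
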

\begin{proof}
	For the proof, we refer the reader to Lemma 2.2 and Lemma 2.4 in \cite{YYZ}.
\end{proof}

\section{Proof of the $T1$ criterions for $H^p_L(\mathbb{R}^n)$ and $BMO^\alpha_L(\mathbb{R}^n)$}

Before coming to the proof of the main result, we would like to give the definition of $T^*f$ for $f\in BMO_L^\alpha, 0<\alpha\leq 1$ and $T\in GCZO(\gamma,\theta)$. Let $K^*(x,y)$ be an associated kernel of $T^*$. Following the ideas in \cite{MSTZ}, we can define $T^*f$ for $f\in BMO_L^\alpha, 0<\alpha\leq 1$. For the sake of convenience, we just sketch it here.

Fix $x_0\in \mathcal{R}^n$. For  $R>\rho(x_0)$ we define
$$
T^*f(x)=T^*(f\chi_{B(x_0,R)})(x)+\int_{B(x_0, R)^c}K^*(x,y)f(y)dy.
$$
Since $f\chi_{B(x_0,R)}\in L^{\theta'}_c$ and $T^*$ is bounded on $L^{\theta'}$, the first term is well-defined.

For the second term, using \eqref{cond1}, Proposition \ref{prop-BMO} and Lemma \ref{Lem1: rho} (i) we can dominate the second term by
$$
CR^\alpha\|f\|_{BMO_L^\alpha}.
$$
Similarly to  \cite{MSTZ}, we can show that $T^*f$ is independent of $R$ in the sense that if $B(x_0,R)\subset B(x_0',R')$ then the definition using $B(x_0',R')$ coincides with the one using $B(x_0,R)$ for a.e. $B(x_0,R)$.

Since $1\in BMO_L^\alpha$, the definition above is valid for $T^*1$.

Now for $f\in BMO_L^\alpha, 0<\alpha\leq 1$. For any ball $B$ we have
$$
f=(f-f_B)\chi_{4B}+(f-f_B)\chi_{(4B)^c}+f_B:=f_1+f_2+f_3.
$$
Arguing similarly to \cite{MSTZ}, we also obtain that
$$
T^*f=T^*f_1+T^*f_2+T^*f_3.
$$

We are now ready to give the proof of Theorem \ref{Th: CZO}.

\begin{proof}[\bf Proof of Theorem \ref{Th: CZO}]

\noindent{\underline{Proof of ``if part'' for (a) and (b)}}

For $p\in (0, 1]$ with $\gamma>n(1/p-1)$, it suffices to show that $T$ maps $(p,\theta)_L$-atoms into molecules $(p,\theta,\epsilon)_L$-molecules as $p=1$ and into $(p,\theta,\epsilon,\delta)_L$-molecules as $p<1$ with $0<\epsilon<\gamma-n(1/p-1)$ and $\delta=n(1/p-1)$.

Indeed, let $a$ be an $(p,\theta)_L$-atom associated to a ball $B$.
	We first prove the size condition on $Ta$. If $j=0,1,2,3$ then $L^\theta$-boundedness of $T$ implies that
	$$ \NormA{Ta}_{L^\theta(4B)}\lesssim \NormA{a}_{L^\theta} \lesssim \AbbsA{B}^{1/\theta-1/p}. $$
For $j\geq 4$ we consider two cases:

\noindent{\emph{Case 1: $r_B<\rho_B/4$.}}

In this situation by using the cancelation property, Minkowski's inequality and \eqref{cond2} we can write
 	\begin{align*}
		\NormA{Ta}_{L^\theta(U_j(B))}
		&= \ContainC{\int_{U_j(B)}\ContainC{\int \AbbsA{K(x,y)-K(x,x_B)}\AbbsA{a(y)}\,dy}^\theta \,dx}^{1/\theta}  \\
		&\leq \int_B\Big(\int_{U_j(B)} \Big|K(x,y)-K(x,x_B)\Big|^\theta dx \Big)^{1/\theta} |a(y)|dy \\
        &\lesssim 2^{-j\gamma}|2^jB|^{-1/\theta'}\|a\|_{L^1}\\
        &\lesssim 2^{-j\gamma}|2^jB|^{-1/\theta'}|B|^{1-1/p}=2^{-j[\gamma-n(1-1/p)]}|2^jB|^{-1/p\theta'}|2^jB|^{1/\theta-1/p}.
	\end{align*}

\noindent{\emph{Case 2: $\rho_B/4\leq r_B\leq \rho_B$.}}

In this situation, by Minkowski's inequality we write
	\begin{align*}
		\NormA{Ta}_{L^\theta(U_j(B))}
		&= \ContainC{\int_{U_j(B)} \Big|\int_B K(x,y) a(y)\,dy\Big|^\theta dx}^{1/\theta} \leq\int_B\Big(\int_{U_j(B)}|K(x,y)|^\theta dx\Big)^{1/\theta}|a(y)|dy.
		\end{align*}
This along with \eqref{cond1} yields that, for $N>\gamma$,
$$
\begin{aligned}
		\|Ta\|_{L^\theta(U_j(B))}
		&\lesssim |2^jB|^{-1/\theta'}\Big(\f{\rho_B}{2^jr_B}\Big)^N \|a\|_{L^1}
    \lesssim 2^{-j[\gamma-n(1/p-1)]}|2^jB|^{1/\theta-1/p}.
\end{aligned}
$$

	To obtain the cancellation for $Ta$, we make the following split
	\begin{align*}
	\Big|\int Ta(x)\,dx|big|
	&=\Big|\int a(x) T^*1(x)\,dx\Big|\\
	&\le \int \AbbsA{a(x)}\AbbsA{T^*1(x)-(T^*1)_B}\,dx +\Big|\int a(x)\,dxBig|\AbbsA{(T^*1)_B} \\
	&=:I+II
\end{align*}

To estimate the first term, we may apply H\"older's inequality to obtain, for $p<1$,
	\begin{align*}
	I
	&\le \NormA{a}_\theta \big\Vert T^*1 - (T^*1)_B\big\Vert_{L^{\theta'}(B)}\\
	&\le |B|^{1-1/p} \ContainC{\fint_B\AbbsA{T^*1(x)-(T^*1)_B}^{\theta'}\,dx}^{1/\theta'} \\
	&\lesssim |B|^{1-1/p} \ContainC{\frac{r_B}{\rho_B}}^{n(1/p-1)}
	\end{align*}
	
	If $p=1$ then
	\begin{align*}
		I\le \ContainC{\fint_B\AbbsA{T^*1(x)-(T^*1)_B}^{\theta'}\,dx}^{1/\theta'}	 \lesssim \frac{1}{\log\big(\frac{\rho_B}{r_B}\big)}
	\end{align*}
To estimate the second term, we note that if $r_B\le \rho_B/4$ then $\int a=0$ and hence $II=0$. Otherwise we have $\rho_B/4\le \frac{r_B}{\rho_B}\le \rho_B$ and therefore
\begin{align*}
II
\le \AbbsA{(T^*1)_B}\Big|\int a(x)\,dx\Big|
\lesssim |B|^{1-1/p}
\lesssim  |B|^{1-1/p} \ContainC{\frac{r_B}{\rho_B}}^{n(1/p-1)}
\end{align*}
If $p=1$ then we argue similarly but use
$$ II\lesssim \ContainC{\frac{r_B}{\rho_B}}^\delta \lesssim \frac{1}{\log\big(\frac{\rho_B}{r_B}\big)} $$
for any $\delta>0$.

\underline{Proof of `only if' of (a)}

We borrow some ideas in \cite{MSTZ}. Assume that $T$ is bounded on $H^1_L$ then from \eqref{DualHardyBMO} $T^*$ is bounded on $BMO_L$. For $x_0\in \RR{n}$ and $0<s\leq \rho(x_0)$ we define
$$
g_{x_0,s}(x)=\chi_{[0,s]}(|x-x_0|)\log\left(\f{\rho(x_0)}{s}\right)+\chi_{(s,\rho(x_0)]}(|x-x_0|)\log\left(\f{\rho(x_0)}{|x-x_0|}\right).
$$
Then we have $g_{x_0,s}\geq 0$ and $\|g_{x_0,s}\|_{BMO_L}\leq C$. See \cite[Lemma 2.5]{MSTZ}.

We now fix $x_0\in \RR{n}$ and $0<s\leq \rho(x_0)/2$. Set $B=B(x_0,s)$ and $g_0(x)=g_{x_0,s}(x)$.

We split $f_0=(f_0-(f_0)_B)\chi_{4B}+(f_0-(f_0)_B)\chi_{(4B)^c}+(f_0)_B:= f_1+f_2+(f_0)_B$ which implies that
$$
(f_0)_BT^*1(y)=T^*f_0(y)-T^*f_1(y)-T^*f_2(y).
$$
Therefore,
$$
\begin{aligned}
(f_0)_B\log\ContainC{\frac{\rho(x_0)}{s}}&\Big(\fint_B\AbbsA{T^*1(y)-(T^*1)_B}^{\theta'}\,dy\Big)^{1/\theta'}\\
&\leq \sum_{i=0,1,2}\log\ContainC{\frac{\rho(x_0)}{s}}\Big(\fint_B\AbbsA{T^*f_i(y)-(T^*f_i)_B}^{\theta'}\,dy\Big)^{1/\theta'}\\
&:=I_0+I_1+I_2.
\end{aligned}
$$
From the $BMO_L$-boundedness of $T^*$ and Proposition \ref{prop-BMO} we obtain
$$
\begin{aligned}
I_0&\lesi \log\ContainC{\frac{\rho(x_0)}{s}}\|T^*f_0\|_{BMO_L}\lesi  \log\ContainC{\frac{\rho(x_0)}{s}}\|f_0\|_{BMO_L}\\
&\lesi \log\ContainC{\frac{\rho(x_0)}{s}}:=(f_0)_B.
\end{aligned}
$$
For the contribution of $I_1$, we have
$$
\begin{aligned}
\Big( \fint_B|T^*f_1(y)-(T^*f_1)_B|^{\theta'}dy\Big)^{1/\theta'}
&\leq 2\left(\fint_B|T^*f_1(y)|^{\theta'}dy\right)^{1/\theta'}\\
&\lesi |B|^{-1/\theta'} \|T^*f_1\|_{L^{\theta'}}.
\end{aligned}
$$
This in combination with the $L^{\theta'}$-boundedness of $T^*$ and Proposition \ref{prop-BMO} implies that
$$
\Big( \fint_B|T^*f_1(y)-(T^*f_1)_B|^{\theta'}dy \Big)^{1/\theta'}\lesi \|f_0\|_{BMO_L}.
$$
Hence, $I_1\lesi (f_0)_B$.

For the last term $I_2$, using H\"older's inequality and \eqref{cond2} we have for $y\in B$
$$
\begin{aligned}
|T^*&f_2(y)-(T^*f_2)_B|\\
&\leq \f{1}{|B|}\int_B\int_{(4B)^c}|K(z,y)-K(z,u)|\,|f_0(z)-(f_0)_B|dzdu\\
&\leq \sum_{k\geq 1}\f{1}{|B|}\int_B\left(\int_{S_k(B)}|K(z,y)-K(z,u)|^\theta dz\right)^{1/\theta} \left(\int_{S_k(B)}|f_0(z)-(f_0)_B|^{\theta'}dz\right)^{1/\theta'}du\\
&\leq \sum_{k\geq 1}2^{-k\gamma}|2^kB|^{-1/\theta'} \left(\int_{S_k(B)}|f_0(z)-(f_0)_B|^{\theta'}dz\right)^{1/\theta'}\\
&\leq \sum_{k\geq 1}2^{-k\gamma}\left[ \left(\f{1}{|2^kB|}\int_{2^kB}|f_0(z)-(f_0)_B|^{\theta'}dz\right)^{1/\theta'}+|(f_0)_{2^kB}-(f_0)_{B}|\right]
\end{aligned}
$$
which along with Proposition \ref{prop-BMO} yields that
$$
\begin{aligned}
|T^*f_2(y)-(T^*f_2)_B|&\leq \sum_{k\geq 1}2^{-k\gamma}\log(2^k)\|f_0\|_{BMO_L}\leq C.
\end{aligned}
$$
Hence, $I_2\lesi (f_0)_B$.

Taking the estimates of $I_0, I_1$ and $I_2$ into account implies that
$$
\log\ContainC{\frac{\rho(x_0)}{s}}\Big(\barint_B\AbbsA{T^*1(y)-(T^*1)_B}^{\theta'}\,dy\Big)^{1/\theta'}\leq C.
$$
This completes our proof.

\underline{Proof of `only if' of (b)}

Assume that $T$ is bounded on $H^p_L(\mathbb{R}^n)$.  We point out that in Section 3.1 of \cite{MSTZ}, they provided a definition of
$T^*f(x)$ for a.e. $x\in B(x_0,R)$, for $f\in BMO^{n(1/p-1)}_{L}$, $R\geq \rho(x_0)$ and $x_0\in\mathbb{R}^n$. Hence, by Proposition \ref{prop equiv finite atom}, suppose $g=\sum_{j=1}^n a_j \in H^p_L(\mathbb{R}^n)$, where each $a_j$ is an $(p,q)_L$-atom if $q<\infty$ and continuous $(p,q)_L$-atom if $q=\infty$. Then  we obtain that for every $f\in BMO^{n(1/p-1)}_{L}$,
\begin{align*}
\langle T^*f,g  \rangle = \langle f, Tg  \rangle &\lesssim \|f\|_{ BMO^{n(1/p-1)}_{L}} \|Tg\|_{H^p_L(\mathbb{R}^n)} \lesssim \|f\|_{ BMO^{n(1/p-1)}_{L}} \|g\|_{H^p_L(\mathbb{R}^n)}.
\end{align*}
Taking the supremum over all $g$ gives
$$   \|T^*f\|_{ BMO^{n(1/p-1)}_{L}} \lesssim \|f\|_{{ BMO^{n(1/p-1)}_{L}}}. $$
This implies that $T^*$ is bounded on $BMO^{\alpha}_{L}$ with $\alpha=n(1/p-1)$. 

For $x_0\in \RR{n}$ and $0<s<\rho(x_0)$, we define
$$
g_{x_0,s}(x)=\chi_{[0,s]}(|x-x_0|)(\rho(x_0)^\alpha-s^\alpha)+\chi_{(s,\rho(x_0)]}(|x-x_0|)(\rho(x_0)^\alpha-s^\alpha).
$$
We then have $g\geq 0$ and $\|g_{x_0,s}\|_{BMO_L^\alpha}\leq C$. See \cite[Lemma 2.5]{MSTZ}.

The remainder of the proof is similar to that of the ``only if'' part for (a). The difference here is that we must apply for the function $g_{x_0,s}$ instead of $f_0$. Hence, we omit details here.

\textbf{Proof of Theorem \ref{Th: CZO} (c)}

\smallskip
\underline{Proof of `if'}

We first recall the notion of the classical $(p,q,\epsilon)$-molecule for $H^p$ with $\f{n}{n+1}<p\leq 1$. For  $\f{n}{n+1}<p\leq 1\leq q<\vc$ and $\epsilon>0$, a function $m$ is said to be a $(p,q,\epsilon)$-molecule  if there holds
\begin{enumerate}[\upshape(i)]
	\item $\NormA{m}_{L^q(U_j(B))}\le 2^{-j\epsilon} \AbbsA{2^j B}^{1/q-1/p}$ for all $j\ge 0$
	\item $\displaystyle\int m(x)dx=0$.
\end{enumerate}
It is well-known that if $m$ is a $(p,q,\epsilon)$-molecule then $\|m\|_{H^p}\leq C$. Hence, to prove this part it suffices to prove that $T$ maps each $(p,\theta)_L$ atom into $(p,\theta,\epsilon)$ molecule for some $\epsilon>0$.

Indeed, let $a$ be a $(p,\theta)_L$ atom associated to $B$. We consider two cases:  $r_B<\rho_B/4$ and $\rho_B/4\leq r_B\leq \rho_B$. The first case is very standard. Hence, we need to consider the second case $\rho_B/4\leq r_B\leq \rho_B$. 

We first observe that from the condition $T^*1=0$ we have $\displaystyle \int Ta(x)dx=0$. To complete the proof, we need only to show that
\begin{equation}\label{eq1-thm1.2}
\|Ta\|_{L^\theta(U_j(B))}\lesi 2^{-j\epsilon}|2^jB|^{1/\theta-1/p}, \ \ j\geq 0.
\end{equation}
From the $L^\theta$-boundedness of $T$ it can be verified that \eqref{eq1-thm1.2} holds true for $j=0,1,2.$

Fix $N>n(1/p-1)$. For $j\geq 3$ by \eqref{cond1} and Minkowski's inequality we have
$$
\begin{aligned}
\|Ta\|_{L^\theta(U_j(B))}&\leq \left[\int_{U_j(B)} \left|\int_B |K(x,y)a(y)dy\right|^\theta dx\right]^{1/\theta}\\
&\leq  \int_B \left[\int_{U_j(B)}|K(x,y)|^\theta dx\right]^{1/\theta} |a(y)|dy\\
&\lesi |2^jB|^{-1/\theta'}2^{-jN}\|a\|_1\\
&\lesi |2^jB|^{-1/\theta'}2^{-jN}|B|^{1-1/p}:=2^{-j[N-n(1/p-1)]}|2^jB|^{1/\theta-1/p}.
\end{aligned}
$$
This proves \eqref{eq1-thm1.2}.

\smallskip
\underline{Proof of `only if'}
	Assume that $T$ is bounded from $H^p_L$ into $H^p$. Then by duality, $T^*$ maps $BMO^\alpha$ into $\CMS{\alpha}$ continuously with $\alpha=n(1/p-1)$. Then we have
	$$
	\|T^*1\|_{\CMS{\alpha}}\leq C\|1\|_{BMO^\alpha}=0.
	$$
	Hence, $\|T^*1\|_{\CMS{\alpha}}=0$. From the definition of $\CMS{\alpha}$ we have
	$
	\int_B|T^*1| =0
	$
	for all $B=B(x,\rho(x))$ with $ x\in \mathbb{R}^n$. This along with Lemma \ref{Lem2: rho} implies
	$
	\int_{\mathbb{R}^n}|T^*1| =0.
	$
	It follows  that $T^*1=0$.
\end{proof}

\begin{proof}[\bf Proof of Theorem \ref{thm2}]
Since $T\in GCZO^*_\rho(\gamma,\theta')$ implies  that $T^*\in GCZO_\rho(\gamma,\theta)$, the proof of the `if' directions follow from Theorem \ref{Th: CZO} and duality. 

We also observe that the proofs of the `only if' directions are essentially contained in the proofs of the `only if' directions in Theorem \ref{Th: CZO}.
\end{proof}

\section{Proofs of Applications}

In this section we give the proofs of Theorems \ref{thm3 appl1}--\ref{thm6 appl4}. 

\subsection{Laplace transform type multipliers}\label{sec: misc ops}

Suppose $L$ is the Schr\"odinger operator defined as in \eqref{L}. 
%
%
%
%
%
Given a bounded function $a: [0,\vc) \rightarrow \mathbb{C}$, we define the \emph{Laplace transform type multipliers $m(L)$} by
\begin{equation}
\label{eq-Spectral}
m(L)f(x)=\int_0^\vc a(t)Le^{-tL}f(x)dt
\end{equation}
which is bounded on $L^2$. An example are the imaginary powers $m(L)=L^{i\nu}$ given by $a(t) = -\f{1}{\Gamma(i\nu)}t^{-i\nu}$ for $\nu\in\RR{}$.

\begin{proof}[\bf Proof of Theorem \ref{thm3 appl1}]

We now apply Theorem \ref{Th: CZO} to prove Theorem \ref{thm3 appl1}. 

Denote by $m(L)(x,y)$ the associated kernel of $m(L)$. Then it was proved in \cite{MSTZ} that
\begin{Proposition}\label{Kernel-spectral}
	Let $x,y,z\in \RR{n}$ and $N>0$. Then
	\begin{enumerate}[$(a)$]
		\item $\displaystyle |m(L)(x,y)|\leq \f{C}{|x-y|^n}\left(1+\f{|x-y|}{\rho(x)}+\f{|x-y|}{\rho(y)}\right)^{-N}$;
		\item $\displaystyle |m(L)(x,y)-m(L)(x,z)|+|m(L)(y,x)-m(L)(z,x)|\leq C_\delta\f{|y-z|^\delta}{|x-y|^{n+\delta}}$, for all $|x-y|>2|y-z|$ and any $0<\delta<\sigma_0$.
	\end{enumerate} 
\end{Proposition}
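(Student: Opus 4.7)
The plan is to represent the kernel via the semigroup identity
\begin{equation*}
m(L)(x,y) = -\int_0^\infty a(t)\,\partial_t p_t(x,y)\,dt,
\end{equation*}
which follows from $Le^{-tL}=-\partial_t e^{-tL}$, and to reduce both (a) and (b) to pointwise bounds on $\partial_t p_t$. Since $L$ generates an analytic semigroup, I would apply Cauchy's integral formula to $z\mapsto p_z(x,y)$ on a disk $|z-t|\le t/2$ sitting inside the sector of analyticity, together with Proposition \ref{prop-kernelestimates}, to obtain
\begin{equation*}
|\partial_t p_t(x,y)| \lesssim \frac{e^{-|x-y|^2/ct}}{t^{n/2+1}}\ContainC{1+\frac{\sqrt{t}}{\rho(x)}+\frac{\sqrt{t}}{\rho(y)}}^{-N}
\end{equation*}
for every $N>0$, and the analogous regularity bound (applying the Cauchy representation to \eqref{hk holder})
\begin{equation*}
|\partial_t p_t(x,y)-\partial_t p_t(x,z)| \lesssim \ContainC{\frac{|y-z|}{\sqrt t}}^{\delta}\frac{e^{-|x-y|^2/ct}}{t^{n/2+1}}
\end{equation*}
whenever $|y-z|\le\sqrt t$ and $0<\delta<\sigma_0$.

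For part (a), since $a$ is bounded, I would split $\int_0^\infty|\partial_t p_t(x,y)|\,dt$ at $t=|x-y|^2$. On $t\le|x-y|^2$ I would replace $e^{-|x-y|^2/ct}$ by $C_k(t/|x-y|^2)^k$ for $k$ large enough to absorb the $t^{n/2+1}$ denominator, producing the expected $|x-y|^{-n}$. On $t>|x-y|^2$ I would discard the Gaussian and use the $\rho$-decay factor with $N$ large, noting that for such $t$ we have $\sqrt t/\rho(x)\gtrsim |x-y|/\rho(x)$, so the resulting bound is $|x-y|^{-n}\ContainC{1+|x-y|/\rho(x)+|x-y|/\rho(y)}^{-N}$ as claimed.

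For part (b), I would estimate $|m(L)(x,y)-m(L)(x,z)|$ by splitting the $t$-integral at $t=|y-z|^2$. On $t\ge|y-z|^2$ the Hölder derivative bound applies and, after a secondary split at $t=|x-y|^2$, gives a contribution dominated by $|y-z|^\delta/|x-y|^{n+\delta}$. On $t<|y-z|^2<|x-y|^2/4$ I would use the triangle inequality together with the size bound for $\partial_t p_t$; here the Gaussian $e^{-|x-y|^2/ct}$ is small and, after estimating $e^{-|x-y|^2/ct}\lesssim (t/|x-y|^2)^{(n+\delta)/2+1}$, again gives $|y-z|^\delta/|x-y|^{n+\delta}$ upon integration. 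The estimate for $|m(L)(y,x)-m(L)(z,x)|$ follows by symmetry of $p_t$ since $L$ is self-adjoint.

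The main obstacle is the derivation of the pointwise bounds on $\partial_t p_t$ from those on $p_t$; once this analytic-semigroup step is in place the rest amounts to routine two-regime splitting of the $t$-integral. A secondary technical point is the tracking of which tail in $t$ dominates: the $\rho$-factor in (a) is produced by the large-$t$ tail, while in (b) the $\rho$-dependence is harmlessly discarded because we only need the plain Hölder estimate. Note also that, in contrast to $|y-z|/\sqrt t\le 1$, the regime $|y-z|>\sqrt t$ forces $t<|x-y|^2/4$, so the Gaussian decay absorbs the loss and no $\rho$-factor is needed in (b).
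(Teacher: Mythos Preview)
The paper does not prove this proposition; it is quoted from \cite{MSTZ}. Your outline is essentially the standard derivation (and presumably close to what \cite{MSTZ} does): pass to the time derivative of $p_t$ via analyticity of the semigroup, then split the $t$-integral at the scale $|x-y|^2$ (and, for (b), additionally at $|y-z|^2$).

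One point in part~(a) needs tightening. Your treatment of the regime $t\le|x-y|^2$ yields only $|x-y|^{-n}$, without the extra factor $\bigl(1+|x-y|/\rho(x)+|x-y|/\rho(y)\bigr)^{-N}$; adding this to the large-$t$ contribution gives only $|x-y|^{-n}$, not the claimed bound. The fix is routine: retain the factor $(1+\sqrt t/\rho)^{-N}$ in the small-$t$ integrand and use the elementary inequality
\[
\Bigl(1+\frac{|x-y|}{\rho}\Bigr)\le\Bigl(1+\frac{\sqrt t}{\rho}\Bigr)\Bigl(1+\frac{|x-y|}{\sqrt t}\Bigr),
\]
together with $(1+|x-y|/\sqrt t)^N e^{-|x-y|^2/ct}\lesssim e^{-|x-y|^2/c't}$, to pull $(1+|x-y|/\rho)^{-N}$ outside before integrating in $t$. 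A smaller technicality: Proposition~\ref{prop-kernelestimates} is stated only for real $t$, so the Cauchy-formula step on a disk $|z-t|\le t/2$ needs the complex-time analogues of \eqref{hk bound}--\eqref{hk holder}; these are known for $L=-\Delta+V$ with $V\in\RH_\sigma$, but are not literally what Proposition~\ref{prop-kernelestimates} says.
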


	Fix $\f{n}{n+\sigma_0\wedge 1}<p\leq 1$ and take $\delta<\sigma_0\wedge 1$ so that  $\f{n}{n+\delta}<p\leq 1$. From Proposition \ref{Kernel-spectral}, $m(L)\in GCZO(\delta,2)$. Hence, in the light of Theorem \ref{Th: CZO} and the fact that $m(L)^*=\overline{m}(L)$ it suffices to prove that
	\begin{equation}\label{eq1-spectral}
	\log\ContainC{\frac{\rho_B}{r_B}}\Big(\barint_B\AbbsA{m(L)1(y)-(m(L)1)_B}^{2}\,dy\Big)^{1/2} \le C,
	\end{equation}
	\begin{equation}\label{eq2-spectral}
	\ContainC{\frac{\rho_B}{r_B}}^{n(1/p-1)}\Big(\barint_B\AbbsA{m(L)1(y)-(m(L)1)_B}^{2}\,dy\Big)^{1/2} \le C
	\end{equation}
	for every ball $B$ with $r_B\le\tfrac{1}{2}\rho_B$.
	
	Indeed, we have by Minkowski's inequality
	$$
	\begin{aligned}
	\Big(\barint_B\AbbsA{m(L)1(y)-(m(L)1)_B}^{2}\,dy\Big)^{1/2}&\lesi \Big(\barint_B\left|\barint_B m(L)1(y)-m(L)1(z)dz\right|^{2}\,dy\Big)^{1/2}\\
	&\lesi \barint_B\Big(\barint_B\left| m(L)1(y)-m(L)1(z)\right|^{2}dy\Big)^{1/2}\,dz.
	\end{aligned}
	$$
	It was proved in the proof of \cite[Proposition 4.11]{MSTZ} that
	$$
	|m(L)1(y)-m(L)1(z)|\lesi \left(\f{r_B}{\rho_B}\right)^\delta\log\left(\f{\rho_B}{r_B}\right).
	$$
	Hence, 
	$$
	\Big(\barint_B\AbbsA{m(L)1(y)-(m(L)1)_B}^{2}\,dy\Big)^{1/2}\lesi \left(\f{r_B}{\rho_B}\right)^\delta\log\left(\f{\rho_B}{r_B}\right).
	$$
	This proves \eqref{eq1-spectral} and \eqref{eq2-spectral}.
\end{proof}

\subsection{Riesz transforms $\nabla L^{-1/2}$ and $\nabla^2L^{-1}$}
Suppose $L$ is the Schr\"odinger operator defined as in \eqref{L}. 
For $i,j=1,\dots,n$, the $i$-th Riesz transform is defined by
$$
\mathcal{R}_i=\partial_{x_i}L^{-1/2}=\f{1}{\pi}\int_0^\vc \partial_{x_i}e^{-tL}\f{dt}{\sqrt{t}},
$$
and the 
$i,j$-th Riesz transform is defined by
$$
\mathcal{R}_{ij}=\partial_{x_i}\partial_{x_j}L^{-1}=\int_0^\vc \partial_{x_i}\partial_{x_j}e^{-tL}\,dt.
$$
For simplicity we shall write $\nabla$ and $\nabla^2$ for $\partial_{x_i}$ and $\partial_{x_i}\partial_{x_j}$ respectively,  and set $\RT:=\nabla L^{-1/2}$ and $\RRT:=\nabla^2 L^{-1}$.

\begin{proof}[\bf Proof of Theorem \ref{thm4 appl2}]
We first consider $\RT$.
	Now $\RT\in GCZO(\delta,2)$ for any $0<\delta<\min\{\sigma_0,1\}$. Indeed, it is well-known that $\RT$ is bounded on $L^2$. The condition \eqref{cond1} and \eqref{cond2} follow from \cite[Lemma 7]{BHS} and \cite{GLP}, respectively. On the other hand, it is obvious $\RT^*1=0$. The conclusion of the theorem follows immediately by applying Theorem \ref{Th: CZO} (c).

We now consider $\RRT$.

	We will show that $\RRT\in GCZO_\rho(\delta,\sigma)$ for any $0<\delta<\min\{\sigma_0,1\}$. Then observing that $\RRT^*1=0$, the conclusion of the theorem follows from Theorem \ref{Th: CZO} (c) also. The boundedness of $\RRT$ on $L^\sigma(\RR{n})$ for $n\ge 3$ was established in \cite{Sh}. It remains to prove \eqref{cond1} and \eqref{cond2}. The following kernel estimates are required.
	\begin{Proposition} \label{prop: RRP est}
	For each $1\le\theta\le \sigma$, there exists $\kappa>0$ such that the following holds for all $N>0$.
	\begin{enumerate}[\upshape(a)]
	\item For every $y\in \RR{n}$, $t>0$,
	\begin{align*}
		\Bigl\Vert \nabla^2p_t(\cdot,y)e^{\f{|\cdot-y|^2}{\kappa t}}\Bigr\Vert_{L^\theta}\le C t^{-1-\f{n}{2\theta'}} \Bigl(1+\f{\sqrt{t}}{\rho(y)}\Bigr)^{-N}.
\end{align*}
		\item For all $|y-y'|\le \sqrt{t}$ and any $0<\sigma_1<\sigma_0$ we have
	\begin{align*}
		\Bigl\Vert [\nabla^2p_t(\cdot,y)-\nabla^2p_t(\cdot,y')]e^{\f{|\cdot-y|^2}{\kappa t}}\Bigr\Vert_{L^\theta}\le C \Bigl(\f{|y-y'|}{\sqrt{t}}\Bigr)^{\sigma_1}t^{-1-\f{n}{2\theta'}} \Bigl(1+\f{\sqrt{t}}{\rho(y)}\Bigr)^{-N}. 
\end{align*}
	\end{enumerate}
	\end{Proposition}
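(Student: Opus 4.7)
The plan for part (a) is to combine local $W^{2,\theta}$ regularity for $-\Delta$ with the pointwise Gaussian bounds of Proposition \ref{prop-kernelestimates} (together with their analogues for $\partial_t p_t$, which gain an extra factor of $t^{-1}$) and the reverse Hölder property of $V$. Fix $y\in\RR{n}$ and set $u=p_t(\cdot,y)$. The identity $Lu=-\partial_t p_t(\cdot,y)$ rearranges to $-\Delta u = -\partial_t p_t(\cdot,y) - Vu$, so the classical Calderón--Zygmund estimate for $\nabla^2(-\Delta)^{-1}$ (bounded on every $L^\theta$, $1<\theta<\infty$) combined with a cutoff supported in $2B_0$ yields, on any ball $B_0=B(x_0,\sqrt t)$,
\begin{equation*}
\|\nabla^2 u\|_{L^\theta(B_0)} \lesssim \|\partial_t p_t(\cdot,y)\|_{L^\theta(2B_0)} + \|V p_t(\cdot,y)\|_{L^\theta(2B_0)} + t^{-1}\|p_t(\cdot,y)\|_{L^\theta(2B_0)},
\end{equation*}
after absorbing the $\|\nabla u\|$-cross term by interpolation. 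Each term on the right is estimated via the pointwise Gaussian bounds, while for the $V$-term one invokes $V\in \RH_\sigma$ with $\theta\le\sigma$ together with Remark \ref{rem: V estimate} and Lemma \ref{Lem1: rho}(i) to transfer the critical-radius factor at $x_0$ to $y$.

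Next, cover $\RR{n}$ by a bounded-overlap family of balls $\{B_j\}$ of radius $\sqrt t$ centred at a $\sqrt t$-lattice $\{x_j\}$. Applying the local estimate on each $B_j$ and integrating against the exponential weight produces
\begin{equation*}
\int_{B_j} |\nabla^2 p_t(x,y)|^\theta\, e^{\theta|x-y|^2/(\kappa t)}\,dx \lesssim e^{-c|x_j-y|^2/t}\, t^{-\theta-n(\theta-1)/2}\Bigl(1+\tfrac{\sqrt t}{\rho(y)}\Bigr)^{-N\theta},
\end{equation*}
provided $\kappa$ is chosen large enough that the Gaussian decay $e^{-c|x-y|^2/t}$ of $p_t$ and $\partial_t p_t$ dominates $e^{|x-y|^2/(\kappa t)}$ on each ball. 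Summing over the lattice converges to a uniform constant, yielding the $L^\theta$-bound of (a).

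For part (b), I would exploit the semigroup factorisation $e^{-tL}=e^{-(t/2)L}e^{-(t/2)L}$ to write
\begin{equation*}
\nabla^2 p_t(x,y) - \nabla^2 p_t(x,y') = \int \nabla^2 p_{t/2}(x,z)\bigl[p_{t/2}(z,y) - p_{t/2}(z,y')\bigr]\,dz.
\end{equation*}
Minkowski's integral inequality in $x$, the weight splitting $|x-y|^2 \le 2|x-z|^2 + 2|z-y|^2$, part (a) applied to $\nabla^2 p_{t/2}(\cdot,z)$ with weight $e^{2|\cdot-z|^2/(\kappa t)}$ (after adjusting $\kappa$), and the Hölder estimate \eqref{hk holder} (applied in the second variable, by symmetry of the heat kernel) to bound $|p_{t/2}(z,y)-p_{t/2}(z,y')|$, reduce matters to an elementary Gaussian convolution in $z$ that produces the factor $(|y-y'|/\sqrt t)^{\sigma_1}$ and absorbs the residual exponential against the Gaussian decay of $p_{t/2}(\cdot,y)$.

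The principal technical obstacle is controlling the $Vu$-term in the local regularity estimate by $t^{-1}\|p_t(\cdot,y)\|_{L^\theta(2B_0)}$ (up to a harmless $(1+\sqrt t/\rho(y))^{-N}$ factor) uniformly across all balls $B_0$. This is precisely where the restriction $\theta\le\sigma$ is essential: $\RH_\sigma$ lets one bound $L^\theta$-averages of $V$ by $L^1$-averages, and Remark \ref{rem: V estimate} gives $\barint_{B_0} V\lesssim t^{-1}(\sqrt t/\rho_{B_0}\wedge 1)^{\sigma_0}$ for $B_0$ of radius $\sqrt t$, producing the required $t^{-1}$ scaling along with an auxiliary factor that is absorbed into the critical-radius weight via Lemma \ref{Lem1: rho}(i).
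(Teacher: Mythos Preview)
Your argument is correct. The paper itself does not give a self-contained proof: part (a) is quoted from \cite{FKL2}, and for (b) the paper only remarks that one repeats that argument with the H\"older bound \eqref{hk holder} in place of \eqref{hk bound}. Your treatment of (a) via local $W^{2,\theta}$ elliptic regularity on balls of radius $\sqrt t$, combined with a bounded-overlap covering and Gaussian summation, is exactly the standard route and presumably coincides with what \cite{FKL2} does; in particular your identification of the $Vu$-term as the place where $\theta\le\sigma$ enters (via $\RH_\sigma$ and Remark~\ref{rem: V estimate}) is on the mark.

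Your route for (b) is genuinely different from the paper's suggestion. The paper would have you apply the local regularity argument to the difference $u(x)=p_t(x,y)-p_t(x,y')$, which in turn requires a H\"older-in-$y$ analogue of the time-derivative bound (i.e.\ a version of \eqref{hk holder} for $\partial_t p_t$). Your semigroup factorisation $e^{-tL}=e^{-(t/2)L}e^{-(t/2)L}$ sidesteps this entirely: it reduces (b) to part (a) at level $t/2$ together with the existing H\"older estimate \eqref{hk holder} for $p_{t/2}$, and a Gaussian convolution in $z$. This is cleaner, since it uses only ingredients already stated in the paper and avoids invoking any additional kernel estimate. The paper's route is perhaps more uniform in spirit (one template for both parts), but yours is more economical.

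One small remark: the local Calder\'on--Zygmund estimate you invoke requires $\theta>1$, so the endpoint $\theta=1$ in the statement is not covered directly; however, it follows immediately by applying H\"older's inequality on each ball $B_j$ to pass from any $\theta>1$ down to $\theta=1$, with the volume factor $|B_j|^{1/\theta'}\sim t^{n/(2\theta')}$ producing the correct exponent $t^{-1}$ on the right-hand side.
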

	\begin{proof}[Proof of Proposition \ref{prop: RRP est}]
	Part (a) was proved in \cite{FKL2} Proposition 2.4. Part (b) can be obtained by the same argument but using the second estimate of Proposition \ref{prop-kernelestimates} in place the first. 
	\end{proof}
	We these estimates in hand, we can now obtain \eqref{cond1} and \eqref{cond2} for the kernel of $\RRT$ given by 
	$$ K(x,y) = \int_0^\infty \nabla^2_x p_t(x,y)\,dt. $$
	In fact the proof of \eqref{cond1} and \eqref{cond2} is the same as that of $K_s(x,y)$ for the operator $V^sL^{-s}$  for $s=1$ (see \eqref{Ks est}--\eqref{Ks est3} below), but applying Proposition \ref{prop: RRP est} in place of Proposition \ref{prop: VP est}. 
\end{proof}

\subsection{Riesz transforms $V^s L^{-s}$, $0<s\leq 1$, and their adjoints}
For each $0<s\le 1$ we set
\begin{align*}
	V^s L^{-s} = \f{1}{\Gamma(s)} \int_0^\infty V^s e^{-tL} \f{dt}{t^{1-s}}.
\end{align*}
It is known from Corollary 3 of \cite{Su} that the operators $V^sL^{-s}$ are bounded on $L^p(\RR{n})$ for each $1<p<\f{\sigma}{s}$.

\begin{proof}[\bf Proof of Theorem \ref{thm5 appl3}]

To prove this theorem we shall apply Theorem \ref{Th: CZO} to $T=V^sL^{-s}$. We first show that $V^sL^{-s} \in GCZO_\rho (\gamma,\theta)$ for any $1< \theta< \sigma/s$ and $0<\gamma<\sigma_0$. 
To do so, we require the following kernel estimates for $V^s e^{-tL}$.

\begin{Proposition} \label{prop: VP est}
	For each $0<s\le 1$ and $1\le\theta\le \f{\sigma}{s}$, there exists $\kappa>0$ such that the following holds for all $N>0$.
	\begin{enumerate}[\upshape(a)]
	\item For every $y\in \RR{n}$, $t>0$,
	\begin{align*}
		\Bigl\Vert V^s(\cdot)p_t(\cdot,y)e^{\f{|\cdot-y|^2}{\kappa t}}\Bigr\Vert_{L^\theta}\le C t^{-s-\f{n}{2\theta'}} \Bigl(1+\f{\sqrt{t}}{\rho(y)}\Bigr)^{-N}.
\end{align*}
		\item For all $|y-y'|\le \sqrt{t}$ and any $0<\sigma_1<\sigma_0$ we have
	\begin{align*}
		\Bigl\Vert V^s(\cdot)[p_t(\cdot,y)-p_t(\cdot,y')]e^{\f{|\cdot-y|^2}{\kappa t}}\Bigr\Vert_{L^\theta}\le C \Bigl(\f{|y-y'|}{\sqrt{t}}\Bigr)^{\sigma_1}t^{-s-\f{n}{2\theta'}} 
\end{align*}
	\end{enumerate}
\end{Proposition}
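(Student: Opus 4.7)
The plan is to combine the Gaussian heat kernel bound \eqref{hk bound} with a dyadic annular decomposition around $y$ and the reverse H\"older inequality for $V$. For part (a), begin by invoking \eqref{hk bound} and dropping the $\rho(x)^{-1}$ term (since it is non-negative) to get
\begin{align*}
V^s(x)\,p_t(x,y)\,e^{|x-y|^2/(\kappa t)} \lesssim V^s(x)\,t^{-n/2}\,e^{-|x-y|^2/(ct)}\,\Bigl(1+\tfrac{\sqrt{t}}{\rho(y)}\Bigr)^{-N}e^{|x-y|^2/(\kappa t)}.
\end{align*}
Choose $\kappa>c$, so that $e^{-|x-y|^2/(ct)}e^{|x-y|^2/(\kappa t)}=e^{-|x-y|^2/(c't)}$ for some $c'>0$. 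Thus it suffices to estimate $\bigl\Vert V^s(\cdot)e^{-|\cdot-y|^2/(c't)}\bigr\Vert_{L^\theta}$ and pick up the exogenous factors $t^{-n/2}(1+\sqrt{t}/\rho(y))^{-N}$ afterwards.

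Next I would decompose dyadically: set $r_k=2^k\sqrt{t}$, $B_k=B(y,r_k)$, and $A_k=B_k\setminus B_{k-1}$ (with $A_0=B_0$). On $A_k$ the exponential is bounded by $e^{-c''4^k}$, giving
\begin{align*}
\bigl\Vert V^s e^{-|\cdot-y|^2/(c't)}\bigr\Vert_{L^\theta}^\theta \lesssim \sum_{k\ge 0}e^{-c''4^k\theta}\int_{B_k} V^{s\theta}.
\end{align*}
Since $s\theta\le \sigma$, H\"older yields $\bigl(\fint_{B_k} V^{s\theta}\bigr)^{1/(s\theta)}\le \bigl(\fint_{B_k} V^\sigma\bigr)^{1/\sigma}$, and then the reverse H\"older inequality \eqref{RH} gives $\bigl(\fint_{B_k} V^\sigma\bigr)^{1/\sigma}\lesssim \fint_{B_k} V$. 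Combining, $\int_{B_k} V^{s\theta}\lesssim |B_k|\bigl(\fint_{B_k} V\bigr)^{s\theta}$. Applying Remark \ref{rem: V estimate} in the two regimes $r_k\le \rho(y)$ and $r_k>\rho(y)$ produces bounds of the form $r_k^{n-(2-\sigma_0)s\theta}\rho(y)^{-\sigma_0 s\theta}$ and $r_k^{n+(n_0-n)s\theta}\rho(y)^{-(n_0-n+2)s\theta}$ respectively. Rewriting $r_k^{\cdot}=2^{k\cdot}t^{\cdot/2}$, the super-exponential factor $e^{-c''4^k\theta}$ absorbs all powers of $2^k$; the resulting $t$-power matches $t^{n/2-s\theta}$ exactly, while the extra factors $(\sqrt{t}/\rho(y))^{s\theta\sigma_0}$ or $(\sqrt{t}/\rho(y))^{s\theta(n_0-n+2)}$ are harmless after combining with $(1+\sqrt{t}/\rho(y))^{-N}$ for $N$ large. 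This produces the desired bound $t^{-s-n/(2\theta')}(1+\sqrt{t}/\rho(y))^{-N}$ after taking $\theta$th roots.

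Part (b) is obtained by the same scheme, only with \eqref{hk holder} replacing \eqref{hk bound} as the starting point; this produces an additional factor $(|y-y'|/\sqrt{t})^{\sigma_1}$ that rides along through all subsequent steps. The main obstacle is bookkeeping in the case $r_k>\rho(y)$: here $\fint_{B_k} V$ grows polynomially with $r_k/\rho(y)$, and one must verify that the growth is compensated (in the sum) by the super-exponential Gaussian factor, and (in the overall $(t,y)$ dependence) by choosing the decay exponent $N$ large enough in \eqref{hk bound}. Once that accounting is done the estimate reduces to a convergent geometric series.
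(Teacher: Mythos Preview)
Your proposal is correct and follows essentially the same route as the paper: apply \eqref{hk bound} (resp.\ \eqref{hk holder}) with $\kappa$ large enough to absorb the reverse Gaussian weight, decompose dyadically over annuli $B(y,2^{k}\sqrt{t})$, use the reverse H\"older property of $V$ to bound $\int_{B_k} V^{s\theta}$ by $|B_k|\bigl(\fint_{B_k}V\bigr)^{s\theta}$, and then control $\fint_{B_k}V$ via Remark~\ref{rem: V estimate}, with the super-exponential factor $e^{-c4^k}$ killing all polynomial growth in $k$. The only cosmetic difference is that the paper first uses the doubling property of $V^s$ to reduce every annulus to the single ball $B(y,\sqrt{t})$ and then invokes the packaged estimate \eqref{V key est}, whereas you apply Remark~\ref{rem: V estimate} term-by-term and do the two-regime bookkeeping explicitly; these are equivalent, since \eqref{V key est} is itself an immediate consequence of Remark~\ref{rem: V estimate}.
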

\begin{proof}[Proof of Proposition \ref{prop: VP est}]
	We need the following  estimate: for $N$ large enough we have
	\begin{align}\label{V key est}
		\Bigl(1+\f{\sqrt{t}}{\rho(x)}\Bigr)^{-N} \biggl(t\barint_{B(x,\sqrt{t})}V\biggr)^q \le C_{N,q}.
	\end{align}
	We can see this by applying Remark \ref{rem: V estimate}. 
	
	For the proof of (a), by applying the bounds on the heat kernel $p_t(x,y)$ from Proposition \ref{prop-kernelestimates} and by taking $\kappa$ large enough we have
	\begin{align*}
		\Bigl\Vert V^s(\cdot)p_t(\cdot,y)e^{\f{|\cdot-y|^2}{\kappa t}}\Bigr\Vert_{L^\theta}^\theta
		&\lesssim t^{-\f{n\theta }{2}}\Bigl(1+\f{\sqrt{t}}{\rho(y)}\Bigr)^{-N'\theta} \int V(x)^{s\theta} e^{-c\f{|x-y|^2}{t}}\,dx.
	\end{align*}
	Now since $V^s\in \RH_{\sigma/s}$ and $\theta\le \sigma/s$ then
	\begin{align*}
		\int V(x)^{s\theta} e^{-c\f{|x-y|^2}{t}}\,dx 
		&=\int_{B(y,2\sqrt{t})}  \dots\,dx+\sum_{j=1}^\infty \int_{B(y,2^{j+1}\sqrt{t})\backslash B(y,2^j\sqrt{t})} \dots dx
		 \\
		&\lesssim t^{n/2}\biggl(\barint_{B(y,\sqrt{t})}V^s\biggr)^\theta \biggl\{1+ \sum_{j=1}^\infty e^{-c4^j} 2^{j(n_{0,s}+n-n\theta)} \biggr\}\\
		&\lesssim t^{n/2-s\theta}\biggl(\barint_{B(y,\sqrt{t})}V\biggr)^{s\theta}, 
	\end{align*}
	where in the second last step we applied the doubling property of $V^s$, with $n_{0,s}$ the doubling power ov $V^s$. In the last step we applied H\"older's inequality with exponent $1/s$. 
	
	Therefore in view of \eqref{V key est} and by choosing $N'$ large enough we obtain
	\begin{align*}
		\Bigl\Vert V^s(\cdot)p_t(\cdot,y)e^{\f{|\cdot-y|^2}{\kappa t}}\Bigr\Vert_{L^\theta}
		&\lesssim t^{-s-\f{n}{2\theta'}}\Bigl(1+\f{\sqrt{t}}{\rho(y)}\Bigr)^{-N'} \biggl(t\barint_{B(y,\sqrt{t})} V(x)\,dx\biggr)^s \\
		&\lesssim t^{-s-\f{n}{2\theta'}}\Bigl(1+\f{\sqrt{t}}{\rho(y)}\Bigr)^{-N}.
	\end{align*}
	
	To prove (b) we argue as in (a), but apply the second estimate in Proposition \ref{prop-kernelestimates}.
\end{proof}

We can now show that $T=V^sL^{-s} \in GCZO_\rho (\gamma,\theta)$ for any $1< \theta< \sigma/s$ and $0<\gamma<\sigma_0$. Let $K_s(x,y)$ be the kernel of $V^sL^{-s}$. Then 
\begin{align*}
	K_s(x,y) = \f{1}{\Gamma(s)} \int_0^\infty V^s(x) p_t(x,y)\f{dt}{t^{1-s}}
\end{align*}
We first prove \eqref{cond1}. Now let $B$ be a ball with $r_B\ge 2\rho_B$ and $y\in B(x_B,\rho_B)$. Then by Proposition \ref{prop: VP est} (a), and that $\rho(y)\sim\rho_B$, 
\begin{align} \label{Ks est}
	\bigl\Vert K_s(\cdot,y)\bigr\Vert_{L^\theta(2B\backslash B) } 
	&\lesssim \int_0^\infty \bigl\Vert V^s(\cdot)p_t(\cdot,y)\bigr\Vert_{L^\theta(2B\backslash B)}\f{dt}{t^{1-s}} \\
	&\lesssim \int_0^\infty e^{-c \f{r_B^2}{t}} t^{-1-\f{n}{2\theta'}} \bigl(1+\tfrac{\sqrt{t}}{\rho_B}\bigr)^{-N} \,dt  \notag
	=:I+II
\end{align}
where 
\begin{align*}
	I = \int_0^{r_B^2} e^{-c \f{r_B^2}{t}} t^{-1-\f{n}{2\theta'}} \bigl(1+\tfrac{\sqrt{t}}{\rho_B}\bigr)^{-N} \,dt 
	\lesssim r_B^{-\f{n}{\theta'}}\Bigl(\f{\rho_B}{r_B}\Bigr)^{2N}
\end{align*}	
and 
\begin{align*}
	II= \int_{r_B^2}^\infty e^{-c \f{r_B^2}{t}} t^{-1-\f{n}{2\theta'}} \bigl(1+\tfrac{\sqrt{t}}{\rho_B}\bigr)^{-N} \,dt 
	\le \int_{r_B^2}^\infty t^{-1-\f{n}{2\theta'}} \bigl(1+\tfrac{\sqrt{t}}{\rho_B}\bigr)^{-N} \,dt 
	\lesssim 
	r_B^{-\f{n}{\theta'}}\Bigl(\f{\rho_B}{r_B}\Bigr)^{2N}
\end{align*}
for any $N>0$. 

Let us show \eqref{cond2}. Let $B$ be any ball and $y\in B$. Then for each $k\ge 1$,
\begin{align*}
	\bigl\Vert K_s(\cdot,y) - K_s(\cdot,x_B) \bigr\Vert_{L^\theta(2^{k+1}B\backslash 2^kB) }  
	&\lesssim \int_0^\infty \bigl\Vert V^s(\cdot)p_t(\cdot,y) - V^s(\cdot)p_t(\cdot,x_B) \bigr\Vert_{L^\theta(2^{k+1}B\backslash 2^kB)} \f{dt}{t^{1-s}} \\
	&= \int_0^{r_B^2} \dots +\int_{r_B^2}^\infty \dots =: I+II
\end{align*}
Now let for any $0<\gamma<\sigma_0$ we choose firstly $\sigma_1$ such that $\gamma<\sigma_1<\sigma_0$, and secondly $\epsilon = \f{1}{2}(\gamma+\f{n}{\theta'})$.  Then by the triangle inequality, Proposition \ref{prop: VP est} (a), and the fact that $y\in B$ we have 
\begin{align}\label{Ks est2}
	I 
	\lesssim \int_0^{r_B^2} e^{-c4^k \f{r_B^2}{t}} t^{-1-\f{n}{2\theta'}}\,dt 
	\lesssim 4^{-k\epsilon} r_B^{-2\epsilon} \int_0^{r_B^2} t^{-1-\f{n}{2\theta'}+\epsilon}dt
	\lesssim 4^{-k\epsilon} r_B^{-\f{n}{\theta'}}  
	\end{align}
	We also have by Proposition \ref{prop: VP est} (b)
	\begin{align*}
	II \lesssim 	\int_{r_B^2}^\infty e^{-c4^k \f{r_B^2}{t}} t^{-1-\f{n}{2\theta'}} \Bigl(\f{|y-x_B|}{\sqrt{t}}\Bigr)^{\sigma_1}\,dt 
	\lesssim r_B^{\sigma_1-2\epsilon} \int_{r_B^2}^\infty t^{-1-\f{n}{2\theta'}-\f{\sigma_1}{2}+\epsilon}\,dt 
	\lesssim 4^{-k\epsilon}r_B^{-\f{n}{\theta'}}
	\end{align*}
Thus collecting our estimates for $I$ and $II$ we have
\begin{align}\label{Ks est3}
	\bigl\Vert K_s(\cdot,y) - K_s(\cdot,x_B) \bigr\Vert_{L^\theta(2^{k+1}B\backslash 2^kB) }
	\lesssim 4^{-k\epsilon}r_B^{-\f{n}{\theta'}}
	= 2^{-k\gamma} |2^k B|^{-\f{1}{\theta'}}
	\end{align}
	where $\gamma = 2\epsilon -\f{n}{\theta'}$. 
	
	Next we show conditions (a) and (b) of Theorem \ref{Th: CZO} for $T^*=L^{-s}V^s$. More precisely we prove
	\begin{align}\label{LV1}
		&\log\Bigl(\frac{\rho_B}{r_B}\Bigr)\Big(\barint_B\AbbsA{L^{-s}V^s1(y)-(L^{-s}V^s1)_B}^{\theta'}\,dy\Big)^{1/\theta'} \le C \\
		\label{LV2}
		&\ContainC{\frac{\rho_B}{r_B}}^{n(1/p-1)}\Big(\barint_B\AbbsA{L^{-s}V^s1(y)-(L^{-s}V^s1)_B}^{\theta'}\,dy\Big)^{1/\theta'} \le C
	\end{align}
	for every ball $B$ with $r_B\le\tfrac{1}{2}\rho_B$ and $\f{n}{n+s\sigma_0\wedge 1}<p<1$.
	In fact, for any $1<\theta<\infty$, estimates  \eqref{LV1} and \eqref{LV2} are consequences of the following stronger estimate
	\begin{align}\label{LV lip}
		\bigl| L^{-s}V^s1(x)-L^{-s}V^s1(y)\bigr| \lesssim \Bigl(\f{r_B}{\rho_B}\Bigr)^\delta 
	\end{align}
	for any ball $B$ with $r_B\le \f{1}{2}\rho_B$, and $x,y\in B$, and any $0<\delta<s\sigma_0 \wedge 1$. We shall show \eqref{LV lip} by applying the following lemma. 
	\begin{Lemma}\label{lem: EV}
	Let $0<s\le 1$. For any $0<\sigma_1<\sigma_0$, $0<\delta\le s\sigma_0\wedge 1$ and $N>0$ the following holds:
\begin{align}\label{lem: EV1}
\bigl|e^{-tL}V^s(x)\bigr| \le C t^{-s} \Bigl(\f{\sqrt{t}}{\rho(x)}\Bigr)^\delta \Bigl(1+\f{\sqrt{t}}{\rho(x)}\Bigr)^{-N} 
\end{align}
for any $x\in\RR{n}$ and $t>0$, and
\begin{align}\label{lem: EV2}
\bigl|e^{-tL}V^s(x)-e^{-tL}V^s(y)\bigr| \le C t^{-s} \Bigl(\f{\sqrt{t}}{\rho(x)}\Bigr)^\delta \Bigl(1+\f{\sqrt{t}}{\rho(x)}\Bigr)^{-N} \Bigl(\f{|x-y|}{\sqrt{t}}\Bigr)^{\sigma_1}
\end{align}
for all $t>0$ and $|x-y|\le\sqrt{t}$. 
	\end{Lemma}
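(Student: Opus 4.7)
The plan is to prove both estimates by writing $e^{-tL}V^s(x) = \int_{\RR{n}} p_t(x,z) V^s(z)\,dz$ and applying the Gaussian bounds of Proposition \ref{prop-kernelestimates} directly, reducing everything to the $L^1$-average of $V$ on $B(x,\sqrt{t})$, which we then control using the critical function estimates of Lemma \ref{Lem1: rho}.

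For \eqref{lem: EV1}, the pointwise bound \eqref{hk bound} gives, for any $N'>0$,
$$|e^{-tL}V^s(x)| \lesi t^{-n/2}\ContainC{1+\f{\sqrt{t}}{\rho(x)}}^{-N'} \int_{\RR{n}} V^s(z)\,e^{-|x-z|^2/ct}\,dz.$$
Splitting the domain into annuli $B(x,2^{j+1}\sqrt{t}) \setminus B(x,2^j\sqrt{t})$ and using that $V^s \in \RH_{\sigma/s}$ is a doubling measure (so its average on $B(x,2^j\sqrt{t})$ grows at most polynomially in $2^j$ while being beaten by $e^{-c4^j}$), I reduce to
$$\int V^s(z)\,e^{-|x-z|^2/ct}\,dz \lesi t^{n/2} \barint_{B(x,\sqrt{t})} V^s \lesi t^{n/2}\ContainC{\barint_{B(x,\sqrt{t})} V}^s,$$
where the last step is H\"older's inequality with exponent $1/s$. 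Applying Lemma \ref{Lem1: rho} (ii)--(iii) when $\sqrt{t}\le \rho(x)$ yields $\barint_{B(x,\sqrt{t})} V \lesi t^{-1}(\sqrt{t}/\rho(x))^{\sigma_0}$, so
$$|e^{-tL}V^s(x)| \lesi t^{-s}\ContainC{\f{\sqrt{t}}{\rho(x)}}^{s\sigma_0}\ContainC{1+\f{\sqrt{t}}{\rho(x)}}^{-N'}.$$
Since $\delta \le s\sigma_0 \wedge 1$, we may replace $(\sqrt{t}/\rho(x))^{s\sigma_0}$ with $(\sqrt{t}/\rho(x))^\delta$ by absorbing the surplus power into the $(1+\sqrt{t}/\rho(x))^{-N'}$ factor. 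For $\sqrt{t}>\rho(x)$ we use instead Remark \ref{rem: V estimate}, which gives $t\barint_{B(x,\sqrt{t})} V \lesi (\sqrt{t}/\rho(x))^{n_0+2-n}$, and the resulting polynomial growth is again tamed by enlarging $N'$ to the desired $N$. This proves \eqref{lem: EV1}.

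For \eqref{lem: EV2}, I repeat the same argument but replace the pointwise bound \eqref{hk bound} by the H\"older estimate \eqref{hk holder}, which produces an extra factor $(|x-y|/\sqrt{t})^{\sigma_1}$ under the hypothesis $|x-y|\le \sqrt{t}$. Lemma \ref{Lem1: rho} (i) ensures that $\rho(x)\sim \rho(y)$ whenever $|x-y|\le \sqrt{t}$ (since then $\sqrt{t}\le \rho(x)+|x-y|$ gives comparable critical radii up to universal constants), so one may freely write the bound in terms of $\rho(x)$. The main technical obstacle, and the only place where some care is required, is the passage from $\int V^s\,e^{-|x-z|^2/ct}$ to the $L^1$-average of $V$; this requires both the doubling of $V^s$ and the H\"older reduction to $V$, together with the split into the regimes $\sqrt{t}\le \rho(x)$ and $\sqrt{t}>\rho(x)$ so that the correct critical-function decay is extracted in each range.
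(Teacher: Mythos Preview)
Your proof is correct and follows essentially the same approach as the paper: apply the heat kernel bounds from Proposition~\ref{prop-kernelestimates}, reduce the integral $\int V^s e^{-|x-z|^2/ct}$ to $(t\,\barint_{B(x,\sqrt{t})}V)^s$ via doubling and H\"older, and then invoke Remark~\ref{rem: V estimate} with a sufficiently large exponent $N'$. One small remark: your claim that $\rho(x)\sim\rho(y)$ whenever $|x-y|\le\sqrt{t}$ is neither true in general nor needed, since the H\"older estimate \eqref{hk holder} already carries the factor $\bigl(1+\sqrt{t}/\rho(x)\bigr)^{-N}$ directly.
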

	\begin{proof}
	We first prove \eqref{lem: EV1}. We have 
	\begin{align*}
		e^{-tL}V^s(x) 
		&= \int p_t(x,w)V^s(w)\,dw \\
		&\lesssim \Bigl(1+\f{\sqrt{t}}{\rho(x)}\Bigr)^{-M} \int t^{-\f{n}{2}} e^{-c\f{|x-w|^2}{t}} V^s(w)\,dw \\
		&\lesssim t^{-s}\Bigl(1+\f{\sqrt{t}}{\rho(x)}\Bigr)^{-M} \biggl(t\barint_{B(x,\sqrt{t})}V\biggr)^s. 
	\end{align*}
	Now from Remark \ref{rem: V estimate} 	and by choosing $M$ large enough, we obtain the required estimate.

	The proof of \eqref{lem: EV2} begins with
	\begin{align*}
		\bigl|e^{-tL}V^s(x)-e^{-tL}V^s(y)\bigr| 
		&\le \int \bigl|p_t(x,w)-p_t(y,w)\bigr| V^s(w)\,dw \\
		&\lesssim \Bigl(\f{|x-y|}{\sqrt{t}}\Bigr)^{\sigma_1}\Bigl(1+\f{\sqrt{t}}{\rho(x)}\Bigr)^{-M} \int t^{-\f{n}{2}} e^{-c\f{|x-w|^2}{t}} V^s(w)\,dw, 
	\end{align*}
	and we proceed as in \eqref{lem: EV1}. 
	\end{proof}
	Let us continue with the proof of \eqref{LV lip}. We first write
	\begin{align*}
		\bigl| L^{-s}V^s1(x)-L^{-s}V^s1(y)\bigr| 
		&\le \int_0^\infty \bigl| e^{-tL}V^s(x) -e^{-tL}V^s(y)\bigr| \f{dt}{t^{1-s}} \\
		&= \int_0^{4r_B^2} + \int_{4r_B^2}^{\rho_B^2} + \int_{\rho_B^2}^\infty \dots =: I+ II+III.
	\end{align*}
	Now by \eqref{lem: EV1}, and that $\rho(x)\sim\rho(y)\sim\rho_B$ we have for any $0<\delta<s\sigma_0\wedge 1$, 
	\begin{align*}
		I\le \int_0^{4r_B^2}\bigl|e^{-tL}V^s(x)\bigr|\f{dt}{t^{1-s}} + \int_0^{4r_B^2}\bigl|e^{-tL}V^s(y)\bigr|\f{dt}{t^{1-s}} 
		\lesssim \rho_B^{-\delta} \int_0^{4r_B^2} \f{dt}{t^{1-\f{\delta}{2}}} 
		\lesssim \Bigl(\f{r_B}{\rho_B}\Bigr)^\delta.
	\end{align*}
	Now pick $\delta<\delta_1\le s\sigma_0\wedge 1$. From $|x-y|\le 2r_B\le \sqrt{t}$ and that $\rho(x)\sim\rho_B$ we have by \eqref{lem: EV2},
	\begin{align*}
	 II \le \int_{4r_B^2}^{\rho_B^2} t^{-s} \Bigl(\f{|x-y|}{\sqrt{t}}\Bigr)^{\delta_1} \Bigl(\f{\sqrt{t}}{\rho_B}\Bigr)^{\delta_1}\f{dt}{t^{1-s}} 
	\lesssim \Bigl(\f{r_B}{\rho_B}\Bigr)^{\delta_1} \int_{4r_B^2}^{\rho_B^2} \f{dt}{t} 
	\lesssim \Bigl(\f{r_B}{\rho_B}\Bigr)^{\delta_1} \log \Bigl(\f{\rho_B}{r_B}\Bigr) 
	\lesssim \Bigl(\f{r_B}{\rho_B}\Bigr)^\delta.
	\end{align*}
	Finally from \eqref{lem: EV2} and by taking $N$ large enough,
	\begin{align*}
	 III\lesssim \int_{\rho_B^2}^\infty t^{-s}\Bigl(\f{|x-y|}{\sqrt{t}}\Bigr)^\delta \f{dt}{t^{1-s}} \lesssim r_B^\delta \int_{\rho_B^2}^\infty \f{dt}{t^{1+\f{\delta}{2}}} \lesssim \Bigl(\f{r_B}{\rho_B}\Bigr)^\delta.
	\end{align*}
	The terms $I, II$ and $III$ together give \eqref{LV lip}.
	
	Thus \eqref{LV1} and \eqref{LV2} hold for any $\theta>1$, and so we may conclude the proof of Theorem \ref{thm5 appl3} by invoking Theorem \ref{Th: CZO}.
\end{proof}

\subsubsection{The Riesz transforms $L^{-s}V^s$}

Before giving the proof of Theorem \ref{thm6 appl4} we make some preliminary remarks.

Firstly, the hypothesis $V\in \RH_\infty$ ensures that $V^sL^{-s}$ and $L^{-s}V^s$ are both $L^p$ bounded for all $1<p<\infty$. 
Secondly, the conditions $V\in\RH_\infty$ and \eqref{V cond1} imply
\begin{align}\label{V cond2}
	V(x) \le C' \rho(x)^{-2} \qquad \text{a.e. }x
\end{align}
for some $C'>0$. See \cite{Sh2} Remark 1.8.

Our conditions on $V$ guarantee it admits a certain smoothness, encapsulated in the following result.
\begin{Lemma} \label{lem: V lip}
	If $V$ satisfies \eqref{V cond1} then for each $0<s\le 1$ there exists $C>0$ depending only on $s$ and $V$ such that 	for every $0<\eta\le 1$ we have
	\begin{align*}
		\bigl| V^s(x)-V^s(y)\bigr| 
		\le  \f{C}{t^s}\Bigl(\f{|x-y|}{\sqrt{t}}\Bigr)^\eta \Bigl(\f{\sqrt{t}}{\rho(x)}\Bigr)^{1+2s}\Bigl(1+\f{\sqrt{t}}{\rho(x)}\Bigr)^{2+4s}
	\end{align*}
 whenever $|x-y|\le\sqrt{t}$. 
\end{Lemma}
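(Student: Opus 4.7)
The plan is to combine two complementary bounds on $|V^s(x)-V^s(y)|$ and interpolate. The first bound exploits the derivative condition: by the fundamental theorem of calculus along the segment $z_\tau := y+\tau(x-y)$, combined with $|\nabla V(z)|\le C\rho(z)^{-3}$ from \eqref{V cond1} and Lemma \ref{Lem1: rho}(i) (to transfer $\rho(z_\tau)^{-1}$ to $\rho(x)^{-1}$ at the cost of a polynomial factor $(1+|x-z_\tau|/\rho(x))^{k_0}\le(1+\sqrt{t}/\rho(x))^{k_0}$, since $|x-z_\tau|\le\sqrt{t}$), one obtains
\begin{align*}
|V(x)-V(y)|\le C|x-y|\,\rho(x)^{-3}\bigl(1+\sqrt{t}/\rho(x)\bigr)^{3k_0}.
\end{align*}
Applying the elementary inequality $|a^s-b^s|\le|a-b|^s$, valid for $a,b\ge 0$ and $0<s\le 1$, yields the H\"older-type estimate
\begin{align*}
\textup{(A)}\qquad |V^s(x)-V^s(y)|\le C|x-y|^s\rho(x)^{-3s}\bigl(1+\sqrt{t}/\rho(x)\bigr)^{3sk_0}.
\end{align*}
The second, cruder bound comes from the pointwise estimate \eqref{V cond2} together with the same $\rho$-comparison:
\begin{align*}
\textup{(B)}\qquad |V^s(x)-V^s(y)|\le 2\max(V^s(x),V^s(y))\le C\rho(x)^{-2s}\bigl(1+\sqrt{t}/\rho(x)\bigr)^{2sk_0}.
\end{align*}

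I would then take a geometric mean of (A) and (B) with weight $\lambda\in[0,1]$, choosing $\lambda$ so as to produce the target factor $|x-y|^\eta$ on the right-hand side. Writing $|x-y|^\eta=(|x-y|/\sqrt{t})^\eta(\sqrt{t})^\eta$ via $|x-y|\le\sqrt{t}$, and using the identity $t^{-s}(\sqrt{t}/\rho(x))^{1+2s}=\sqrt{t}\,\rho(x)^{-(1+2s)}$, one then rearranges into the stated form. The generous polynomial factor $(1+\sqrt{t}/\rho(x))^{2+4s}$ on the right-hand side is available to absorb both the $(1+\sqrt{t}/\rho(x))^{3sk_0}$ coming from (A) and the discrepancy between the $\rho^{-(\eta+2s)}$ produced by the interpolation and the required $\rho^{-(1+2s)}$; this absorption is handled separately in the regimes $\sqrt{t}\le\rho(x)$ and $\sqrt{t}>\rho(x)$.

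The main obstacle will be covering the full range $0<\eta\le 1$: the straightforward interpolation of (A) and (B) only yields $\eta\le s$, since bound (A) intrinsically carries the H\"older exponent $s$. For $\eta\in(s,1]$ the smoothness of $V$ must be used more carefully: at points where $V\sim\rho^{-2}$ one may differentiate $V^s$ directly, using $|\nabla V^s|=sV^{s-1}|\nabla V|\lesssim\rho^{-(1+2s)}$ to obtain a Lipschitz-type estimate, while the complementary regime (where $V$ is much smaller than $\rho^{-2}$) is controlled by (A) combined with the inequality $|x-y|\le\sqrt{t}$, the polynomial factor in $(1+\sqrt t/\rho(x))$ being used to reconcile the two sub-cases.
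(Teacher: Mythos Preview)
Your interpolation scheme has a genuine gap in the regime $\sqrt{t}\le\rho(x)$. There the factor $\bigl(1+\sqrt{t}/\rho(x)\bigr)^{2+4s}$ is comparable to $1$ and absorbs nothing, while your interpolated bound $(A)^\lambda(B)^{1-\lambda}$ with $\lambda=\eta/s$ gives $|x-y|^\eta\rho(x)^{-(\eta+2s)}$ and the target reads $|x-y|^\eta(\sqrt{t})^{1-\eta}\rho(x)^{-(1+2s)}$. Their ratio is $(\rho(x)/\sqrt{t})^{1-\eta}$, which is unbounded for $\eta<1$. So the interpolation of (A) and (B) does not reach the stated inequality even for $\eta\le s$; the problem is not only the range $\eta\in(s,1]$.

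The paper's argument is much more direct and avoids all of this. Since $|x-y|\le\sqrt{t}$ forces $(|x-y|/\sqrt{t})^\eta\ge(|x-y|/\sqrt{t})$ for every $0<\eta\le1$, it suffices to prove the case $\eta=1$. The paper applies the mean value theorem to $V^s$ itself: for some $x'$ on the segment $[x,y]$,
\[
\bigl|V^s(x)-V^s(y)\bigr|\lesssim V^{s-1}(x')\,|\nabla V(x')|\,|x-y|\lesssim \rho(x')^{-(1+2s)}|x-y|,
\]
and then transfers $\rho(x')$ to $\rho(x)$ via Lemma~\ref{Lem1: rho}(i), picking up the factor $(1+|x-y|/\rho(x))^{2+4s}\le(1+\sqrt{t}/\rho(x))^{2+4s}$. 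Rewriting $\rho(x)^{-(1+2s)}|x-y|=t^{-s}(|x-y|/\sqrt{t})(\sqrt{t}/\rho(x))^{1+2s}$ gives the claim. This is precisely the ``differentiate $V^s$ directly'' idea you tucked into your workaround for $\eta>s$; it should be the main line, not a sub-case. Your bound (A), coming from $|a^s-b^s|\le|a-b|^s$, throws away too much: it yields only $|x-y|^s\rho^{-3s}$, which in the regime $\sqrt{t}\le\rho(x)$ is weaker than the required $|x-y|\,\rho^{-(1+2s)}$ whenever $|x-y|\ll\rho(x)$.
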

\begin{proof}
	From the mean value theorem and part (i) of Lemma \ref{Lem1: rho}   we have, for some $x' \in B(x,|x-y|)$,  
	\begin{align*}
		\bigl|V^s(x)-V^s(y)\bigr| 
&\lesssim V^{s-1}(x') \bigl|\nabla V(x')\bigr| |x-y| \\
	&	\lesssim \rho(x')^{-1-2s}|x-y|  \\
		&\lesssim \rho(x)^{-1-2s} \Bigl(1+\f{|x-y|}{\rho(x)}\Bigr)^{2+4s} |x-y|
	\end{align*}
This yields the required result if  $|x-y|\le \sqrt{t}$.
\end{proof}

This smoothness grant us the following analogues of Proposition \ref{prop: VP est} and Lemma \ref{lem: EV} respectively.
\begin{Proposition}\label{prop: VP est2}
 Assume that $V$ satisfies \eqref{V cond1}. Then for each $0<s\le 1$, there exists $C>0$ such that the following holds for all $N>0$,
\begin{enumerate}[\upshape(a)]
	\item For every $x,y\in \RR{n}$, $t>0$,
	\begin{align*}
		\bigl| V^s(y) \,p_t(x,y)\bigr| 
		\le C t^{-s-\f{n}{2}} e^{-\f{|x-y|^2}{ct}}\Bigl(1+\f{\sqrt{t}}{\rho(y)}\Bigr)^{-N}.
\end{align*}
		\item For all $|y-y'|\le \sqrt{t}$ and any $0<\eta\le 1$ we have
	\begin{align*}
\bigl| V^s(y) \,p_t(x,y) - V^s(y') \,p_t(x,y')\bigr| 
\le C t^{-s-\f{n}{2}} e^{-\f{|x-y|^2}{ct}} \Bigl(\f{|y-y'|}{\sqrt{t}}\Bigr)^\eta
\end{align*}
	\end{enumerate}
\end{Proposition}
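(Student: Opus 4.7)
The strategy is to reduce both estimates to the heat kernel bounds of Proposition \ref{prop-kernelestimates} by exploiting the pointwise bound on $V$ that the hypothesis $V\in \RH_\infty$ together with \eqref{V cond1} yields via \eqref{V cond2}, namely $V(y)\le C\rho(y)^{-2}$, and consequently $V^s(y)\lesssim\rho(y)^{-2s}$.

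For part (a), I would apply \eqref{hk bound} directly to obtain
\begin{align*}
	V^s(y)\,|p_t(x,y)| \lesssim \rho(y)^{-2s}\,t^{-n/2}\, e^{-|x-y|^2/ct}\Bigl(1+\tfrac{\sqrt{t}}{\rho(y)}\Bigr)^{-N-2s}
\end{align*}
for any $N>0$. Writing $\rho(y)^{-2s}=t^{-s}(\sqrt{t}/\rho(y))^{2s}$ and absorbing this factor into the critical-radius decay yields the required bound.

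For part (b), I would split
\begin{align*}
	V^s(y)p_t(x,y)-V^s(y')p_t(x,y')
	&= V^s(y)\bigl[p_t(x,y)-p_t(x,y')\bigr] \\
	&\quad + \bigl[V^s(y)-V^s(y')\bigr]p_t(x,y').
\end{align*}
The first summand is handled exactly as in (a), but using the H\"older estimate \eqref{hk holder} in place of \eqref{hk bound}. The second summand is the new ingredient, and is estimated by combining Lemma \ref{lem: V lip} with the Gaussian bound \eqref{hk bound} applied to $p_t(x,y')$. Along the way, since $|y-y'|\le\sqrt{t}$, one converts $\rho(y')$ into $\rho(y)$ via Lemma \ref{Lem1: rho}(i) and replaces $e^{-|x-y'|^2/ct}$ by $e^{-|x-y|^2/c't}$ through a standard Gaussian manipulation; neither step introduces any difficulty beyond adjusting constants.

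The main bookkeeping issue in (b) is that Lemma \ref{lem: V lip} contributes the factor $(\sqrt{t}/\rho(y))^{1+2s}(1+\sqrt{t}/\rho(y))^{2+4s}$, which grows as $\sqrt{t}/\rho(y)\to\infty$, whereas the target estimate carries no critical-radius decay at all. The plan is to absorb this growth into the factor $(1+\sqrt{t}/\rho(y'))^{-M}$ inherited from the heat kernel in \eqref{hk bound}. Since $M>0$ is arbitrary, choosing $M$ sufficiently large relative to $s$ (more precisely, larger than $(3+6s)(1+k_0)$ to accommodate the possible distortion of $\rho$ coming from Lemma \ref{Lem1: rho}(i)) yields a uniform constant bound, which completes the proof.
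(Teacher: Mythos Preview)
Your proposal is correct and follows essentially the same route as the paper: the same pointwise bound $V^s(y)\lesssim\rho(y)^{-2s}$ for part (a), the same two-term splitting for part (b), and the same pairing of Lemma~\ref{lem: V lip} with the Gaussian bound for the second summand, with the growth in $\sqrt{t}/\rho$ absorbed by taking the decay exponent large enough. The only difference is that you are more explicit about converting between $\rho(y)$ and $\rho(y')$ and adjusting the Gaussian exponent, steps the paper leaves implicit.
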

\begin{proof}[Proof of Proposition \ref{prop: VP est2}]
	To prove (a) we observe that from the heat kernel bounds in Proposition \ref{prop-kernelestimates} and from \eqref{V cond2} that 
	\begin{align*}
		\bigl| V^s(y) \,p_t(x,y)\bigr| 
		&\lesssim t^{-s-\f{n}{2}} e^{-\f{|x-y|^2}{ct}}\Bigl(1+\f{\sqrt{t}}{\rho(y)}\Bigr)^{-N'} \Bigl(\f{\sqrt{t}}{\rho(y)}\Bigr)^{2s}
	\end{align*}
	The result now follows by taking $N'$ large enough. 
	
	For part (b) we write 
	\begin{align*}
		\bigl| V^s(y) \,p_t(x,y) - V^s(y') \,p_t(x,y')\bigr| 
		&\le  V^s(y) \bigl| p_t(x,y) - \,p_t(x,y')\bigr| 
		+\bigl| V^s(y) - V^s(y')\bigr| \bigl| \,p_t(x,y')\bigr| \\
		&=:I+II
	\end{align*}
	From the second estimate in Proposition \ref{prop-kernelestimates} and \eqref{V cond2} we have
	\begin{align*}
		I 
		\lesssim  \rho(y)^{-2s}  t^{-\f{n}{2}} e^{-\f{|x-y|^2}{ct}} \Bigl(\f{|y-y'|}{\sqrt{t}}\Bigr)^\eta \Bigl(1+\f{\sqrt{t}}{\rho(y)}\Bigr)^{-N'} 
		\lesssim t^{-s-\f{n}{2}} e^{-\f{|x-y|^2}{ct}} \Bigl(\f{|y-y'|}{\sqrt{t}}\Bigr)^\eta
	\end{align*}
	by taking $N'$ large enough. Next we have from the bounds on the heat kernel, that $|y-y'|\le \sqrt{t}$, and Lemma \ref{lem: V lip}, 
	\begin{align*}
		II 
		&\lesssim \bigl| V^s(y) - V^s(y')\bigr| \, t^{-\f{n}{2}} e^{-\f{|x-y|^2}{ct}}  \Bigl(1+\f{\sqrt{t}}{\rho(y')}\Bigr)^{-N'} \\
		&\lesssim   t^{-s-\f{n}{2}} e^{-\f{|x-y|^2}{ct}}\Bigl(\f{|y-y'|}{\sqrt{t}}\Bigr)^\eta \Bigl(\f{\sqrt{t}}{\rho(y')}\Bigr)^{1+2s}\Bigl(1+\f{\sqrt{t}}{\rho(y')}\Bigr)^{2+4s}
		 \Bigl(1+\f{\sqrt{t}}{\rho(y')}\Bigr)^{-N'} 
	\end{align*}
	which gives the required estimate after taking $N'$ large enough. 
\end{proof}
\begin{Lemma}\label{lem: VE}
	Suppose that $V$ satisfies \eqref{V cond1} and $0<s\le 1$. Then for any $0<\delta\le 2s\wedge 1$ and $0<\eta\le 1$ the following holds:
	\begin{align}\label{lem: VE1}
\bigl|V^s(x)e^{-tL}1(x)\bigr| \le C t^{-s} \Bigl(\f{\sqrt{t}}{\rho(x)}\Bigr)^\delta \Bigl(1+\f{\sqrt{t}}{\rho(x)}\Bigr)^{-N} 
\end{align}
for any $x\in\RR{n}$ and $t>0$, and
\begin{align}\label{lem: VE2}
\bigl|V^s(x)e^{-tL}1(x)-V^s(y)e^{-tL}1(y)\bigr| \le C t^{-s} \Bigl(\f{\sqrt{t}}{\rho(x)}\Bigr)^\delta \Bigl(1+\f{\sqrt{t}}{\rho(x)}\Bigr)^{-N} \Bigl(\f{|x-y|}{\sqrt{t}}\Bigr)^\eta
\end{align}
for all $t>0$ and $|x-y|\le\sqrt{t}$. 
	\end{Lemma}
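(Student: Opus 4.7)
The plan is to prove both estimates by direct computation, combining the heat kernel bounds of Proposition \ref{prop-kernelestimates}, the pointwise estimate $V(x)\le C\rho(x)^{-2}$ from \eqref{V cond2}, and the Lipschitz-type regularity of $V^s$ from Lemma \ref{lem: V lip}. The uniform device used throughout is: extract the required factor $(\sqrt{t}/\rho(x))^\delta$ from a stronger power of $\sqrt{t}/\rho(x)$ that naturally arises, and absorb any surplus polynomial factor of $(1+\sqrt{t}/\rho(x))$ into the Gaussian-type decay term $(1+\sqrt{t}/\rho(x))^{-N}$ by taking its exponent sufficiently large.

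For \eqref{lem: VE1}, write $V^s(x)e^{-tL}1(x) = V^s(x)\int p_t(x,w)\,dw$. Using the crude bound $(1+\sqrt{t}/\rho(x)+\sqrt{t}/\rho(w))^{-N'}\le (1+\sqrt{t}/\rho(x))^{-N'}$ and integrating the Gaussian in $w$ yields $|e^{-tL}1(x)|\lesi (1+\sqrt{t}/\rho(x))^{-N'}$ for any $N'>0$; combined with $V^s(x)\lesi \rho(x)^{-2s}=t^{-s}(\sqrt{t}/\rho(x))^{2s}$ this gives
\[
|V^s(x)e^{-tL}1(x)| \lesi t^{-s}(\sqrt{t}/\rho(x))^{2s}(1+\sqrt{t}/\rho(x))^{-N'}.
\]
Since $\delta\le 2s\wedge 1\le 2s$, I would split $(\sqrt{t}/\rho(x))^{2s}=(\sqrt{t}/\rho(x))^\delta\cdot(\sqrt{t}/\rho(x))^{2s-\delta}$, bound the second factor by $(1+\sqrt{t}/\rho(x))^{2s-\delta}$, and choose $N'$ sufficiently large.

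For \eqref{lem: VE2}, the natural decomposition is
\[
V^s(x)e^{-tL}1(x)-V^s(y)e^{-tL}1(y) = (V^s(x)-V^s(y))e^{-tL}1(x) + V^s(y)(e^{-tL}1(x)-e^{-tL}1(y)).
\]
The first summand is handled by Lemma \ref{lem: V lip} together with the pointwise bound on $e^{-tL}1(x)$ from the previous paragraph; this produces a factor $t^{-s}(|x-y|/\sqrt{t})^\eta(\sqrt{t}/\rho(x))^{1+2s}$ accompanied by polynomial growth in $(1+\sqrt{t}/\rho(x))$, and since $\delta\le 1\le 1+2s$ one peels off $(\sqrt{t}/\rho(x))^\delta$ as before. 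For the second summand, write $V^s(y)\lesi \rho(y)^{-2s}$ and apply the H\"older estimate for $e^{-tL}1$ obtained by integrating the second inequality of Proposition \ref{prop-kernelestimates} in $w$; crucially the hypothesis $V\in\RH_\infty$ yields $\sigma_0=2$, so any $\eta\in(0,1]$ is an admissible H\"older exponent. Finally, Lemma \ref{Lem1: rho}(i) provides $\rho(y)^{-2s}\lesi \rho(x)^{-2s}(1+|x-y|/\rho(x))^{2sk_0}\le \rho(x)^{-2s}(1+\sqrt{t}/\rho(x))^{2sk_0}$, and this additional polynomial factor is again absorbed into the decay by enlarging $N'$.

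The main obstacle is purely bookkeeping: Lemma \ref{lem: V lip} and the $\rho(y)$-to-$\rho(x)$ comparison both produce positive powers of $(1+\sqrt{t}/\rho(x))$ that need to be canceled against the chosen decay, but a single sufficiently large choice of $N'$ suppresses any fixed polynomial excess. No deeper idea is needed beyond the two structural inputs already in place: the Lipschitz-type regularity of $V^s$ (Lemma \ref{lem: V lip}) and the $\rho$-weighted Gaussian and H\"older estimates for $p_t$ (Proposition \ref{prop-kernelestimates}).
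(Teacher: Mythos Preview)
Your proposal is correct and follows essentially the same approach as the paper: the same heat-kernel bounds, the same pointwise bound $V^s\lesssim\rho^{-2s}$, the same decomposition of the difference into $(V^s(x)-V^s(y))e^{-tL}1(x)+V^s(y)(e^{-tL}1(x)-e^{-tL}1(y))$, and the same use of Lemma~\ref{lem: V lip} and Lemma~\ref{Lem1: rho}(i) to finish. Your explicit observation that $V\in\RH_\infty$ forces $\sigma_0=2$, so any $\eta\le 1$ is an admissible H\"older exponent in \eqref{hk holder}, makes precise a point the paper leaves implicit.
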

	\begin{proof}[Proof of Lemma \ref{lem: VE}]
		Firstly by \eqref{V cond2} and the bounds on the heat kernel, 
		\begin{align*}
			\bigl|V^s(x)e^{-tL}1(x)\bigr|  
			\lesssim \rho(x)^{-2s} \Bigl(1+\f{\sqrt{t}}{\rho(x)}\Bigr)^{-N'} 
		\end{align*}
		Thus \eqref{lem: VE1} follows by considering the cases $\sqrt{t}\ge \rho(x)$ and $\sqrt{t}<\rho(x)$ and taking suitable $N'$. Turning to \eqref{lem: VE2} we write
		\begin{align*}
\bigl|V^s(x)e^{-tL}1(x)-V^s(y)e^{-tL}1(y)\bigr| 
\le  \bigl|V^s(x)-V^s(y)\bigr| \bigl| e^{-tL}1(x)\bigr|
+ V^s(y)\bigl|e^{-tL}1(x)-e^{-tL}1(y)\bigr|
\end{align*}
Now from Lemma \ref{lem: V lip} we have
\begin{align*}
	\bigl|V^s(x)-V^s(y)\bigr| \bigl| e^{-tL}1(x)\bigr| 
	\lesssim  t^{-s}\Bigl(\f{|x-y|}{\sqrt{t}}\Bigr)^\eta \Bigl(\f{\sqrt{t}}{\rho(x)}\Bigr)^{1+2s}\Bigl(1+\f{\sqrt{t}}{\rho(x)}\Bigr)^{2+4s}\Bigl(1+\f{\sqrt{t}}{\rho(x)}\Bigr)^{-N'} 
\end{align*}
which gives the right hand side of \eqref{lem: VE2}. Next from \eqref{hk holder}, \eqref{V cond2}, and Lemma \ref{Lem1: rho} (i), 
\begin{align*}
	 V^s(y)\bigl|e^{-tL}1(x)-e^{-tL}1(y)\bigr|
	&\le V^s(y) \int \bigl|p_t(x,w)-p_t(y,w)\bigr| \,dw \\
	&\lesssim \rho(y)^{-2s} \Bigl(\f{|x-y|}{\sqrt{t}}\Bigr)^\eta\Bigl(1+\f{\sqrt{t}}{\rho(x)}\Bigr)^{-N'} \\
	&\lesssim \rho(x)^{-2s} \Bigl(1+\f{\sqrt{t}}{\rho(x)}\Bigr)^{4s}\Bigl(\f{|x-y|}{\sqrt{t}}\Bigr)^\eta\Bigl(1+\f{\sqrt{t}}{\rho(x)}\Bigr)^{-N'} 
\end{align*}
which also yields the right hand side of \eqref{lem: VE2}.
	\end{proof}

\begin{proof}[\bf Proof of Theorem \ref{thm6 appl4}]

We shall show that $T=L^{-s}V^s \in GCZO_\rho(\gamma,\theta)$ for any $1<\theta<\infty$ and $0<\gamma<1$. Note firstly that $V\in\RH_\infty$ implies that $L^{-s}V^s$ is bounded on $L^\theta$ for any $1<\theta<\infty$. Next we set
$$ K^*_s(x,y) = \f{1}{\Gamma(s)} \int_0^\infty p_t(x,y) \,V^s(y) \f{dt}{t^{1-s}} $$
to be the kernel of $L^{-s}V^s$. 

Let us show \eqref{cond1}. Fix a ball $B$ with $r_B\ge 2\rho_B$ and $y\in B$. Then we have $\rho(y)\sim \rho_B$. Thus from Proposition \ref{prop: VP est2}, for any $1<\theta<\infty$, 
\begin{align*}
	\bigl\Vert K^*_s(\cdot,y)\bigr\Vert_{L^\theta(2B\backslash B) } 
	\lesssim \int_0^\infty \bigl\Vert p_t(\cdot,y) V^s(y)\bigr\Vert_{L^\theta(2B\backslash B)}\f{dt}{t^{1-s}} 
	\lesssim \int_0^\infty e^{-c \f{r_B^2}{t}} t^{-1-\f{n}{2\theta'}} \bigl(1+\tfrac{\sqrt{t}}{\rho_B}\bigr)^{-N} \,dt 
\end{align*}
At this point we can continue as in \eqref{Ks est}. 

Let us turn to \eqref{cond2}. Now for each $1<\theta<\infty$ and $0<\gamma<1$ let us take $\epsilon = \f{1}{2}(\gamma+\f{n}{\theta'})$. Let $B$ be any ball and $y\in B$. Then  for each $k\ge 1$, , 
\begin{align*}
	\bigl\Vert K^*_s(\cdot,y) - K^*_s(\cdot,x_B) \bigr\Vert_{L^\theta(2^{k+1}B\backslash 2^kB) }  
	&\lesssim \int_0^\infty \bigl\Vert p_t(\cdot,y) V^s(y) - p_t(\cdot,x_B)V^s(x_B) \bigr\Vert_{L^\theta(2^{k+1}B\backslash 2^kB)} \f{dt}{t^{1-s}} \\
	&= \int_0^{r_B^2} \dots +\int_{r_B^2}^\infty \dots =: I+II
\end{align*}
We can apply Proposition \ref{prop: VP est2} (a) and proceed as in \eqref{Ks est2} to obtain
$$ I\lesssim 4^{-k\epsilon}r_B^{-\f{n}{\theta'}}$$
For the second term, Proposition \ref{prop: VP est2} (b) gives
\begin{align*}
	II \lesssim 	\int_{r_B^2}^\infty e^{-c4^k \f{r_B^2}{t}} t^{-1-\f{n}{2\theta'}} \Bigl(\f{|y-x_B|}{\sqrt{t}}\Bigr)\,dt 
	\lesssim 4^{-k\epsilon}r_B^{-\f{n}{\theta'}}
	\end{align*}
Combining our estimates for $I$ and $II$ gives \eqref{cond2} because $\gamma=2\epsilon -\f{n}{\theta'}$. 

Next we prove that $T^*=V^sL^{-s}$ satisfies (a) and (b) of Theorem \ref{Th: CZO}. As before this follows from the following version of \eqref{LV lip}: for each ball $B$ with $r_B\le \f{1}{2}\rho_B$, 
\begin{align} \label{VL lip}
	\bigl|V^sL^{-s} 1(x)- V^sL^{-s}1(y)\bigr| \lesssim \Bigl(\f{r_B}{\rho_B}\Bigr)^\delta
\end{align}
for any $x,y\in B$ and $0<\delta<2s\wedge 1$. We can obtain \eqref{VL lip} by arguing as in \eqref{LV lip}, but using Lemma \ref{lem: VE} in place of Lemma \ref{lem: EV}. 
\end{proof}

\bigskip

\noindent
{\bf Acknowledgments.} \  T.A. Bui  is supported by  ARC DP 140100649.


\begin{thebibliography}{999}

\bibitem{ADM}  P. Auscher, X.T. Duong, and A. McIntosh,
    \emph{Boundedness of Banach space valued singular integral
    operators and Hardy spaces}, unpublished preprint (2005).
    
    



\bibitem{BCFST} J.J. Betancor,  R. Crescimbeni, J.C. Farina, P.R. Stinga and J.L. Torrea,  \emph{A $T1$ criterion for Hermite--Calder\'on--Zygmund operators on the BMO$_H(\mathbb{R}^n)$ space and applications}, Ann. Sc. Norm. Super. Pisa Cl. Sci. (5) {\bf12} (2013), no. 1, 157--187.

\bibitem{BHS1} B. Bongioanni, E. Harboure, and O. Salinas, \emph{Weighted inequalities for negative powers of Schr\"odinger operators}, J. Math. Anal. Appl. \textbf{348} (2008), 12--27.

\bibitem{BHS} \underline{\hspace{1cm}}, \emph{Weighted inequalities for commutators of Schr\"odinger-Riesz transforms}, J. Math. Anal. Appl. {\bf392} (2012), no. 1, 6--22.



\bibitem{Cam} S. Campanato, \emph{Proprieta di holderianita di alcune classi di funzioni}, Ann. Scuola Norm. Sup. Pisa {\bf17} (1963),
175--188. 

\bibitem{DJ} G. David and J.-L. Journ\'e, \emph{A boundedness criterion for generalized Calder\'on--Zygmund operators}, Ann. of Math. {\bf120} (1984), 371Ð397.


\bibitem{DY} X.T. Duong  and  L.X. Yan,  \emph{Duality of  Hardy
    and {\rm BMO} spaces associated with operators with heat
    kernel bounds}, J.\ Amer.\ Math.\ Soc. {\bf 18} (2005),
    943--973.
    
\bibitem{DY2}  \underline{\hspace{1cm}},   \emph{New function spaces of BMO type, the John-Nirenberg inequality, interpolation, and applications}, Comm. Pure Appl. Math. {\bf58} (2005), no. 10, 1375--1420.    

\bibitem{DGMTZ} J. Dziuba\'nski, G. Garrig\'os, T. Mart\'inez, J. L. Torrea, and J. Zienkiewicz, \emph{BMO spaces related to Schr\"odinger operators with potentials satisfying a reverse H\"older inequality}, Math. Z. \textbf{249} (2005), no. 2, 329--356.


\bibitem{DZ1} J. Dziuba\'nski and J. Zienkiewicz, \emph{Hardy space $H^1$ associated to Schr\"odinger operator with potential satisfying reverse H\"older inequality}, Rev. Mat. Iberoamericana \textbf{15} (1999), no. 2, 279--296.

\bibitem{DZ2} \underline{\hspace{1cm}}, \emph{$H^p$ spaces for Schr\"odinger operators}, Fourier analysis and related topics (Bedlewo, 2000),  Banach Center Publ., vol. 56, Polish Acad. Sci., Warsaw, 2002, 45--53.


\bibitem{DZ3} \underline{\hspace{1cm}}, \emph{$H^p$ spaces associated with Schr\"odinger operators with potentials from reverse H\"older classes}, Colloq. Math. \textbf{98} (2003), no. 1, 5--38.

\bibitem{G} L. Grafakos, ``Classical and Modern Fourier Analysis'', Pearson Education, Inc., Upper Saddle River, NJ, 2004.

\bibitem{GLP} Z. Guo, P. Li and L. Peng, \emph{$L^p$ boundedness of commutators of Riesz transforms associated to Schr\"odinger operator},
J. Math. Anal. Appl. {\bf341} (2008), no. 1, 421--432. 

\bibitem{HHL} Y.S. Han, Y.C. Han and J. Li,  \emph{Criterion of the boundedness of singular integrals on
spaces of homogeneous type}, submitted.

\bibitem{HLMMY} S. Hofmann, G. Lu, D. Mitrea, M. Mitrea and L. Yan, \emph{Hardy spaces associated with non-negative self adjoint operators satisfying Davies--Gaffney estimates}, Mem. Amer. Math. Soc. \textbf{214} (2011), no. 1007. 



\bibitem{JY} R. Jiang and Da. Yang, \emph{Orlicz--Hardy spaces associated with operators satisfying Davies--Gaffney estimates}, Commn. Contemp. Math. \textbf{13} (2011), no.2,  331-373.

\bibitem{JYY} R. Jiang, Da. Yang and Do. Yang, \emph{Maximal function characterizations of Hardy spaces associated with magnetic Schršdinger operators}, Forum Math. {\bf24} (2012), no. 3, 471--494.

\bibitem{K} L.D. Ky, \emph{On weak$*$-convergence in $H^1_L(\mathbb{R}^d)$}, Potential Anal. {\bf39} (2013), no. 4, 355--368.

\bibitem{FKL} F. K. Ly, \emph{Second order Riesz transforms associated to the Schr\"odinger operator for $p\le 1$}, J. Math. Anal. App. \textbf{410} (2014), no. 1, 391-402.

\bibitem{FKL2} \underline{\hspace{1cm}}, \emph{Classes of weights and second order Riesz transforms associated to Schr\"odinger operators}, J. Math. Soc. Japan., to appear. 

\bibitem{MSTZ} T. Ma, P. R. Stinga, J. Torrea, and C. Zhang, \emph{Regularity estimates in H\"older spaces for Schr\"odinger operators via a $T1$ theorem}, Ann. Mat. Pura. Appl. (4) \textbf{193} (2014), no. 2, 561--589.

\bibitem{Sh} Z. Shen, \emph{$L^p$ estimates for Schr\"odinger operators with certain potentials},
Ann. Inst. Fourier (Grenoble) {\bf45} (1995), 513--546.

\bibitem{Sh2} \underline{\hspace{1cm}}, \emph{Estimates in $L^p$ for magnetic Schr\"odinger operators}, Indiana Univ. Math. J. \textbf{45} (1996), no. 3, 817--841.

\bibitem{Su} S. Sugano,\emph{ Estimates for the Operators $V^\alpha(-\Delta+V)^{-\beta}$ and $V^\alpha\nabla(-\Delta+V)^{-\beta}$ with Certain Non-negative Potentials $V$}, Tokyo J. Math. \textbf{21} (1998), no. 2, 441--452.


\bibitem{YY} D. Yang and S. Yang, \emph{Weighted local Orlicz Hardy spaces with applications to pseudo-differential operators},  Dissertationes Math. (Rozprawy Mat.) {\bf478} (2011), 78 pp.

\bibitem{YY1} \underline{\hspace{1cm}}, \emph{Local Hardy spaces of Musielak-Orlicz type and their applications}, Sci. China Math. {\bf55} (2012), no. 8, 1677--1720.

\bibitem{YYd} Da. Yang and Do. Yang, \emph{Hardy spaces associated with magnetic Schršdinger operators on strongly Lipschitz domains}, Nonlinear Anal. {\bf75} (2012), no. 18, 6433--6447.

\bibitem{YYd1} \underline{\hspace{1cm}}, \emph{Maximal function characterizations of Musielak-Orlicz-Hardy spaces associated with magnetic Schr\'odinger operators}, Front. Math. China {\bf10} (2015), no. 5, 1203--1232.

\bibitem{YYZ} Da. Yang, Do. Yang and Y. Zhou, \emph{Localized Morrey-Campanato spaces on metric measure
spaces and applications to Schr\"odinger operators},  Nagoya Math. J. {\bf198} (2010), 77--119.

\bibitem{YYZ1} \underline{\hspace{1cm}},  \emph{Localized BMO and BLO spaces on RD-spaces and applications to Schrödinger operators}, Commun. Pure Appl. Anal. {\bf9} (2010), no. 3, 779--812.

\bibitem{YZ} D. Yang and Y. Zhou, \emph{Localized Hardy spaces $H^1$ related to admissible functions on RD-spaces and applications to Schr\"odinger operators}, Trans. Amer. Math. Soc. {\bf363} (2011), 1197--1239.

\bibitem{Zh} J. Zhong, ``Harmonic analysis for some Schr\"odinger type operators'', Ph. D. Thesis, Princeton University, 1993.


\end{thebibliography}
\end{document}